\documentclass[3p, numbers]{elsarticle}
\usepackage{rsfso} %Mathematical script font wtith \mathcal
\usepackage{kpfonts} %Kepler fonts
\usepackage{amsthm}
\usepackage{mathtools}
\usepackage{bbm}
\usepackage{tikz-cd}
\usepackage{hyperref}

%%%%%% ENVIRONMENTS

\theoremstyle{plain}
\newtheorem{theorem}{Theorem}[section]
\newtheorem{corollary}{Corollary}[section]
\newtheorem{lemma}{Lemma}[section]
\newtheorem{claim}{Claim}[section]
\newtheorem{proposition}{Proposition}[section]

\theoremstyle{definition}
\newtheorem{definition}{Definition}[section]

\newtheorem{example}{Example}[section]
\newtheorem{problem}{Question}

\newtheorem{remark}{Remark}[section]

\numberwithin{equation}{section}

%%%% COMMANDS %%%%

%%%% LEQSLANT %%%%
\renewcommand{\leq}{\leqslant}
\renewcommand{\geq}{\geqslant}
\renewcommand{\preceq}{\preccurlyeq}

\newcommand{\iv}[1]{{#1}^{-1}}
\newcommand{\e}{\mathrm{e}}

%SHORTHANDS
\newcommand{\jd}{I}
\newcommand{\sst}{\subseteq}
\newcommand{\ksg}{\mathfrak{k}}
\newcommand{\Ksg}{\mathfrak{C}}
\newcommand{\Ideal}{\mathfrak{I}}
\newcommand{\p}{\mathfrak{p}}
\newcommand{\q}{\mathfrak{q}}
\newcommand{\m}{\mathfrak{m}}

%CATEGORIES

\renewcommand{\P}{{{\sf P}}}

\newcommand{\V}{{{\sf V}}}
\renewcommand{\L}{{{\sf L}}}
\newcommand{\Rep}{{{\sf R}}}
\newcommand{\A}{{{\sf A}}}

% SETS OF NUMBERS

\newcommand{\Z}{\mathbb{Z}}
\newcommand{\N}{\mathbb{N}}

%OPERATORS
\newcommand{\Bord}[2]{\mathscr{B}_{#2}\,({#1})}
\newcommand{\Pord}[2]{\mathscr{P}_{\,\,#2}\,({#1})}

\DeclareMathOperator{\RPord}{{\mathscr{P}}}
\DeclareMathOperator{\BOrd}{{\mathscr{B}}}
\DeclareMathOperator{\Rord}{{\mathscr{R}}}
\DeclareMathOperator{\Ord}{{\mathscr{O}}}
\newcommand{\Pa}{\mathbb{P}}

\newcommand{\Xa}{\mathbb{X}}
\DeclareMathOperator{\Aut}{Aut}

%SPECTRAL NOTIONS
\newcommand{\Connp}{{\rm Conv}_{\rm p}^*}
\newcommand{\Conn}{{\rm Conv}^*}
\DeclareMathOperator{\Con}{{\rm Conv}}
\DeclareMathOperator{\Conp}{{\rm Conv}_{\rm p}}
\DeclareMathOperator{\Spec}{{\rm Spec}}
\newcommand{\Specn}{{\rm Spec}^*}
\DeclareMathOperator{\Min}{{\rm Min}}
\DeclareMathOperator{\Qin}{{\rm Qin}}

\DeclareMathOperator{\Pol}{{\rm Pol}}
\DeclareMathOperator{\Polp}{{\rm Pol}_{\rm p}}
\newcommand{\Supp}{\mathbb{S}}
\newcommand{\Van}{\mathbb{V}}

%MAPS
\newcommand{\Free}[2]{F_{#2}\,({#1})}
\newcommand{\F}[1]{F_{#1}}

%Not Break Dash Command
\newcommand\nbd[1]{\protect\nobreakdash#1\hspace{0pt}}

\begin{document}
\begin{frontmatter}
\journal{ArXiv}
\title{Orders on groups, and spectral spaces of lattice-groups}
\author[ac]{Almudena~Colacito}
\ead{almudena.colacito@math.unibe.ch}
\author[vm]{Vincenzo~Marra}%\corref{cor}
\ead{vincenzo.marra@unimi.it}
\address[ac]{Mathematisches Institut, Universit\"at Bern, Alpeneggstrasse 22, 3012 Bern, Switzerland}
\address[vm]{Dipartimento di Matematica ``Federigo Enriques'', Universit\`a degli Studi di Milano, via Cesare Saldini 50, 20133 Milano, Italy}
%\cortext[cor]{Corresponding author}

\begin{abstract}Extending  pioneering work by Weinberg, Conrad, McCleary, and others, we provide a systematic way of relating  spaces of right orders on a partially ordered group, on the one hand, and  spectral spaces of  free lattice-ordered groups, on the other. The aim of the theory is to pave the way for further fruitful interactions between the study of right orders on groups and that of lattice-groups. Special attention is paid to the important case of orders on groups. 
\end{abstract}
\begin{keyword}Partially ordered group \sep Total order \sep Right order \sep Free lattice-ordered group, Spectral space, Stone duality.

\MSC[2010]{Primary: 06F15. Secondary: 06E15  \sep 03C05 \sep 08B15.}
\end{keyword}

\end{frontmatter}

\section{Introduction}\label{s:intro}
A \emph{right order} on a group $G$ is a total order $\leq$ on $G$ such that $x\leq y$ implies $xt\leq yt$, for all $x,y,t\in G$. There are at least two distinct motivations for studying such orders on groups. First, a countable group  admits a right order if, and only if, it acts faithfully on the real line by orientation-preserving homeomorphisms.  This, as far as we know, is folklore; see~\cite[Theorem 6.8]{Ghys2001} for a proof. (While we use right orders in this paper, other authors prefer left orders as in~\cite{Ghys2001}, the difference being immaterial.) The result indicates that orders on groups  play a r\^ole  in topological dynamics. For more on this, besides Section 6.5 of Ghys' beautiful survey~\cite{Ghys2001}, see the research monograph ~\cite{CR2016}. The reader can also consult~\cite{DehornoyEtAl2008}, and especially Chapter 14, for the important case of orders on braid groups. 

Second, the collection of all right orders on a group $G$ leads to a representation of  the free lattice\nbd{-}ordered group (or free \emph{$\ell$-group})  generated by~$G$. This classical theorem is due to Conrad~\cite{Conrad1970}  (see also~\cite{MR582096} for a generalisation), who extended a previous result of Weinberg~\cite{Weinberg1963} for the Abelian case. Right orders have since been central to the theory of $\ell$-groups; standard references for the latter are~\cite{BigardEtAl1977} and~\cite{Darnel95}. All lattice-group notions  mentioned in the rest of this introduction are defined in the subsequent sections.

In 2004, Sikora topologised the set of right orders on a group~\cite{Sikora2004}, and studied the resulting space;  see again~\cite{CR2016}.  In this paper we show, {\it inter alia}, that Sikora's space arises naturally from the study of lattice\nbd{-}groups, as the minimal spectrum of the $\ell$-group freely generated by the group at hand. More generally, by replacing right orders with right {\em pre}-orders---pre\nbd{-}orders that are invariant under group multiplication on the right---we provide a systematic, structural account of the relationship between (always total) right pre-orders on a group $G$ and prime subgroups of the $\ell$-group $\Free{G}{}$ freely generated by $G$. One  often restricts attention to special classes of right pre-orders on a group. For example, one may be interested in studying  \emph{orders} (sometimes called \emph{bi-orders}) on a group $G$, that is, total orders on $G$ that are invariant under group multiplication both on the right and on the left. In that case one  needs to replace $\Free{G}{}$ with the free \emph{representable} $\ell$-group generated by $G$. In full generality, fix an arbitrary variety (=equationally definable class) {\V} of $\ell$-groups,  a partially ordered group $G$, and write $\Free{G}{\V}$ for the free $\ell$-group generated by $G$ in $\V$. The results in this paper provide a theory of the connection between the prime subgroups  of $\Free{G}{\V}$, and a corresponding class of right pre-orders on $G$, where each pre-order in the class is required to  extend the partial order of $G$.

The connection we exhibit and study here has been previously identified in its basic form by McCleary in his paper on representations of free lattice-groups by ordered permutation groups, cf.\ \cite[Lemma 16]{McCleary1985}. There, McCleary considers a free group $G$ and constructs a bijection between right orders on $G$ and minimal prime subgroups of $\Free{G}{}$. This  paper may be viewed as a generalisation and extension of McCleary's result. Let us also mention~\cite{Clay2002}, where the  author acknowledges  McCleary's work as a source for his own correspondence  between closures of orbits (under the natural action of $G$) in the space of right orders on a group $G$, and kernels of certain maps from $\Free{G}{}$. 

In Section~\ref{s:main} we describe our construction and state our main correspondence result, Theorem~\ref{t:main}. We also state Corollary~\ref{c:top}, establishing that the spaces of right pre-orders corresponding to varieties of lattice\nbd{-}groups are always \emph{completely normal generalised spectral spaces}. Here and throughout the paper, `spectral space' is meant in the sense of Hochster~\cite{Hochster69}. \emph{Generalised} spectral spaces are the not necessarily compact versions of spectral spaces. Sections~\ref{s:orderiso} and~\ref{s:homeo} are devoted to a proof of Theorem~\ref{t:main}. In Section~\ref{s:spec} we use ideas from  Stone duality to obtain Corollary~\ref{c:top}.

The construction leading to Theorem~\ref{t:main} shows that to each variety of $\ell$-groups there remains associated a class of right pre-orders on groups.  The class corresponding to the variety of all $\ell$-groups is just that of all possible right pre-orders. In Section~\ref{s:var} we specialise our main result to the  varieties of representable and Abelian $\ell$-groups, thereby characterising the right pre-orders that correspond to these two varieties; see Theorems~\ref{t:reptrans2} and~\ref{t:reptrans3}.

In Theorem~\ref{t:minorders} we show that the usual spaces of right orders, orders, and orders on Abelian groups are recovered in our framework as  the subspaces of inclusion-minimal right pre-orders of the appropriate class in each case.  Through the bijection of Theorem~\ref{t:main}, such subspaces correspond precisely to minimal spectra of the free lattice-group over the given group in the appropriate variety (Corollary~\ref{c:minetag}). This leads us in Section~\ref{s:min} to revisit the much-studied minimal spectrum of an $\ell$-group, with special attention to its possible compactness. We obtain a general algebraic compactness criterion  in Corollary~\ref{c:cbapolars}. We also prove as Theorem~\ref{t:compactin3vars} that in the varieties all $\ell$-groups, of representable $\ell$-groups, and of Abelian $\ell$-groups, the free lattice-group generated by any partially ordered group has compact minimal spectrum. The spaces of right orders or orders on a partially ordered group $G$ are shown to be the dual Stone spaces of the Boolean algebra of principal polars of the $\ell$-group freely generated by $G$ in the appropriate variety; this is the content of Theorem~\ref{t:asbooleanspaces}, which also encompasses the analogous result for Abelian groups.
 
In the final Section~\ref{s:rep} we focus on orders on groups. We have seen in Section~\ref{s:var} how to specialise our main result (Theorem~\ref{t:main}) to any variety of $\ell$-groups, and thus in particular to the representable one. However, our last Theorem~\ref{t:main_rep} shows that,  in studying orders on a partially ordered group~$G$,   one really ought to look at \emph{prime ideals} (=normal prime subgroups) of the free representable $\ell$\nbd{-}group generated by $G$, as opposed to its prime subgroups. To rephrase on the algebraic side: in varieties of  representable $\ell$\nbd{-}groups the notion of prime subgroup---which is indispensable in the study of general $\ell$-groups---should be done away with and replaced by the notion of prime ideal.

\section{Main construction and result}\label{s:main}
The set of natural numbers is $\N\coloneqq\{\, 0,1,2,\dots \,\}$. Throughout, by `lattice' we mean `partially ordered set such that infima ($\wedge$) and suprema ($\vee$) of finite nonempty subsets exist'; thus, lattices do not necessarily have maxima or minima, and even when they do, lattice homomorphisms need not preserve them. By a {\em partially ordered group} we mean a group $G$ equipped with a partial order $\leq$ compatible with the group operation, that is, from $a \leq b$ we can conclude $sat \leq sbt$, for every $a, b, t, s \in G$. We write $\P$ for the category of partially ordered groups and their positive (equivalently, order-preserving) group homomorphisms. Recall that a positive group homomorphism is an {\em order embedding} if it is injective, and order reflecting on the range. The {\em positive cone} of an object $G$ of $\P$ is $G^+ := \{\, a \in G \mid a \geq \e \,\}$, where `$\e$' is our notation for the identity element of a group. A {\em lattice-ordered group} $H$ (briefly, \emph{$\ell$-group}) is a partially ordered group whose partial order is a lattice order. By `$\ell$-subgroup' we mean `sublattice subgroup of $H$'. Recall that the set of order-preserving bijections $\Aut{\Omega}$ of any chain (=totally ordered set) $\Omega$ can be made into an $\ell$-group, with group operation $f \cdot g$ standing for `$g$ after $f$', and point-wise lattice operations. 

A binary relation ${\preceq} \subseteq S \times S$ on a set $S$ is a {\em pre-order} if it is reflexive, transitive, and total. `Total' means: for any $a,b\in S$, either $a\preceq b$ or $b\preceq a$ (or both). We establish the convention, valid throughout the paper, that `pre-order' means `\emph{proper} pre-order'; i.e., we assume ${\preceq} \neq S\times S$. This will be consistent with the standard assumption that prime subgroups are proper.

If $\preceq$ is a pre-order on a group $G$, then $\preceq$ is {\em right invariant} (respectively, {\em left invariant}) if for all $a, b, t \in G$, whenever $a \preceq b$ then $at \preceq bt$ (respectively, $ta \preceq tb$). For a partially ordered group $G$, a {\em right pre-order on $G$} is a right-invariant pre-order on $G$ that extends the partial order on $G$; a {\em pre-order on $G$} is a left-invariant right pre-order on $G$. A \emph{proper} submonoid $C \subsetneq G$ is a  {\em pre-cone} of $G$ if $G = C \cup \iv{C}$ and $G^+ \sst C$. We set 
\[
\RPord{(G)} \coloneqq \{\, C \sst G \mid C\text{ is a pre-cone of }G \,\}.
\]

\noindent For a partially ordered group $G$, the set of right pre-orders on $G$ is a poset under inclusion, and similarly, $\RPord{(G)}$ is partially ordered by inclusion. It is elementary that these two posets are isomorphic via the map that associates to $C \in \RPord{(G)}$ the relation: $a \preceq_C b$ if, and only if, $b\iv{a} \in C$. The inverse of this bijection sends a right pre-order $\preceq$ to its {\em positive cone} $C \coloneqq \{\, a \in G \mid \e \preceq a \,\}$. This isomorphism restricts to one between pre\nbd{-}orders on $G$ and those pre-cones of $G$ that are {\em normal}, i.e., closed under group conjugation; we denote the subposet of such pre-cones by $\BOrd{(G)}$. In the rest of this paper we deliberately confuse a right pre-order $\preceq$ on $G$ with its associated pre-cone $C$. We write $a \prec b$ to mean $a \preceq b$ and $b \npreccurlyeq a$.

The following definition provides the natural extension to right pre-orders of a construction that is standard for right orders---and of course, \textit{mutatis mutandis}, for unordered groups.
 We write $G_C$ for the partially ordered group $G$ equipped with the right pre-order $C\supseteq G^+$. Then $C$ induces an equivalence relation $\equiv_C$ on $G$ by: $a \equiv_C b$ if, and only if, $a \preceq_C b$ and $b \preceq_C a$. 
We write $[a]$ for the equivalence class of $a \in G$, where $C$ is understood from context. 
The quotient set of $G$ modulo $\equiv_C$, which we denote $\Omega_C$, is totally ordered by: $[a] \leq_C [b]$ if, and only if, $a \preceq_C b$. It is elementary that the  map  
\begin{align}\nonumber
G &\xrightarrow{ \ \rho_{C} \ } \Aut{\Omega_C} 
\end{align}
defined by $\rho_C(a)([b])=[ba]$ for $a,b\in G$ is a positive group homomorphism. Its image $\rho_C[G]$ is usually not an $\ell$-subgroup of $\Aut{\Omega_C}$. 
\begin{definition}[Right regular representation]\label{def:rrr}
Let $G$ be a partially ordered group, and let $C\supseteq G^+$ be a right pre-order on $G$. We denote by $H_C$ the $\ell$\nbd{-}subgroup of $\Aut{\Omega_C}$ generated by $\rho_C[G]$. The map 
\begin{align}\label{eq:Rrr}
G &\xrightarrow{ \ R_{C} \ } H_C \\
a &\xmapsto{ \hspace{0.53cm} } R_C(a)\colon [b] \mapsto [ba]\nonumber
\end{align}
is called the \emph{right regular representation} of $G_C$.
\end{definition}

\begin{remark}
We consistently use the notation $H_C$ throughout the paper as in the previous definition.
\end{remark}

\begin{definition}[Right pre-orders of a variety]\label{def:vpre} For any partially ordered group $G$ and any variety $\V$ of $\ell$\nbd{-}groups, we write $\Pord{G}{\V}$ for the set of right pre-orders $C \in \RPord{(G)}$ such that $H_C \in \V$. Further, we write $\Bord{G}{\V}$ for the subset of $\Pord{G}{\V}$ consisting of pre-orders.  
\end{definition}

\noindent Thus, writing $\L$ for the variety of all $\ell$-groups, $\Pord{G}{\L} = \RPord{(G)}$. Observe that~$\Pord{G}{\V}$ may well be empty even when $G$ is non-trivial.

We identify any variety $\V$ of $\ell$-groups with the full subcategory of the category of $\ell$-groups whose objects are the $\ell$-groups in $\V$.
Let us write $P \colon \V \longrightarrow \P$
for the faithful functor that takes an $\ell$-group $H$ in  $\V$ to $H$ itself regarded as a partially ordered group. By general considerations $P$ has a left adjoint $\F{\V} \colon \P\longrightarrow \V$, in symbols, $\F{\V}\dashv P$. For $G$ a partially ordered group, $\Free{G}{\V}$ is the $\ell$-group \emph{free over $G$ in $\V$}, or \emph{freely generated  by $G$ in $\V$}.
The component at $G$ of the unit of the adjunction $\F{\V} \dashv P$, written
\begin{equation}\label{eq:unitadj}
G \xrightarrow{\ \eta \ } \Free{G}{\V},
\end{equation}
is characterised by the following universal property:
{\it For each  positive homomorphism $p\colon G\to H$, with $H$ an $\ell$-group in $\V$, there is exactly one $\ell$\nbd{-}homomorphism $h\colon \Free{G}{\V}\to H$ such that $h\circ \eta=p$, i.e., such that the following diagram 
\begin{equation}\label{eq:universal}
\begin{tikzcd}
G \arrow[rd, "p"'] \arrow[r, "\eta"] & \Free{G}{\V} \arrow[d, dashed,  "h", "!"']\\
& H
\end{tikzcd}
\end{equation}
commutes}. We write $\Free{G}{}$ for $\Free{G}{\L}$. (Here and elsewhere we adopt the style common in algebra of omitting forgetful functors---$P$, for the case in point---unless clarity requires otherwise. Also, as in the above, we write $\eta$ in place of $\eta_G$ for the component of the unit, $G$ being understood.)
\begin{remark}
Bigard, Keimel, and Wolfenstein~\cite[Appendice A.2]{BigardEtAl1977}, and similarly, Conrad~\cite{Conrad1970}, distinguish between the \emph{universal} $\ell$-group on $G$, and the free such $\ell$-group, which in their terminology has the further property that the universal arrow $\eta \colon G\to F(G)$ in \eqref{eq:unitadj} is an order embedding. We do not follow their distinction, and speak of free objects in all cases.
\end{remark} 
If $H$ is any $\ell$-group, set
\begin{align}\nonumber
\Spec{H}\coloneqq \{\, \p \sst H\mid \p \text{ is a prime convex $\ell$-subgroup of } H \,\}.
\end{align}
Here, a {\em convex $\ell$-subgroup} $\ksg$ is a sublattice subgroup of $H$ that is order convex, and an {\em ideal} is a normal convex $\ell$-subgroup. For any convex $\ell$\nbd{-}subgroup~$\ksg$, the set of cosets $H/\ksg$ can be lattice ordered by: $\ksg x \leq \ksg y$ if, and only if, $x \leq ty$ in $H$ for some $t \in \ksg$. A convex $\ell$-subgroup $\p$ is \emph{prime} just when it is proper (i.e., $\p\neq H$), and the quotient lattice $H/\p$ is ordered. Throughout, we write `prime subgroup' to mean `prime convex $\ell$-subgroup', as usual. We denote by $\Specn{H}$ the subset of $\Spec{H}$ consisting of prime ideals.

We define a map 
\begin{equation}\label{eq:k1}
\kappa \colon \Pord{G}{\V} \longrightarrow \Spec \Free{G}{\V}
\end{equation}
as follows. Given a right pre-order $C \in \Pord{G}{\V}$, write 
\begin{equation}\nonumber
h_C \colon \Free{G}{\V} \longrightarrow H_C 
\end{equation}
for the unique $\ell$-homomorphism such that $h_C \circ \eta = R_C$. Write $H_C[\e]$ for the {\em stabiliser of $[\e]$}, that is, the set of $f \in H_C$ such that $f([\e]) = [\e]$. It is known that~$H_C[\e]$ is a prime subgroup of $H_C$ (see, e.g.,~\cite[Section 1.5]{Glass81}), and therefore $\iv{h_C}(H_C[\e])$ is a prime subgroup of $\Free{G}{\V}$. Hence, we set 
\begin{equation}\label{eq:k2}\nonumber
\kappa(C) \coloneqq \iv{h_C}(H_C[\e]) \in \Spec{\Free{G}{\V}},
\end{equation}
to complete the definition of $\kappa$ in \eqref{eq:k1}.

We define a map 
\begin{equation}\label{eq:p1}
\pi \colon \Spec \Free{G}{\V} \longrightarrow \Pord{G}{\V} 
\end{equation}
as follows. Given $\p \in \Spec \Free{G}{\V}$, we define the relation $\preceq_{\p}$ on $G$ by: 
\begin{equation}\label{eq:cp}
a \preceq_{\p} b\ \ \text{ if, and only if, }\ \ \p\eta(a) \leq \p\eta(b).
\end{equation} 
We shall prove in Lemma~\ref{l:welldefpi} below that $\preceq_{\p} \in \Pord{G}{\V}$. We write $C_\p$ for the positive cone of $\preceq_\p$. Finally, set
\begin{equation}\label{eq:p2}\nonumber
\pi(\p) := C_\p,
\end{equation}
to complete the definition of $\pi$ in \eqref{eq:p1}.

We topologise $\Spec{H}$ using the {\em spectral} (or {\em Zariski}) topology whose open sets are those of the form
\[
\Supp{(A)} = \{\, \p \in \Spec{H} \mid A \not\sst \p \,\},
\]
as $A$ ranges over arbitrary subsets of $H$~\cite[Proposition 49.6]{Darnel95}. Thus, the closed sets are those of the form 
\[
\Van{(A)} = \{\, \p \in \Spec{H} \mid A \sst \p \,\}.
\]
We also topologise $\Specn{H}$ by the subspace topology, with opens $\Supp^*{(A)}$ for $A \sst H$.
\noindent Further, we set
\begin{equation}\nonumber
\Pa{(a)} \coloneqq \{\, C \in \Pord{G}{\V} \mid a \in C \text{ and }\iv{a} \not \in C \,\}, \quad \text{ for }a \in G.
\end{equation} 
We endow $\Pord{G}{\V}$ with the smallest topology containing all sets $\Pa{(a)}$, and $\Bord{G}{\V}$ with the subspace topology. 

\begin{theorem}\label{t:main}
For any partially ordered group $G$ and any variety $\V$ of $\ell$-groups, the maps $\kappa \colon \Pord{G}{\V} \to \Spec{\Free{G}{\V}}$ and $\pi \colon \Spec{\Free{G}{\V}} \to \Pord{G}{\V}$ in \eqref{eq:k1} and \eqref{eq:p1} are mutually inverse, inclusion-preserving homeomorphisms that restrict to homeomorphisms between $\Bord{G}{\V}$ and $\Specn{\Free{G}{\V}}$.
\end{theorem}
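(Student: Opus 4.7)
My plan is to prove the theorem in four steps: (a) that $\kappa$ and $\pi$ are mutually inverse, (b) that both maps preserve inclusion, (c) that both are continuous (hence homeomorphisms), and (d) that the restriction to $\Bord{G}{\V}$ and $\Specn\Free{G}{\V}$ is a homeomorphism. Throughout I rely on three elementary facts: $h_C$ is surjective because its image contains the $\ell$-generators $R_C[G]$ of $H_C$; for any prime $\p$ of $\Free{G}{\V}$, the natural quotient $\Free{G}{\V} \to \Free{G}{\V}/\p$ is a lattice homomorphism onto a chain; and every $f \in \Free{G}{\V}$ admits a disjunctive normal form $f = \bigvee_i \bigwedge_j \eta(g_{ij})$, valid in any variety since $\eta[G]$ is closed under the group operation.

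For $\pi \circ \kappa = \mathrm{id}$: surjectivity of $h_C$ makes the assignment $\kappa(C)\eta(a) \mapsto H_C[\e]R_C(a)$ an order-preserving bijection between the chains $\Free{G}{\V}/\kappa(C)$ and $H_C/H_C[\e]$, hence an order isomorphism. Composing with the natural identification $H_C/H_C[\e] \cong \Omega_C$ given by $H_C[\e]f \mapsto f([\e])$ yields $\kappa(C)\eta(a) \mapsto [a]$, from which $\preceq_{\kappa(C)} = \preceq_C$. For $\kappa \circ \pi = \mathrm{id}$: right multiplication defines an $\ell$-homomorphism $\phi \colon \Free{G}{\V} \to \Aut(\Free{G}{\V}/\p)$ sending $x$ to $\p y \mapsto \p(yx)$, whose image is the $\ell$-subgroup $K$ generated by $\phi\eta[G]$. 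The subset $X = \{\, \p\eta(a) \mid a \in G \,\}$ is $K$-invariant (closure of $\phi\eta[G]$ under group and lattice operations) and identifies with $\Omega_{C_\p}$ via $\p\eta(a) \leftrightarrow [a]$. Restriction to $X$ gives a surjective $\ell$-homomorphism $r \colon K \to H_{C_\p}$, and the universal property forces $r \circ \phi = h_{C_\p}$. Hence $\kappa(C_\p) = h_{C_\p}^{-1}(H_{C_\p}[\e]) = \phi^{-1}(\mathrm{stab}_K(\p)) = \{\, x \mid \p x = \p \,\} = \p$.

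For inclusion-preservation and continuity: convexity and primality (via $\eta(a)^+ \wedge \eta(a^{-1})^+ = \e$) give $a \in C_\p \iff \eta(a^{-1})^+ \in \p$ and $C_\p \in \Pa(a) \iff \eta(a)^+ \notin \p$. The first makes $\pi$ manifestly inclusion-preserving; the second gives $\pi^{-1}(\Pa(a)) = \Supp(\{\eta(a)^+\})$, so $\pi$ is continuous. For inclusion-preservation of $\kappa$: when $C_1 \sst C_2$, the natural order-preserving, $G$-equivariant quotient $\sigma \colon \Omega_{C_1} \to \Omega_{C_2}$ extends, by induction on $\ell$-term structure (using that $\sigma$ preserves maxima and minima between chains), to intertwine the $H_{C_i}$-actions, whence $\kappa(C_1) \sst \kappa(C_2)$. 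For continuity of $\kappa$: after reducing via $\Supp(\{f\}) = \Supp(\{|f|\})$ to $f \geq \e$, the disjunctive normal form combined with the quotient lattice homomorphism yields $\kappa^{-1}(\Supp(\{f\})) = \bigcup_i \bigcap_j \Pa(g_{ij})$, a finite union of finite intersections of subbasic opens.

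For the restriction: if $C$ is a pre-order then $\Omega_C$ is a totally ordered group, and since joins and meets of right translations in a chain are themselves right translations, $H_C$ reduces to $R_C[G]$ viewed as a totally ordered group; so $H_C[\e]$ is trivial and $\kappa(C) = \ker h_C$ is an ideal. Conversely, if $\p$ is normal then $\Free{G}{\V}/\p$ is an $\ell$-group, left multiplication preserves its order, and $C_\p$ is left-invariant. The main obstacle I expect is the $\kappa \circ \pi = \mathrm{id}$ identity, which demands the careful identification of $X$ with $\Omega_{C_\p}$ together with the factorization $h_{C_\p} = r \circ \phi$ through the universal property; once this is in place, everything else reduces to manageable $\ell$-group bookkeeping and the normal-form computation.
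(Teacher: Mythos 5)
Your proposal is correct and follows the same overall decomposition as the paper (mutual inverses, inclusion\nbd{-}preservation, continuity in both directions, restriction to pre-orders and prime ideals), but several of your local arguments are genuinely different and in places more structural. For $\pi\circ\kappa=\mathrm{id}$ the paper computes element-wise with $\eta(a)\wedge\e$ and the stabiliser, whereas you compose the coset bijection induced by the surjection $h_C$ with the orbit--stabiliser order-isomorphism $H_C/H_C[\e]\cong\Omega_C$; for $\kappa\circ\pi=\mathrm{id}$ your factorisation $h_{C_\p}=r\circ\phi$ is essentially the paper's identification of $R_\p[\eta[G]]$ with $R_{C_\p}[G]$ via $\Omega_{C_\p}\cong\Omega_\p$ (Claim~\ref{cl:chain} is what makes your set $X$ all of $\Free{G}{\V}/\p$, though your argument only needs $X$ to be $K$-invariant). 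For inclusion-preservation of $\kappa$ the paper reindexes a conjunctive normal form inside the chain $\Omega_C$; your $G$-equivariant quotient $\sigma\colon\Omega_{C_1}\to\Omega_{C_2}$, which intertwines the term functions because order-preserving maps between chains preserve finite maxima and minima, is cleaner and avoids that combinatorics. Your identities $a\in C_\p\Leftrightarrow\eta(\iv{a})\vee\e\in\p$ and $C_\p\in\Pa{(a)}\Leftrightarrow\eta(a)\vee\e\notin\p$ (the latter via primality and $(\eta(a)\vee\e)\wedge(\eta(\iv{a})\vee\e)=\e$) recover the paper's key equation \eqref{eq:kpa} by pure convexity/primality rather than by acting on $\Omega_C$, and they give inclusion-preservation and continuity of $\pi$ in one stroke. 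One point you must still make explicit: the theorem asserts that $\pi$ takes values in $\Pord{G}{\V}$, so you need to check that $C_\p$ is a \emph{proper} right pre-order extending $G^{+}$ and, crucially, that $H_{C_\p}\in\V$ (the paper's Lemma~\ref{l:welldefpi}). Both follow from machinery you already have---$H_{C_\p}=r[K]$ is a homomorphic image of $\Free{G}{\V}\in\V$, and $\eta[G]\sst\p$ would force $\p=\Free{G}{\V}$ by Lemma~\ref{l:unitproperties}---but neither is currently stated, and without the $\V$-membership the codomain of $\pi$ is wrong.
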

\begin{remark}Definition~\ref{def:vpre} associates a class of right pre-orders on groups to any given variety $\V$ of $\ell$\nbd-groups, namely, $\Pord{G}{\V}$ as $G$ ranges over all groups; Theorem~\ref{t:main} establishes a non-trivial property of this association. We do not address in this paper the question of how to obtain a syntactic characterisation of the class of right pre-orders associated in this manner to a variety $\V$. For a more precise formulation of this problem, cf.\  Question~\ref{q:synt} below.  The construction leading to the statement of Theorem~\ref{t:main} makes it clear that one can also invert the correspondence: a class of right pre-orders on (a class of) groups uniquely determines a variety $\V$ of  $\ell$-groups. Again, a deeper investigation of this inverse correspondence is left to further research.
\end{remark}

Let $X$ be a topological space. A closed set $\emptyset \neq Y \sst X$ is {\em irreducible} if it is not the union of two proper closed subsets of itself. If every such $Y$ is the closure of a unique point, the space $X$ is called {\em sober}. A {\em generalised spectral} space is a sober space whose compact open subsets form a base closed under finite intersections. A {\em spectral} space is a generalised spectral space that is also compact~\cite{Hochster69}. A generalised spectral space $X$ is {\em completely normal}~\cite[Chapitre 10]{BigardEtAl1977} if for any $x, y \in X$ in the
closure of a singleton $\{\, z \,\}$, either $x$ is in the closure of $\{\, y \,\}$, or $y$ is in the closure of $\{\, x \,\}$. The {\em specialisation order} of a topological space $X$ is the relation defined on $X$ by: $x \preceq y$ if, and only if, $y$ is in the closure of $\{\, x \,\}$. 

\begin{corollary}\label{c:top}
For any partially ordered group $G$ and any variety $\V$ of $\ell$-groups, the space $\Pord{G}{\V}$ is a completely normal generalised spectral space whose specialisation order coincides with the inclusion order.
\end{corollary}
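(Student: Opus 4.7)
The plan is to transfer the assertion along the homeomorphism established in Theorem~\ref{t:main}. Since $\kappa$ and $\pi$ are mutually inverse, inclusion-preserving homeomorphisms, every topological property and the specialisation order transfer verbatim between $\Pord{G}{\V}$ and $\Spec{H}$, where $H \coloneqq \Free{G}{\V}$. It therefore suffices to prove that $\Spec{H}$ is a completely normal generalised spectral space whose specialisation order equals inclusion.

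The specialisation order on $\Spec{H}$ comes first. The closure of $\{\p_0\}$ is the smallest closed set of the form $\Van{(A)}$ containing $\p_0$; any such set forces $A \sst \p_0$, whereas $\Van{(\p_0)} = \{\, \p \mid \p_0 \sst \p \,\}$ is itself closed and contains $\p_0$. Hence $\Van{(\p_0)}$ is the closure of $\{\p_0\}$, so specialisation on $\Spec{H}$ is inclusion. Since $\kappa$ is a homeomorphism that preserves inclusion in both directions, the specialisation order on $\Pord{G}{\V}$ is also inclusion.

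Next I would verify the generalised spectral structure of $\Spec{H}$. The sets $\Supp{(a)} \coloneqq \Supp{(\{a\})}$ for $a \in H$ form an open base, since $\Supp{(A)} = \bigcup_{a \in A}\Supp{(a)}$. Using the characterisation of primeness (equivalently, that $H/\p$ is totally ordered as a lattice), one derives $\Supp{(a)} \cap \Supp{(b)} = \Supp{(|a| \wedge |b|)}$, so the base is closed under finite intersections. Each $\Supp{(a)}$ is compact, because the convex $\ell$-subgroup generated by a set $S \sst H$ is the directed union of those generated by finite subsets of $S$, together with the fact that $\p \in \Van{(S)}$ iff $\p$ contains the convex $\ell$-subgroup generated by $S$. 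Sobriety of $\Spec{H}$ follows by identifying irreducible closed subsets with prime convex $\ell$-subgroups, each serving as the unique generic point of its closure.

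Complete normality of $\Spec{H}$ reduces, under the specialisation-order identification, to the requirement that $\{\, \p \in \Spec{H} \mid \p \supseteq \p_0 \,\}$ be totally ordered by inclusion for every $\p_0 \in \Spec{H}$; this is a classical property of prime convex $\ell$-subgroups, proved in~\cite[Chapitre 10]{BigardEtAl1977}. The main obstacle is likely to be the compactness step, which hinges on a Nullstellensatz-style description of closed sets via convex $\ell$-subgroup closure; alternatively, one can bypass it altogether by invoking Stone duality for the distributive lattice of finitely generated convex $\ell$-subgroups of $H$, the route signalled by Section~\ref{s:spec}.
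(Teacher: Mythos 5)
Your proposal is correct and follows essentially the same route as the paper: transfer everything along the homeomorphism of Theorem~\ref{t:main}, identify the specialisation order on $\Spec{\Free{G}{\V}}$ with inclusion via $\Van{(\p_0)} = \overline{\{\p_0\}}$, obtain generalised spectrality of the spectrum (the paper does this precisely by the Stone-duality argument for $\Conp{\Free{G}{\V}}$ that you flag as the alternative), and deduce complete normality from the classical root-system property of prime subgroups.
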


\section{Order-isomorphism}\label{s:orderiso}

\begin{lemma}\label{l:unitproperties}
For any partially ordered group $G$ and any variety $\V$ of $\ell$-groups, the image $\eta[G]\sst \Free{G}{\V}$ of $G$ under $\eta$ generates $\Free{G}{\V}$ as a lattice.
\end{lemma}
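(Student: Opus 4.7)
The plan is to decompose the lemma into two well-known facts: first, that $\eta[G]$ generates $\Free{G}{\V}$ as an $\ell$-group; second, that for any subgroup $S$ of any $\ell$-group $H$, the sublattice of $H$ generated by $S$ already coincides with the $\ell$-subgroup of $H$ generated by $S$. Applied to $S=\eta[G]$, which is a subgroup of $\Free{G}{\V}$ because $\eta$ is a group homomorphism, the combination of the two yields the statement.

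For the first fact I would invoke the universal property of $\Free{G}{\V}$ in a standard way. Let $L$ be the $\ell$-subgroup of $\Free{G}{\V}$ generated by $\eta[G]$. Since the variety $\V$ is closed under $\ell$-subalgebras, $L \in \V$. Regarding $\eta$ as a positive group homomorphism $\eta_{0}\colon G \to L$ via co-restriction, the universal property supplies a unique $\ell$-homomorphism $h\colon \Free{G}{\V} \to L$ with $h \circ \eta = \eta_{0}$. Post-composing $h$ with the inclusion $L \hookrightarrow \Free{G}{\V}$ yields an $\ell$-endomorphism of $\Free{G}{\V}$ that still commutes with $\eta$; by the uniqueness clause in \eqref{eq:universal} this endomorphism must be the identity, whence $L = \Free{G}{\V}$.

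For the second fact I would argue that the sublattice $M \sst H$ generated by $S$ is automatically closed under group multiplication and inversion. Every element of $M$ admits a representation as a finite meet of finite joins of elements of $S$. The $\ell$-group identities
\[
x(a \vee b)y = xay \vee xby,\quad x(a \wedge b)y = xay \wedge xby,\quad (a \vee b)^{-1} = a^{-1} \wedge b^{-1},\quad (a \wedge b)^{-1} = a^{-1} \vee b^{-1}
\]
allow one to push any outer product or inverse past the lattice operations. Iterating reduces an outer inverse, or a product of two such expressions, to a lattice combination of products of elements of $S$; because $S$ is a subgroup, those products again lie in $S \sst M$. Hence $M$ is closed under group operations, and therefore $M$ equals the $\ell$-subgroup of $H$ generated by $S$.

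The proof is almost entirely formal once these two ingredients are in place. The only mild obstacle is the bookkeeping needed to make the ``push inside'' step into a proper induction; carrying it out one would induct on the depth of the lattice expression representing an element of $M$, using the displayed identities to reduce an outer group operation to a finite lattice combination of products lying in $S$. Applied to $S = \eta[G] \sst \Free{G}{\V}$, the two facts immediately give that $\eta[G]$ generates $\Free{G}{\V}$ as a lattice.
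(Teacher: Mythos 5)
Your proposal is correct and follows essentially the same route as the paper: first use the universal property (uniqueness of the mediating map) to see that $\eta[G]$ generates $\Free{G}{\V}$ as an $\ell$-group, then observe that since $\eta[G]$ is a subgroup and the group operations distribute over the lattice operations, the sublattice it generates is already an $\ell$-subgroup. The paper merely compresses your second step into the remark that this is ``elementary''; your spelled-out induction is the same argument.
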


\begin{proof}
Write $\widehat{G}$ for the $\ell$-subgroup of $\Free{G}{\V}$ generated by $\eta[G]$. Then the positive group homomorphism $G\to \widehat{G}$ that agrees with $\eta$ on $G$ enjoys the universal property of $\eta$ because any $\ell$-homomorphism out of $\widehat{G}$ is uniquely determined by its action on any generating set of $\widehat{G}$. It follows by a standard argument on the uniqueness of universal arrows that $\widehat{G}=\Free{G}{\V}$. Since $\eta$ is a group homomorphism, $\eta[G]$ is a subgroup of $G$, and therefore must generate $\Free{G}{\V}$ as a lattice---it is elementary that in any $\ell$-group the lattice is distributive and the group operation distributes over meets and joins.
\end{proof}

If $H$ is an $\ell$-group and $\p \in \Spec{H}$, then the map  
\begin{align}\label{eq:Rrrc}
H &\xrightarrow{ \ R_\p \ } \Aut{H/\p} \\
x &\xmapsto{ \hspace{0.5cm} } R_\p(x)\colon \p y \mapsto \p yx\nonumber
\end{align}
is an $\ell$-homomorphism~\cite[Proposition 29.1]{Darnel95}. Note that $H/\p$ is naturally a totally ordered group if, and only if, $\p \in \Specn{H}$, and $R_\p[H]$ is isomorphic as an $\ell$-group to $H/\p$.

If $\p \in \Spec{\Free{G}{\V}}$, we write $\Omega_\p$ for the chain $\Free{G}{\V}/\p$.

\begin{lemma}\label{l:welldefpi}
For any partially ordered group $G$ and any variety $\V$ of $\ell$-groups, the map $\pi$ is a well-defined function with values in $\Pord{G}{\V}$.
\end{lemma}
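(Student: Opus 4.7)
The plan is to verify three things about the relation $\preceq_\mathfrak{p}$ defined in \eqref{eq:cp}: (a) it is a right pre-order on $G$ extending the given partial order; (b) it is proper; and (c) the associated $\ell$-group $H_{C_\mathfrak{p}}$ lies in $\V$.

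Items (a) and (b) should be essentially formal. Writing $\Omega_\mathfrak{p} \coloneqq \Free{G}{\V}/\mathfrak{p}$, reflexivity, transitivity and totality of $\preceq_\mathfrak{p}$ are inherited from the linear order on the chain $\Omega_\mathfrak{p}$ by pull-back along $\mathfrak{p} \circ \eta$. Right invariance then uses that $R_\mathfrak{p}(\eta(t))$ from \eqref{eq:Rrrc} is an order-automorphism of $\Omega_\mathfrak{p}$, combined with the fact that $\eta$ is a group homomorphism. That $\preceq_\mathfrak{p}$ extends the given partial order of $G$ is immediate from $\eta$ being a positive homomorphism. For (b), if $\preceq_\mathfrak{p}$ were equal to $G\times G$ then $\mathfrak{p}\eta(a) = \mathfrak{p}$ for every $a\in G$, so $\eta[G]\sst \mathfrak{p}$; but Lemma~\ref{l:unitproperties} would then force $\mathfrak{p} = \Free{G}{\V}$, contradicting the standing convention that prime subgroups are proper.

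The main step is (c), and I expect to handle it by exhibiting $H_{C_\mathfrak{p}}$ as an $\ell$-homomorphic image of $R_\mathfrak{p}[\Free{G}{\V}] \sst \Aut{\Omega_\mathfrak{p}}$. Since $R_\mathfrak{p}$ is an $\ell$-homomorphism, $R_\mathfrak{p}[\Free{G}{\V}]$ lies in $\V$ (homomorphic image of a free object in the variety), and $\V$ is closed under homomorphic images, so the conclusion will follow. The connecting $\ell$-homomorphism should be induced by the map
\[
\phi\colon \Omega_{C_\mathfrak{p}} \longrightarrow \Omega_\mathfrak{p}, \qquad [a] \longmapsto \mathfrak{p}\eta(a),
\]
which is a well-defined order-embedding of chains by the very definition of $\equiv_{C_\mathfrak{p}}$ and \eqref{eq:cp}. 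The key observation will be that each generator $R_\mathfrak{p}(\eta(a))$ of the $\ell$-subgroup $R_\mathfrak{p}[\Free{G}{\V}]$ maps $\phi(\Omega_{C_\mathfrak{p}})$ into itself via $\mathfrak{p}\eta(b)\mapsto \mathfrak{p}\eta(ba)$, so restriction along $\phi$ realises $R_\mathfrak{p}(\eta(a))$ as $\rho_{C_\mathfrak{p}}(a)\in \Aut{\Omega_{C_\mathfrak{p}}}$. Because $\Omega_\mathfrak{p}$ is a chain, this set-preservation property descends through group inverses and through the pointwise meets and joins inherited from $\Aut{\Omega_\mathfrak{p}}$. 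Hence the full $\ell$-subgroup $R_\mathfrak{p}[\Free{G}{\V}]$ preserves $\phi(\Omega_{C_\mathfrak{p}})$, restriction through $\phi$ defines an $\ell$-homomorphism $\psi\colon R_\mathfrak{p}[\Free{G}{\V}] \to \Aut{\Omega_{C_\mathfrak{p}}}$, and the image of $\psi$ contains $\rho_{C_\mathfrak{p}}[G]$ and hence the whole of $H_{C_\mathfrak{p}}$.

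The main obstacle I anticipate is the careful check that $\psi$ is not merely a group homomorphism but an $\ell$-homomorphism: this requires that pointwise meets and joins of $\phi(\Omega_{C_\mathfrak{p}})$-preserving automorphisms again preserve $\phi(\Omega_{C_\mathfrak{p}})$, which reduces to noting that in the chain $\Omega_\mathfrak{p}$, the pointwise meet (resp.\ join) of $f$ and $g$ at a point takes the value $f(x)$ or $g(x)$, and that $\phi$ is an order-embedding, so the restricted meets and joins match those in $\Aut{\Omega_{C_\mathfrak{p}}}$. Once this is established, the inclusion $H_{C_\mathfrak{p}} \sst \psi[R_\mathfrak{p}[\Free{G}{\V}]]$ places $H_{C_\mathfrak{p}}$ inside an object of $\V$ as an $\ell$-subgroup, which completes the verification that $C_{\mathfrak{p}} \in \Pord{G}{\V}$.
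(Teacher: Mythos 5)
Your proof is correct, and its overall shape matches the paper's: both hinge on the comparison map $[a]\mapsto\p\eta(a)$ between $\Omega_{C_\p}$ and $\Omega_\p$, and both ultimately derive $H_{C_\p}\in\V$ from the fact that $R_\p[\Free{G}{\V}]$ is a homomorphic image of the free object. The one genuine divergence is in how you transfer membership in $\V$ across that comparison map. The paper first proves (using Lemma~\ref{l:unitproperties} and the fact that $\Omega_\p$ is a lattice quotient of $\Free{G}{\V}$) that every coset $\p x$ equals $\p\eta(a)$ for some $a\in G$; hence the comparison map is \emph{surjective}, $\Aut{\Omega_{C_\p}}\cong\Aut{\Omega_\p}$ as $\ell$-groups, and matching generating sets gives $H_{C_\p}\cong R_\p[\Free{G}{\V}]$ outright. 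You instead treat the comparison map only as an order embedding onto a subchain $S\sst\Omega_\p$ and restrict along it; this buys you independence from the surjectivity claim, but at the price of the extra verification that the automorphisms preserving $S$ (together with their inverses) form an $\ell$-subgroup of $\Aut{\Omega_\p}$ on which restriction is an $\ell$-homomorphism---your observation that pointwise meets and joins in $\Aut$ of a chain take one of the two values being compared, and that $(f\wedge g)^{-1}=f^{-1}\vee g^{-1}$, is exactly what is needed there, and you do also need (as the paper does) that $R_\p[\Free{G}{\V}]$ is generated as an $\ell$-group by $R_\p[\eta[G]]$, i.e.\ Lemma~\ref{l:unitproperties}. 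Both routes are sound; the paper's yields the slightly stronger conclusion that $H_{C_\p}$ and $R_\p[\Free{G}{\V}]$ are isomorphic, a fact it reuses implicitly elsewhere, whereas yours closes under $H$ and $S$ in the variety and is marginally more self-contained. Your explicit checks of items (a) and (b), which the paper dismisses as easy, are also correct.
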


\begin{proof}
It is easy to see that $\pi$ is a function from $\Spec{\Free{G}{\V}}$ into $\RPord{(G)}$, and it remains to show that $H_{C_\p} \in \V$ for any $\p \in \Spec{\Free{G}{\V}}$.

\begin{claim}\label{cl:chain}
For any $\p \in \Spec{\Free{G}{\V}}$ and any element $x \in \Free{G}{\V}$, it holds that $\p x = \p \eta(a)$, for some $a \in G$.
\end{claim}

\begin{proof}
Indeed, by Lemma~\ref{l:unitproperties}, each $x \in \Free{G}{\V}$ is of the form $\bigwedge_I\bigvee_{J_i} \eta(a_{ij})$, for $a_{ij} \in G$ and $i \in I, j \in J_i$ finite index sets. Since the chain $\Omega_\p$ is a lattice quotient of $\Free{G}{\V}$, we have $\p x = \bigwedge_I\bigvee_{J_i} \p\eta(a_{ij})$. Therefore, $\p x = \p \eta(a_{ij})$, for some $a_{ij} \in G$, and some $i \in I, j \in J_i$.
\end{proof}

\begin{claim}
For any $\p \in \Spec{\Free{G}{\V}}$, the images $R_\p[\eta[G]]$ and $R_{C_\p}[G]$ are isomorphic as groups.
\end{claim}

\begin{proof}
Given Claim~\ref{cl:chain}, it now follows from the definition of $C_\p$ that the map 
\[
\Omega_{C_\p} \xrightarrow{ \ \tau \ } \Omega_\p 
\]
defined by $[a] \mapsto \p\eta(a)$ is an order isomorphism between $\Omega_\p$ and the quotient $\Omega_{C_\p}$ of $G_{C_\p}$. (Recall that the notation $\Omega_C$, here applied to the case $C = C_\p$, was defined in Section~\ref{s:main}.) Therefore, the $\ell$-groups $\Aut{\Omega_\p}$ and $\Aut{\Omega_{C_\p}}$ are isomorphic through the map
\begin{align}\label{eq:tau}
\Aut{\Omega_{C_\p}} &\xrightarrow{ \ \widehat{\tau} \ } \Aut{\Omega_\p} 
\end{align}
sending $f \in \Aut{\Omega_{C_\p}}$ to the order automorphism $\widehat{\tau}(f) \colon \p\eta(a) \mapsto \tau(f([a]))$. Finally, the $\ell$\nbd{-}isomorphism $\widehat{\tau}$ restricts to a bijection between $R_\p[\eta[G]]$ and $R_{C_\p}[G]$. Indeed, for $b \in G$,
\[
\widehat{\tau}(R_{C_\p}(b)) \colon \p\eta(a) \mapsto \p\eta(ab),
\] 
that is, $\widehat{\tau}(R_{C_\p}(b)) = R_\p(\eta(b))$.
\end{proof}

By the preceding claim, and by the facts that the $\ell$-group $R_\p[\Free{G}{\V}]$ is generated by $R_\p[\eta[G]]$, and similarly $H_{C_\p}$ is generated by $R_{C_\p}[G]$, we infer that $R_\p[\Free{G}{\V}]$ and $H_{C_\p}$ are isomorphic. The homomorphic image $R_\p[\Free{G}{\V}]$ is in $\V$ because $\Free{G}{\V}$ is. Hence, $H_{C_\p} \in \V$, and this concludes the proof.
\end{proof} 

\begin{lemma}\label{l:inverse}
For any object $G$ in $\P$ and any variety $\V$ of $\ell$-groups, the maps $\kappa$ and $\pi$ are mutually inverse.
\end{lemma}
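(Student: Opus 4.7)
The plan is to verify the two composites $\pi\circ \kappa$ and $\kappa\circ \pi$ separately. For the first, I will argue membership of an element $a\in G$ in the pre-cone; for the second, I will argue equality of primes inside $\Free{G}{\V}$. In both cases the crucial tool is the isomorphism $\widehat{\tau}\colon \Aut{\Omega_{C_\p}} \to \Aut{\Omega_\p}$ already constructed during the proof of Lemma~\ref{l:welldefpi}, together with Lemma~\ref{l:unitproperties} which says that $\eta[G]$ generates $\Free{G}{\V}$ as an $\ell$-group.

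For $\pi(\kappa(C)) = C$, I would fix $a \in G$ and unravel both sides. We have $a \in \pi(\kappa(C))$ if and only if $\kappa(C)\eta(\e) \leq \kappa(C)\eta(a)$ in $\Free{G}{\V}/\kappa(C)$, i.e.\ $\eta(\e) \leq t\,\eta(a)$ for some $t \in \kappa(C)$. Applying the $\ell$-homomorphism $h_C$ (which is order-preserving) and evaluating at $[\e] \in \Omega_C$ gives $[\e] \leq h_C(t)(R_C(a)([\e])) = h_C(t)([a]) = [a]$, using that $h_C(t) \in H_C[\e]$ fixes $[\e]$ and the right-to-left composition convention in $\Aut{\Omega_C}$; thus $a \in C$. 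For the converse, given $a \in C$, I will take the explicit witness $t \coloneqq \iv{\eta(a)} \vee \eta(\e)$. The inequality $\eta(\e) \leq t\,\eta(a)$ is immediate because $t \geq \iv{\eta(a)}$. To see $t \in \kappa(C)$, I compute $h_C(t)([\e]) = \max\{\,\iv{R_C(a)}([\e]),\,[\e]\,\} = \max\{\,[\iv{a}],\,[\e]\,\}$; from $a\in C$, i.e.\ $\e \preceq_C a$, right invariance yields $[\iv{a}] \leq [\e]$, so $h_C(t)([\e])=[\e]$, as needed.

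For $\kappa(\pi(\p)) = \p$, I would invoke $\widehat{\tau}$. By construction $\widehat{\tau}(R_{C_\p}(b)) = R_\p(\eta(b))$ for every $b\in G$, so $\widehat{\tau} \circ h_{C_\p}$ and $R_\p$ are $\ell$-homomorphisms out of $\Free{G}{\V}$ that agree on the generating set $\eta[G]$, hence coincide by Lemma~\ref{l:unitproperties}. Consequently $h_{C_\p}(x) \in H_{C_\p}[\e]$ if and only if $\widehat{\tau}(h_{C_\p}(x))$ fixes $\widehat{\tau}([\e]) = \p$, if and only if $R_\p(x)(\p) = \p x = \p$, if and only if $x \in \p$. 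Therefore $\kappa(C_\p) = \iv{h_{C_\p}}(H_{C_\p}[\e]) = \p$.

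I expect the main obstacle to lie in the converse half of $\pi\circ\kappa = \mathrm{id}$: one must produce, by hand, a lattice-theoretic witness $t\in \Free{G}{\V}$ that both bounds $\iv{\eta(a)}$ and belongs to $\kappa(C)$. The guess $t=\iv{\eta(a)} \vee \eta(\e)$ works precisely because the action of $H_C$ on $\Omega_C$ is pointwise and because $a\in C$ forces $[\iv a]\leq [\e]$. Everything else is a clean transport of structure along $\widehat{\tau}$ and repeated use of the universal property of $\eta$.
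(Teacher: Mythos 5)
Your proof is correct and follows essentially the same route as the paper's: the same unravelling of the coset order for $\pi\circ\kappa=\mathrm{id}$ (your witness $t=\iv{\eta(a)}\vee\e$ is exactly the inverse of the element $\eta(a)\wedge\e$ that the paper places in $\kappa(C)$), and the same transport along $\widehat{\tau}$ for $\kappa\circ\pi=\mathrm{id}$, where you make explicit the agreement of $\widehat{\tau}\circ h_{C_\p}$ and $R_\p$ on the generating set $\eta[G]$. One small slip: under the paper's convention that $f\cdot g$ means ``$g$ after $f$'', the evaluation of $h_C(t\eta(a))$ at $[\e]$ is $R_C(a)\bigl(h_C(t)([\e])\bigr)=R_C(a)([\e])=[a]$, not $h_C(t)([a])$ --- the latter need not equal $[a]$ just because $h_C(t)$ stabilises $[\e]$, but the correct reading yields the same conclusion.
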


\begin{proof}
Let $C \in \Pord{G}{\V}$, and let $\p \coloneqq \iv{h_C}(H_C[\e])$. We show that $\pi \circ \kappa$ is the identity on $\Pord{G}{\V}$, that is $C_\p = C$. (Recall from \eqref{eq:cp} the definition of the pre-order associated to $C_\p$.)  

If $a \in C$, then 
\[
h_C(\eta(a) \wedge \e)([\e]) = R_C(a)([\e]) \wedge [\e] = [\e].
\]
Therefore, $(\eta(a) \wedge \e) \in \p$, and hence, $\p\eta(a) \geq \p \e$. This shows $C \sst C_\p$. Conversely, pick $a \in C_\p$, that is, $a$ is  such that $\p \e \leq \p\eta(a)$ in $\Omega_{\p}$. This means that $\e \leq t\eta(a)$, for some $t \in \p$. Hence, 
\[
h_C(t\eta(a) \wedge \e) = h_C(\e).
\]
Therefore, the element $h_C(t\eta(a) \wedge \e)$ is in the stabiliser of $[\e]$, which entails:
\begin{equation}\label{eq:eq1}
h_C(t\eta(a) \wedge \e)([\e]) = (h_C(t)h_C(\eta(a)) \wedge h_C(\e))([\e]) = [\e].
\end{equation}
Since $h_C \circ \eta = R_C$, from \eqref{eq:eq1} we obtain
\begin{equation}\label{eq:eq2}
R_C(a)(h_C(t)([\e])) \wedge h_C(\e)([\e]) =  [\e].
\end{equation}
But $t \in \p$, and thus $h_C(t)([\e]) = [\e]$; so, from \eqref{eq:eq2} we infer $R_C(a)([\e]) \wedge [\e] = [\e]$, i.e., $a \in C$.

To show that $\kappa \circ \pi$ is the identity on $\Spec{\Free{G}{\V}}$, we prove $\kappa(C_\p) = \p$ for a prime $\p$ of $\Free{G}{\V}$. By definition, $x \in \kappa(C_\p)$ if, and only if, $h_{C_\p}(x)([\e]) = [\e]$. By applying the map $\widehat{\tau}$ defined in \eqref{eq:tau}, this is equivalent to $R_\p(x)(\p \e) = \p \e$, that is, $x \in \p$.
\end{proof}

In order to show that $\kappa$ is order preserving, we begin by recording an easy observation.

\begin{proposition}\label{p:posnormform}
Let $H$ be an $\ell$-group generated by a subgroup $S\sst H$, and let $x\in H^+$. Then $x$ lies in the sublattice of $H$ generated by $\{\, s\vee e \mid s \in S \,\}$.
\end{proposition}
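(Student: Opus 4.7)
The plan is to reduce the statement to a normal-form result: every element of $H$ can be written as a finite join of finite meets of elements of $S$. Once this is available, positivity of $x$ will let us absorb $\vee \e$ into the expression via distributivity of the lattice of $H$, producing the desired lattice combination of elements of the form $s \vee \e$.

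For the normal form, I would introduce
\[
T \;\coloneqq\; \Bigl\{\, \bigvee_{i \in I}\bigwedge_{j \in J_i} s_{ij} \;\Big|\; s_{ij} \in S;\ I,\, J_i \text{ finite and nonempty}\,\Bigr\} \;\sst\; H
\]
and verify that $T$ is an $\ell$-subgroup of $H$. Since $S \sst T$ and $H$ is generated as an $\ell$-group by $S$, this forces $T = H$. Closure of $T$ under $\vee$ is trivial; under $\wedge$ it follows from lattice distributivity; under multiplication it follows from the two-sided distributivity of $\cdot$ over $\vee$ and $\wedge$, together with the fact that $S$ is closed under products. The step requiring the most care, and the one I expect to be the main obstacle, is closure under inversion: the De~Morgan identities $(a \vee b)^{-1} = a^{-1} \wedge b^{-1}$ and $(a \wedge b)^{-1} = a^{-1} \vee b^{-1}$ turn $(\bigvee_i \bigwedge_j s_{ij})^{-1}$ into a meet of joins; one then reapplies lattice distributivity to convert that meet of joins back into a join of meets, exploiting that $S$ is also closed under inverses.

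With the normal form in hand, take $x \in H^+$ and write $x = \bigvee_i \bigwedge_j s_{ij}$ with $s_{ij} \in S$. Because $x \geq \e$, we have
\[
x \;=\; x \vee \e \;=\; \bigvee_i \Bigl(\bigl(\bigwedge_j s_{ij}\bigr) \vee \e\Bigr) \;=\; \bigvee_i \bigwedge_j (s_{ij} \vee \e),
\]
the last equality using $(\bigwedge_j a_j) \vee \e = \bigwedge_j(a_j \vee \e)$. Since each $s_{ij} \vee \e$ is one of the generators of the sublattice of $H$ generated by $\{\, s \vee \e \mid s \in S \,\}$, so is $x$. Everything outside the inversion-closure step is a direct application of the distributive identities recalled in the proof of Lemma~\ref{l:unitproperties}.
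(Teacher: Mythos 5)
Your proposal is correct and follows essentially the same route as the paper: both rest on the fact that a generating subgroup $S$ generates $H$ as a lattice (so $x$ has a join-of-meets, equivalently meet-of-joins, normal form over $S$), and then absorb $\vee\,\e$ into the innermost terms using $x=x\vee\e$ together with one application of lattice distributivity and the trivial identity for the other operation. The only differences are that you work with the dual normal form $\bigvee\bigwedge$ instead of the paper's $\bigwedge\bigvee$, and that you spell out the normal-form lemma (including closure under inversion) that the paper takes as read from the argument in Lemma~\ref{l:unitproperties}.
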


\begin{proof}
There are finite index sets $I$ and $J_i$ and elements $s_{ij}\in S$, $i\in I$ and $j\in J_i$, such that $x=\bigwedge_I\bigvee_{J_i} s_{ij}$. Since $x\geq \e$ we have $x\vee \e =x$, so we obtain $x=\left(\bigwedge_I\bigvee_{J_i} s_{ij}\right)\vee \e$. By distributivity, $x=\bigwedge_I\left(\bigvee_{J_i} s_{ij}\vee \e\right)$. In any lattice, 
\[
(a\vee b)\vee \e=(a\vee \e)\vee (b\vee \e),
\] 
so $x=\bigwedge_I\bigvee_{J_i} (s_{ij}\vee \e)$. 
\end{proof} 

\begin{lemma}\label{l:orderpres}
For any object $G$ in $\P$ and any variety $\V$ of $\ell$-groups, the map $\kappa$ is inclusion preserving.
\end{lemma}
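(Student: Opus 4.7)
The plan is to reduce the containment $\kappa(C) \sst \kappa(C')$ to its positive part, and then to read off an explicit membership criterion for $\kappa(C)$ which is plainly monotone in $C$.

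First I would observe that the stabiliser $H_C[\e]$ is a convex $\ell$-subgroup of $H_C$: its convexity inside $\Aut{\Omega_C}$ is immediate from the fact that $\Omega_C$ is a chain, and the property descends to the $\ell$-subgroup $H_C$. Therefore $\kappa(C) = \iv{h_C}(H_C[\e])$ is a convex $\ell$-subgroup of $\Free{G}{\V}$, so $x \in \kappa(C)$ if and only if $|x| \in \kappa(C)$. It thus suffices to prove $x \in \kappa(C) \Rightarrow x \in \kappa(C')$ for $x \in \Free{G}{\V}^+$.

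For such $x$, Lemma \ref{l:unitproperties} together with Proposition \ref{p:posnormform} yields a representation
\[
x = \bigwedge_{i \in I}\bigvee_{j \in J_i}\bigl(\eta(a_{ij}) \vee \e\bigr),
\]
with $I$ and each $J_i$ finite and $a_{ij} \in G$. Pushing this formula through the $\ell$\nbd{-}homomorphism $h_C$ (using $h_C\circ \eta = R_C$) and evaluating pointwise at $[\e] \in \Omega_C$, while recalling that the lattice operations in $\Aut{\Omega_C}$ are computed pointwise and that $\Omega_C$ itself is a chain, I get
\[
h_C(x)\bigl([\e]\bigr) = \bigwedge_{i \in I}\bigvee_{j \in J_i}\bigl([a_{ij}] \vee [\e]\bigr).
\]
Since $h_C(x) \geq \e$ forces $h_C(x)([\e]) \geq [\e]$, and a finite meet in a chain of elements $\geq [\e]$ equals $[\e]$ precisely when one of them does, the value above equals $[\e]$ if and only if some $i \in I$ satisfies $[a_{ij}] \leq [\e]$ for every $j \in J_i$, i.e.\ $\iv{a_{ij}} \in C$ for every $j \in J_i$.

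The resulting criterion depends on $C$ only through the set-theoretic membership `$\iv{a_{ij}} \in C$', and is therefore manifestly monotone: if $C \sst C'$, the witness index $i$ for $x \in \kappa(C)$ is also a witness for $x \in \kappa(C')$. The only delicate step I expect is keeping the bookkeeping of the quotient chain $\Omega_C$ straight, so that pointwise evaluation at $[\e]$ really does commute with the finite meets and joins in $\Aut{\Omega_C}$; once that is handled, the lemma is exactly the monotonicity just observed.
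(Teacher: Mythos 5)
Your proposal is correct and follows essentially the same route as the paper's proof: write a positive element in the normal form $\bigwedge_I\bigvee_{J_i}(\eta(a_{ij})\vee\e)$, evaluate at $[\e]$ in the chain $\Omega_C$, and observe that the resulting membership criterion ($\iv{a_{i^*j}}\in C$ for all $j\in J_{i^*}$, for some $i^*$) is monotone in $C$. The explicit reduction to positive elements via convexity of $\kappa(C)$ is a detail the paper leaves implicit, since $\kappa(C)$ is by construction a prime (hence convex $\ell$-) subgroup.
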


\begin{proof}
Let $C \sst D \in \Pord{G}{\V}$, and pick $x = \bigwedge_{I}\bigvee_{J_i} (\eta(a_{ij}) \vee \e) \in \Free{G}{\V}^+$ such that $x \in \kappa(C)$, i.e., $h_C(x)([\e]) = [\e]$. This means:
\begin{align}\nonumber
h_C(x) & =  h_C\left(\bigwedge_{I}\bigvee_{J_i} (\eta(a_{ij}) \vee \eta(\e))\right) \\ \nonumber
& = \bigwedge_{I}\bigvee_{J_i} h_C(\eta(a_{ij}) \vee \eta(\e)) \\ \nonumber
& = \bigwedge_{I}\bigvee_{J_i} (R_C(a_{ij}) \vee R_C(\e)).
\end{align}
Hence, $h_C(x)([\e]) = [\e]$ if, and only if, 
\[
\bigwedge_{I}\bigvee_{J_i} ([a_{ij}] \vee [\e]) = [\e]  \text{ in } \Omega_C.
\]
Observe that $\bigvee_{J_i} ([a_{ij}] \vee [\e]) \geq_C [\e]$ for every $i \in I$ and hence, $h_C(x)([\e]) = [\e]$ if, and only if, 
\[
\bigvee_{J_{i^*}} ([a_{i^*j}] \vee [\e]) = [\e]
\] 
for some $i^* \in I$. Writing $J_{i^*} = \{\, 1, \dots, n \,\}$, and reindexing if necessary, we have 
\[
[a_{i^*1}] \leq_C [a_{i^*2}] \leq_C \dots \leq_C [a_{i^*n}] \leq_C [\e] \text{ in }\Omega_C,
\]
and hence, 
\[
[a_{i^*1}] \leq_D [a_{i^*2}] \leq_D \dots \leq_D [a_{i^*n}] \leq_D [\e]  \text{ in } \Omega_D.
\]
Therefore, 
\[
\bigwedge_{I}\bigvee_{J_i} ([a_{ij}] \vee [\e]) = [\e]  \text{ in } \Omega_D,
\]
which is equivalent to $h_D(x)([\e]) = [\e]$.
\end{proof}

\begin{theorem}\label{t:mutinv}
For any partially ordered group $G$ and any variety $\V$ of $\ell$-groups, the maps $\kappa$ and $\pi$ are mutually inverse, inclusion-preserving bijections.
\end{theorem}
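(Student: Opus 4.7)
The plan is to assemble the theorem from the two immediately preceding lemmas, after supplying the one missing piece: that $\pi$ is also inclusion-preserving. Indeed, by Lemma~\ref{l:inverse}, $\kappa$ and $\pi$ are mutually inverse, so in particular both are bijections; and by Lemma~\ref{l:orderpres}, $\kappa$ is inclusion-preserving. The only thing not yet on the table is inclusion-preservation of $\pi$, which is not automatic from inclusion-preservation of $\kappa$, since bijective order-preserving maps between posets need not have order-preserving inverses.

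To fill this gap, I would argue directly from the definition of $\pi$ in \eqref{eq:cp}. Suppose $\p \sst \q$ in $\Spec{\Free{G}{\V}}$. Then the identity on $\Free{G}{\V}$ descends to a well-defined map of chains $\varphi\colon \Omega_\p \to \Omega_\q$ given by $\p x \mapsto \q x$: it is well-defined because $\p \sst \q$, and it is monotone because, if $\p x \leq \p y$, meaning $x \leq ty$ for some $t \in \p$, then $t \in \q$ as well, so $\q x \leq \q y$. Specialising to generators $\eta(a)$ for $a \in G$, we obtain that $\p\e \leq \p \eta(a)$ implies $\q \e \leq \q \eta(a)$, which by the definition of the positive cones $C_\p$ and $C_\q$ says exactly that $C_\p \sst C_\q$, i.e., $\pi(\p) \sst \pi(\q)$.

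There is no real obstacle here: all of the nontrivial content—the explicit construction of the inverse on both sides, and the inclusion-preservation of $\kappa$ via the normal-form argument relying on Proposition~\ref{p:posnormform}—has already been carried out in Lemmas~\ref{l:inverse} and~\ref{l:orderpres}. Theorem~\ref{t:mutinv} is thus obtained by simply citing these two lemmas and adding the short monotonicity-of-the-quotient-projection argument sketched above. This order-theoretic statement will then serve as the starting point for Section~\ref{s:homeo}, where the remaining topological content of Theorem~\ref{t:main}—namely that $\kappa$ and $\pi$ are homeomorphisms and restrict suitably to $\Bord{G}{\V}$ and $\Specn{\Free{G}{\V}}$—is to be established.
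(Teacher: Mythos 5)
Your proposal is correct and follows essentially the same route as the paper: cite Lemma~\ref{l:inverse} for mutual inversion, Lemma~\ref{l:orderpres} for inclusion-preservation of $\kappa$, and then verify inclusion-preservation of $\pi$ directly by unwinding the quotient order ($a \in C_\p$ iff $\e \leq t\eta(a)$ for some $t \in \p \sst \q$, whence $a \in C_\q$). Your additional remark that monotonicity of $\pi$ is not automatic from that of its inverse is a sound observation, and your packaging of the key step as a monotone map of chains $\Omega_\p \to \Omega_\q$ is only a cosmetic rephrasing of the paper's argument.
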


\begin{proof}
By Lemma~\ref{l:inverse}, the maps $\kappa$ and $\pi$ are mutually inverse. By Lemma~\ref{l:orderpres}, $\kappa$ is inclusion preserving. If $\p \sst \q \in \Spec{\Free{G}{\V}}$, an element $a \in C_\p$ if, and only if, $\p \e \leq \p\eta(a)$. The latter is equivalent to $\e \leq t\eta(a)$, for some $t \in \p$. Hence, $\e \leq t\eta(a)$, for some $t \in \p \sst \q$, and therefore, $a \in C_\q$. Thus, $\pi$ is inclusion preserving.
\end{proof} 

\begin{lemma}\label{l:preord}
For any partially ordered group $G$ and any right pre-order $C$ on $G$, the quotient $\Omega_C$ is a totally ordered group with group operation $[a][b] = [ab]$ if, and only if, $C \in \BOrd{(G)}$. In that case, $H_C$  is isomorphic to $\Omega_C$. 
\end{lemma}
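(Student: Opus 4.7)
The plan is to hang the whole argument on the following single fact: the formula $[a][b] := [ab]$ is well-defined on $\Omega_C$ if, and only if, $C$ is closed under conjugation. This in turn reduces to a direct manipulation of cosets, since $\equiv_C$ is automatically a \emph{right} congruence on $G$ (the identity $a'b(ab)^{-1} = a'\iv{a}$ makes $a \equiv_C a' \Rightarrow ab \equiv_C a'b$ trivial), so the entire content is in left-compatibility: from the computation $ab'(ab)^{-1} = a(b'\iv{b})\iv{a}$ one reads off that ``$b \equiv_C b'$ implies $ab \equiv_C ab'$ for every $a \in G$'' is exactly the condition $aC\iv{a} \sst C$ for every $a \in G$, i.e., $C \in \BOrd{(G)}$.

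Given this, both directions of the biconditional follow easily. If $\Omega_C$ is a totally ordered group under $[a][b]=[ab]$, then in particular the operation is well-defined, hence $C$ is normal. Conversely, if $C \in \BOrd{(G)}$, then $\Omega_C$ is the quotient group $G/(C \cap \iv{C})$, and the only thing left to check is compatibility of the total order $\leq_C$ with multiplication: right-invariance is inherited from $G_C$, while left-invariance $[a]\leq_C[b] \Rightarrow [ta]\leq_C[tb]$ unpacks to $b\iv{a}\in C \Rightarrow t(b\iv{a})\iv{t}\in C$, which is again normality.

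For the ``in that case'' statement I will build the map
\[
\varphi \colon \Omega_C \longrightarrow H_C, \qquad \varphi([a]) \coloneqq \rho_C(a),
\]
and show that it is an $\ell$-group isomorphism. Well-definedness is the same normality calculation; the identity $\rho_C(ab) = \rho_C(a) \cdot \rho_C(b)$, using the paper's convention $f \cdot g = g \circ f$ together with $\rho_C(a)([b]) = [ba]$, makes $\varphi$ a group homomorphism; injectivity and order-reflection are obtained by evaluating at $[\e]$. The principal step is surjectivity: I must show that $\rho_C[G]$ is \emph{already} closed under the pointwise lattice operations of $\Aut{\Omega_C}$, so that it coincides with the $\ell$-subgroup $H_C$ it generates. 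Totality is used essentially here: given $a, b \in G$, either $[a] \leq_C [b]$ or $[b] \leq_C [a]$, and by left-invariance of $\leq_C$ on $\Omega_C$ this lifts to a pointwise comparison of $\rho_C(a)$ and $\rho_C(b)$ on all of $\Omega_C$; the pointwise join and meet are then simply the larger and the smaller of $\rho_C(a),\rho_C(b)$, each of which lies in $\rho_C[G]$.

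The main obstacle is really just bookkeeping: once the reduction to normality is made, every remaining step is either a short coset calculation or a pointwise-order comparison. The step requiring the most care is the surjectivity of $\varphi$, where the interplay between totality of $\leq_C$ and its left-invariance on $\Omega_C$ is what allows lattice operations in $\Aut{\Omega_C}$ to collapse back into $\rho_C[G]$.
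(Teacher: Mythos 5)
Your central reduction---``$[a][b]\coloneqq[ab]$ is well-defined on $\Omega_C$ if, and only if, $C$ is closed under conjugation''---is false, and this breaks your forward direction of the biconditional. Unwinding the definitions, $b \equiv_C b'$ means $b'\iv{b} \in C \cap \iv{C}$, not $b'\iv{b}\in C$; so the condition ``$b\equiv_C b'$ implies $ab\equiv_C ab'$ for every $a$'' says exactly that the \emph{subgroup} $C\cap\iv{C}$ is normal in $G$, which is strictly weaker than normality of the pre-cone $C$. Concretely, take $G$ a free group with trivial partial order and $C$ the positive cone of a right order that is not a bi-order: then $C\cap\iv{C}=\{\,\e\,\}$, $\equiv_C$ is equality, and $[a][b]=[ab]$ is trivially well-defined, yet $C\notin\BOrd{(G)}$. (This is consistent with the lemma, because in that example $\Omega_C$ is a group carrying a total order but not a \emph{totally ordered} group: $\leq_C$ fails to be left-invariant.) The repair is to use the full hypothesis, as the paper does: if $\Omega_C$ is a totally ordered group, then $[a]\leq_C[b]$ implies $[s][a][t]\leq_C[s][b][t]$, i.e.\ $a\preceq_C b$ implies $sat\preceq_C sbt$; taking $a=\e$ and $t=\iv{s}$ yields $sC\iv{s}\sst C$.

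The rest of your argument is sound and coincides with the paper's. The converse direction (normality of $C$ gives both well-definedness of the operation and two-sided compatibility of the order) is correct, and your map $\varphi([a])=\rho_C(a)$ is exactly the paper's map $q$. The decisive point---that $\rho_C[G]$ is totally ordered in the pointwise order of $\Aut{\Omega_C}$, by totality of $\leq_C$ together with its left-invariance, and hence is already a sublattice coinciding with $H_C$---is identical in both proofs.
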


\begin{proof}
For a pre-order $C \in \BOrd{(G)}$, it is immediate that $\equiv_C$ is a group congruence, and $\Omega_C$ is a totally ordered group. Conversely, if $\Omega_C$ is a group with operation $[a][b] = [ab]$ totally ordered by $\leq_C$, we have that $a \preceq_C b$ implies $[sat] \leq_C [sbt]$ for every $a, b, t, s \in G$. That is, $sat \preceq_C sbt$.

Now, the map 
\begin{align}\nonumber
\Omega_C &\xrightarrow{ \ q \ } H_C 
\end{align}
defined by $[a] \mapsto \rho_C(a)$ is a group homomorphism. Moreover, $[a] <_C [b]$ if, and only if, $[ta] <_C [tb]$ for every $t \in G$. Hence, $q$ is an order isomorphism onto $\rho_C[G]$, and since the $\ell$-group $H_C$ generated by the totally ordered group~$\rho_C[G]$ is $\rho_C[G]$, the proof is complete. 
\end{proof}

Note that $q([a]) \in H_C[\e]$ if, and only if, $\rho_C(a)([\e]) = [\e]$, that is, $[a] = [\e]$.

\begin{theorem}\label{t:preordersn}
For any partially ordered group $G$ and any variety $\V$ of $\ell$-groups, if $C \in \Bord{G}{\V}$, then $\kappa(C)$ is a prime ideal of $\Free{G}{\V}$. Further, if $\p \in \Specn{\Free{G}{\V}}$, then $\pi(\p)$ is a pre-order on $G$.
\end{theorem}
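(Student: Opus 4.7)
The plan is to handle the two implications separately, using Lemma~\ref{l:preord} for the direction on $\kappa$ and the totally-ordered-group structure of the quotient $\Free{G}{\V}/\p$ for the direction on $\pi$.

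For the first implication, suppose $C \in \Bord{G}{\V}$, so $C$ is a pre-order. Lemma~\ref{l:preord} gives an $\ell$-isomorphism $q \colon \Omega_C \to H_C$ with $[a] \mapsto \rho_C(a)$, and the observation immediately following that lemma states that $q([a]) \in H_C[\e]$ iff $[a]=[\e]$; so $H_C[\e]$ is the trivial subgroup of $H_C$. Consequently, $\kappa(C)=\iv{h_C}(H_C[\e]) = \ker h_C$, which---being the kernel of an $\ell$-homomorphism---is automatically a normal, convex $\ell$-subgroup, i.e., an ideal of $\Free{G}{\V}$. Since $\kappa(C)$ is already a prime subgroup by the construction of $\kappa$ in Section~\ref{s:main}, we conclude that $\kappa(C) \in \Specn{\Free{G}{\V}}$.

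For the second implication, suppose $\p \in \Specn{\Free{G}{\V}}$. Then $\p$ is normal and $\Free{G}{\V}/\p$ is a chain, so the quotient is in fact a totally ordered group (the group operation on cosets is well-defined, and the order is invariant under it). By Lemma~\ref{l:welldefpi} we already have $C_\p \in \Pord{G}{\V}$; it remains only to check normality, i.e., that $C_\p$ is closed under conjugation by elements of $G$. For any $a \in C_\p$ and $t\in G$,
\[
\p\eta(tat^{-1}) \;=\; (\p\eta(t))\cdot(\p\eta(a))\cdot(\p\eta(t))^{-1} \;\geq\; \p\e,
\]
where the inequality uses the standard fact that in any totally ordered group the positive cone is closed under conjugation. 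Hence $tat^{-1} \in C_\p$, so $C_\p \in \BOrd{(G)}$, and $\pi(\p)\in\Bord{G}{\V}$.

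I do not anticipate a real obstacle here: both implications reduce to direct applications of earlier results. The conceptual point worth emphasising is the identification, in the pre-order case, of $H_C[\e]$ with the trivial subgroup of $H_C$---this is what converts the preimage defining $\kappa(C)$ into a kernel, immediately giving normality. On the other side, the only tool needed beyond Lemma~\ref{l:welldefpi} is the elementary invariance of the positive cone under conjugation in totally ordered groups.
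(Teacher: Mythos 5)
Your proof is correct and follows essentially the same route as the paper's: Lemma~\ref{l:preord} (together with the observation immediately after it) handles the $\kappa$ direction, and normality of $\p$---equivalently, the totally ordered group structure of $\Free{G}{\V}/\p$---handles the $\pi$ direction. Your repackaging of the first step as ``the stabiliser $H_C[\e]$ is trivial, so $\kappa(C)=\ker h_C$ is an ideal'' is just a cleaner phrasing of the paper's direct computation $h_C(\iv{y}xy)=[\iv{b}][\e][b]=[\e]$.
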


\begin{proof}
For $C \in \Bord{G}{\V}$, suppose $x \in \kappa(C)$. We show $\iv{y}xy \in \kappa(C)$, for every $y \in \Free{G}{\V}$. By Lemma~\ref{l:preord}, we identify $H_C$ with $\Omega_C$, and have $h_C(x) = [\e]$. Similarly, given $y \in \Free{G}{\V}$, we have $h_C(y)=[b]$ for some $b \in G$. Therefore, 
\begin{equation}\nonumber
h_C(\iv{y}xy) = h_C(\iv{y})h_C(x)h_C(y)=  [\iv{b}][\e][b] = [\e].
\end{equation} 
If $\p \in \Spec{\Free{G}{\V}}$, and $a, b \in G$, we have $a \preceq_{\pi(\p)} b$ if, and only if, $\eta(a)\eta(\iv{b}) \leq x$ for some $x \in \p$. Therefore, if $\p$ is normal, we also have
\[
\eta(s)\eta(a)\eta(t)\eta(\iv{t})\eta(\iv{b})\eta(\iv{s}) \leq \eta(s)x\eta(\iv{s}) \in \p,
\]
which implies $sat \preceq_{\pi(\p)} sbt$, for every $s, t \in G$.
\end{proof}

\section{Homeomorphism}\label{s:homeo}

If $H$ is an $\ell$-group and $x \in H$, the {\em absolute value} $|x| \in H^+$ of $x$ is defined as $x \vee \iv{x}$. It is classical that the set $\Con{H}$ of convex $\ell$-subgroups of $H$ ordered by inclusion is a complete distributive sublattice of the lattice of subgroups of $H$~\cite[Propositions 7.5 and 7.10]{Darnel95}. Thus, in $\Con{H}$, meet is intersection and join $\bigvee \ksg_i$ is the subgroup of $H$ generated by $\bigcup \ksg_i$. We write~$\Ksg(S)$ to denote the convex $\ell$-subgroup generated by $S \sst H$. If $x \in H$, we write $\Ksg(x)$ for $\Ksg(\{\, x \,\})$, and call it the {\em principal convex $\ell$-subgroup} generated by $x$. 

\begin{proposition}\label{p:princcon}
For any $\ell$-group $H$, and for any $x, y \in H^+$, $z \in H$, the following hold.
\begin{enumerate}
\item [{\rm (1)}] $\Ksg(z) = \Ksg(|z|) = \{\, h \in H \mid |h| \leq |z|^n, \text{ for some }n \in \N\setminus\{\, 0 \,\} \,\}$. 
\item [{\rm (2)}] $\Ksg(x \wedge y) = \Ksg(x) \wedge \Ksg(y) = \Ksg(x) \cap \Ksg(y)$ and $\Ksg(x \vee y) = \Ksg(x) \vee \Ksg(y)$. 
\end{enumerate}
\end{proposition}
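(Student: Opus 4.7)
For (1), I would first establish $\Ksg(z) = \Ksg(|z|)$ by a two-way containment: any convex $\ell$\nbd{-}subgroup containing $z$ contains $z^{-1}$ and hence $|z| = z \vee z^{-1}$; conversely, if $|z|$ lies in a convex $\ell$-subgroup then so does $|z|^{-1}$, and $|z|^{-1} \leq z \leq |z|$ forces $z$ in by convexity. Writing $K \coloneqq \{\, h \in H \mid |h| \leq |z|^n \text{ for some } n \geq 1 \,\}$, the substantive half is $K = \Ksg(|z|)$. To prove $K$ is a convex $\ell$\nbd{-}subgroup I would verify: closure under inverses via $|h^{-1}|=|h|$; closure under products via the estimate $|h_1 h_2| \leq |h_1||h_2| \vee |h_2||h_1|$, itself a consequence of $h \leq |h|$ and two-sided monotonicity of the order with respect to multiplication; closure under $\vee, \wedge$ from analogous bounds on $|h_1 \vee h_2|$ and $|h_1 \wedge h_2|$; and convexity from $|h| \leq |a| \vee |b|$ whenever $a \leq h \leq b$. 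Since $|z| \in K$ this yields $\Ksg(|z|) \subseteq K$. The reverse inclusion is immediate: $|h| \leq |z|^n$ forces $|z|^{-n} \leq h \leq |z|^n$, and both endpoints lie in $\Ksg(|z|)$.

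For (2), the containments $\Ksg(x \wedge y) \subseteq \Ksg(x) \cap \Ksg(y)$ and $\Ksg(x) \vee \Ksg(y) \subseteq \Ksg(x \vee y)$ are routine consequences of $\e \leq x \wedge y \leq x, y \leq x \vee y$ together with convexity, while $\Ksg(x \vee y) \subseteq \Ksg(x) \vee \Ksg(y)$ holds because $\Ksg(x) \vee \Ksg(y)$, being a sublattice containing $x$ and $y$, contains their join.

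The main obstacle is the remaining inclusion $\Ksg(x) \cap \Ksg(y) \subseteq \Ksg(x \wedge y)$. The key auxiliary inequality I would establish is that, for all $a, b, c \in H^+$,
\[
a \wedge bc \,\leq\, (a \wedge b)(a \wedge c).
\]
Indeed, two-sided distributivity of multiplication over $\wedge$ gives $(a \wedge b)(a \wedge c) = a^2 \wedge ba \wedge ac \wedge bc$, and each of these four meetands dominates $a \wedge bc$ (using $a \leq a^2,\, ba,\, ac$, all from $a, b, c \geq \e$). A straightforward induction on $n$ then yields $a \wedge b^n \leq (a \wedge b)^n$. Applied first with $(a,b) = (x^n,y)$ and then with $(a,b) = (y,x)$, this gives
\[
x^n \wedge y^n \,\leq\, (x^n \wedge y)^n \,\leq\, (x \wedge y)^{n^2}.
\]
Hence any $h \in \Ksg(x) \cap \Ksg(y)$, which after taking a common exponent $n$ satisfies $|h| \leq x^n \wedge y^n$, obeys $|h| \leq (x \wedge y)^{n^2}$, and by part (1) belongs to $\Ksg(x \wedge y)$.
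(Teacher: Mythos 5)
Your proof is correct. Note that the paper does not actually prove this proposition: it simply cites Darnel's \emph{Theory of Lattice-Ordered Groups} (Propositions 7.13 and 7.15), so any comparison is with the standard textbook argument rather than with an argument in the paper itself. Your write-up is essentially that standard argument, and every step checks out: the identity $|z|=z\vee z^{-1}$ together with $|z|^{-1}\leq z\leq |z|$ gives $\Ksg(z)=\Ksg(|z|)$; the estimates $|h_1h_2|\leq |h_1||h_2|\vee|h_2||h_1|$, $|h_1\vee h_2|,\,|h_1\wedge h_2|\leq |h_1|\vee|h_2|$, and $|h|\leq |a|\vee|b|$ for $a\leq h\leq b$ do establish that your set $K$ is a convex $\ell$-subgroup; and the key Riesz-type inequality $a\wedge bc\leq (a\wedge b)(a\wedge c)$ for positive $a,b,c$ is verified correctly via distributivity of multiplication over meets, yielding $x^n\wedge y^n\leq (x\wedge y)^{n^2}$ and hence the only non-routine inclusion $\Ksg(x)\cap\Ksg(y)\subseteq\Ksg(x\wedge y)$. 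The one point you leave implicit is that $\Ksg(x)\wedge\Ksg(y)=\Ksg(x)\cap\Ksg(y)$, which is immediate since meet in $\Con{H}$ is intersection, as the paper records. In short: a complete, self-contained proof of a statement the authors chose to import by reference.
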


\begin{proof}
(1)~\cite[Proposition 7.13]{Darnel95}.\ (2)~\cite[Proposition 7.15]{Darnel95}.
\end{proof}

Throughout, we write $\Supp{(x)}$ in place of $\Supp{(\{\, x \,\})}$ for $x \in H$, and similarly for $\Van{(\{\, x \,\})}$.

\begin{proposition}\label{p:openbase}
For any $\ell$-group $H$, the set $\{\, \Supp{(x)} \mid x \in H \,\}$ is a base for the topology of $\Spec{H}$.
\end{proposition}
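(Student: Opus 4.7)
The plan is to unwind definitions. The topology on $\Spec{H}$ is defined by declaring its open sets to be the $\Supp{(A)}$ with $A \sst H$; hence it suffices to exhibit each such $\Supp{(A)}$ as a union of basic sets of the form $\Supp{(x)}$ with $x \in H$. I would observe directly that
$$\Supp{(A)} = \bigcup_{x \in A} \Supp{(x)},$$
an identity immediate from the definitions: a prime $\p$ fails to contain $A$ if, and only if, there is some $x \in A$ with $x \notin \p$, if, and only if, $\p \in \Supp{(x)}$ for some $x \in A$. This verifies the defining property of a base and in particular shows that $\Spec{H}$ itself is covered, since every prime subgroup is by convention proper and therefore omits at least one $x \in H$.

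I do not expect any genuine obstacle, as the claim is essentially a set-theoretic reformulation of the definition of the spectral topology. For later use, I would additionally record that the base $\{\, \Supp{(x)} \mid x \in H \,\}$ is closed under finite intersections---a feature that will become relevant when showing that $\Spec{H}$ is a generalised spectral space. This would follow by first using Proposition~\ref{p:princcon}(1) to pass to absolute values, yielding $\Supp{(x)} = \Supp{(|x|)}$, and then invoking Proposition~\ref{p:princcon}(2) together with the primeness of $\p$ (which is equivalent to: for $a, b \in H^+$, $a \wedge b \in \p$ implies $a \in \p$ or $b \in \p$) to conclude $\Supp{(x)} \cap \Supp{(y)} = \Supp{(|x| \wedge |y|)}$.
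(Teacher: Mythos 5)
Your proof is correct, and it is the standard one-line verification: the paper itself gives no argument here but simply cites~\cite[Proposition 49.7]{Darnel95}, whose content is exactly your identity $\Supp{(A)}=\bigcup_{x\in A}\Supp{(x)}$ together with the observation that each $\Supp{(x)}$ is itself an open set of the declared form. Your closing remark on closure under finite intersections is also sound, but it is not needed for this statement and is recorded separately by the paper as Proposition~\ref{p:princsupp}, proved exactly as you indicate from Proposition~\ref{p:princcon}.
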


\begin{proof}
\cite[Proposition~49.7]{Darnel95}.
\end{proof}

\begin{proposition}\label{p:princsupp}
For any $\ell$-group $H$, and for any $x, y \in H^+$, $z \in H$, the following hold.
\begin{enumerate}
\item  [{\rm (1)}]  $\Supp{(z)} = \Supp{(|z|)}$. 
\item  [{\rm (2)}]  $\Supp{(x \wedge y)} = \Supp{(x)} \cap \Supp{(y)}$ and $\Supp{(x \vee y)} = \Supp{(x)} \cup \Supp{(y)}$. 
\end{enumerate}
\end{proposition}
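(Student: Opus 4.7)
The plan is to reduce both parts to Proposition~\ref{p:princcon} via the single observation that for any $\p \in \Spec{H}$ and any $x \in H$, one has $x \in \p$ if and only if $\Ksg(x) \sst \p$. Indeed $\p$ is itself a convex $\ell$-subgroup containing $x$ on the left-hand side, so it must contain the smallest such subgroup $\Ksg(x)$; the converse is trivial. Hence $\Supp{(x)} = \{\, \p \in \Spec{H} \mid \Ksg(x) \not\sst \p \,\}$, and the two items become statements about principal convex $\ell$-subgroups and primes. Part~(1) is then immediate: Proposition~\ref{p:princcon}(1) gives $\Ksg(z) = \Ksg(|z|)$, and the description above yields $\Supp{(z)} = \Supp{(|z|)}$.

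For the join equation in (2), I would invoke Proposition~\ref{p:princcon}(2), which gives $\Ksg(x\vee y) = \Ksg(x) \vee \Ksg(y)$, where the right-hand join in $\Con{H}$ is the subgroup generated by the union. Thus $\Ksg(x\vee y) \sst \p$ if and only if both $\Ksg(x) \sst \p$ and $\Ksg(y) \sst \p$. Negating and recasting in terms of $\Supp$ gives $\Supp{(x\vee y)} = \Supp{(x)} \cup \Supp{(y)}$.

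For the meet equation, Proposition~\ref{p:princcon}(2) supplies $\Ksg(x\wedge y) = \Ksg(x) \cap \Ksg(y)$. The inclusion $\Supp{(x\wedge y)} \supseteq \Supp{(x)} \cap \Supp{(y)}$ is straightforward: if $\Ksg(x) \cap \Ksg(y) \sst \p$ then, since $\p$ is a sublattice and $\e \leq x\wedge y \leq x, y$, convexity and the fact that $x,y\in H^+$ let one derive a contradiction from assuming $x \notin \p$ and $y \notin \p$ via the standard primeness property. The reverse inclusion is the only point where one must actually use that $\p$ is prime. My plan here is to pick witnesses $a \in \Ksg(x) \setminus \p$ and $b \in \Ksg(y) \setminus \p$, replace them by $|a|$ and $|b|$ (using item~(1) of Proposition~\ref{p:princcon} together with $|a| \in \p \iff a \in \p$ by convexity), and then consider $a \wedge b$; this meet lies in $\Ksg(x) \cap \Ksg(y) = \Ksg(x\wedge y)$ because convex $\ell$-subgroups are sublattices, and it is \emph{not} in $\p$ by the defining property of a prime $\ell$-subgroup applied to the two positive elements $a, b$.

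The only genuinely non-formal input is the equivalent form of primeness used in the last step—namely, that for $a, b \in H^+$, $a \wedge b \in \p$ forces $a \in \p$ or $b \in \p$. This is an immediate consequence of the definition adopted in Section~\ref{s:main} (that $H/\p$ is totally ordered), and I would cite it directly from~\cite{Darnel95}. Apart from this, the proposition is essentially a translation of Proposition~\ref{p:princcon} through the correspondence $x \in \p \iff \Ksg(x) \sst \p$, and I expect no real obstacle beyond being careful that this translation in the meet case requires primeness, while in the join case it uses only the convex $\ell$-subgroup structure.
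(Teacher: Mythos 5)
Your proof is correct in substance and follows exactly the route the paper takes: Proposition~\ref{p:princsupp} is recorded there as an immediate consequence of Proposition~\ref{p:princcon}, via precisely the translation $x \in \p \iff \Ksg(x) \sst \p$ that you make explicit. There is, however, one bookkeeping slip in the meet identity: you announce that $\Supp{(x\wedge y)} \supseteq \Supp{(x)} \cap \Supp{(y)}$ is the straightforward direction and that the reverse is the only place primeness enters, but it is the other way around. The argument you then give for the ``reverse inclusion'' (take $a \in \Ksg(x)\setminus\p$ and $b \in \Ksg(y)\setminus\p$, pass to $|a|, |b|$, and observe that $a \wedge b \in \Ksg(x)\cap\Ksg(y)\setminus\p$ by primeness) in fact proves $\Supp{(x)} \cap \Supp{(y)} \sst \Supp{(x\wedge y)}$ a second time. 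The genuine reverse inclusion $\Supp{(x\wedge y)} \sst \Supp{(x)} \cap \Supp{(y)}$ needs no primeness at all: since $\Ksg(x\wedge y) = \Ksg(x) \cap \Ksg(y) \sst \Ksg(x), \Ksg(y)$, the condition $\Ksg(x\wedge y) \not\sst \p$ already forces $\Ksg(x) \not\sst \p$ and $\Ksg(y) \not\sst \p$. With the two directions relabelled accordingly, everything you wrote is valid, and the join case and part~(1) are exactly as in the paper.
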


\begin{proof}
(1) and (2) are immediate consequences of Proposition~\ref{p:princcon}. 
\end{proof}

\begin{theorem}\label{t:homeom}
For any partially ordered group $G$ and any variety $\V$ of $\ell$-groups, the maps $\kappa$ and $\pi$ are homeomorphisms.
\end{theorem}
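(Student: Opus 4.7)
The plan is to leverage Theorem~\ref{t:mutinv}: since $\kappa$ and $\pi$ are already mutually inverse bijections, establishing continuity in both directions suffices to conclude they are homeomorphisms. I would verify continuity of $\pi$ by checking preimages on the defining subbase $\{\Pa(a) : a \in G\}$ of $\Pord{G}{\V}$, and continuity of $\kappa$ by checking preimages on the base $\{\Supp(x) : x \in \Free{G}{\V}\}$ of $\Spec{\Free{G}{\V}}$ guaranteed by Proposition~\ref{p:openbase}.

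The crux of the argument is the identity
\[
\iv{\pi}(\Pa(a)) = \Supp(\eta(a) \vee \e), \qquad \text{for every } a \in G.
\]
To prove it, I would translate the two defining conditions of $\Pa(a)$, namely $a \in C_\p$ and $\iv{a} \notin C_\p$, into membership conditions in $\p$. Unfolding the definition of the quotient chain order on $\Omega_\p = \Free{G}{\V}/\p$ and using that $\p$ is a convex $\ell$-subgroup containing $\e$, one obtains $a \in C_\p \iff \eta(\iv{a}) \vee \e \in \p$ and, dually, $\iv{a} \in C_\p \iff \eta(a) \vee \e \in \p$. The standard $\ell$-group identity $(\eta(a) \vee \e) \wedge (\eta(\iv{a}) \vee \e) = \e \in \p$, together with primality of $\p$, forces at least one of $\eta(a) \vee \e, \eta(\iv{a}) \vee \e$ to lie in $\p$. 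Hence the single condition $\eta(a) \vee \e \notin \p$ is equivalent to the conjunction $\eta(\iv{a}) \vee \e \in \p$ and $\eta(a) \vee \e \notin \p$, yielding the identity and thereby continuity of $\pi$.

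For continuity of $\kappa$, since $\kappa$ is inverse to $\pi$, I need $\pi(\Supp(x))$ to be open in $\Pord{G}{\V}$ for each $x \in \Free{G}{\V}$. Proposition~\ref{p:princsupp}(1) lets me assume $x \in \Free{G}{\V}^+$; Lemma~\ref{l:unitproperties} together with Proposition~\ref{p:posnormform} then writes $x$ as $\bigwedge_I \bigvee_{J_i} (\eta(a_{ij}) \vee \e)$ over finite index sets. Proposition~\ref{p:princsupp}(2) gives $\Supp(x) = \bigcap_I \bigcup_{J_i} \Supp(\eta(a_{ij}) \vee \e)$; applying $\pi$, which as a bijection commutes with unions and intersections, and invoking the identity above, one gets $\pi(\Supp(x)) = \bigcap_I \bigcup_{J_i} \Pa(a_{ij})$, which is open. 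The main difficulty lies in the first step: carefully converting the chain-order inequality $\p\e \leq \p\eta(a)$ into the algebraic condition $\eta(\iv{a}) \vee \e \in \p$, and then exploiting primality to collapse the two inequalities defining $\Pa(a)$ into the single support condition $\eta(a) \vee \e \notin \p$.
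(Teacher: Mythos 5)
Your proposal is correct and follows essentially the same route as the paper: both reduce to the single subbasic identity $\kappa[\Pa(a)]=\Supp(\eta(a)\vee\e)$ and then transport $\Supp(x)=\bigcap_I\bigcup_{J_i}\Supp(\eta(a_{ij})\vee\e)$ across the bijection using Propositions~\ref{p:posnormform} and~\ref{p:princsupp}. The only (harmless) difference is that you verify the key identity from the spectral side, via the coset order on $\Omega_\p$, the law $(\eta(a)\vee\e)\wedge(\eta(\iv a)\vee\e)=\e$, and primality of $\p$, whereas the paper argues on the pre-order side through $h_C$ and the stabiliser of $[\e]$.
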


\begin{proof}
Since $\kappa$ and $\pi$ are mutually inverse bijections by Theorem~\ref{t:mutinv}, it suffices to show that they both are open maps.

We first show that 
\begin{equation}\label{eq:kpa}
\kappa[\Pa{(a)}] = \Supp{(\eta(a)\vee \e)}, \quad \text{ for }a \in G.
\end{equation} 
Let $C \in \Pa{(a)}$. This means $R_C{(a)}([\e]) >_C [\e]$ in $\Omega_C$, that is, $h_C(\eta(a))([\e]) >_C [\e]$. Therefore, 
\[
h_C(\eta(a) \vee \e)([\e]) = [a] \vee [\e] >_C [\e],
\] 
and hence, $\iv{h_C}(H_C[\e]) \in \Supp(\eta(a) \vee \e)$. Similarly, for $\iv{h_C}(H_C[e]) \in \Supp(\eta(a) \vee \e)$, we prove $C \in \Pa{(a)}$. The assumption entails $h_C(\eta(a) \vee \e)([\e]) = [a] \vee [\e] >_C [\e]$. Since $\Omega_C$ is a chain, this can only happen if $[a] >_C [\e]$. Therefore, $a \in C$ and $\iv{a} \not \in C$. 

Since $\{\, \Pa{(a)} \mid a \in G \,\}$ is a subbase, and $\kappa$, being a bijection, preserves arbitrary intersections and unions, it follows that $\kappa$ is open.

To show $\pi$ is open, by Propositions~\ref{p:openbase} and~\ref{p:princsupp}.(1), together with the fact that $\pi$ is a bijection, it suffices to prove $\pi[\Supp{(x)}]$ is open, for $x \in \Free{G}{\V}^+$. By Proposition~\ref{p:posnormform}, 
\[
\Supp{(x)} = \Supp{(\bigwedge_I \bigvee_{J_i} (\eta(a_{ij}) \vee \e))}
\] 
for some finite index sets $I$ and $J_i$, and elements $a_{ij} \in G$. By the second item of Proposition~\ref{p:princsupp}, 
\[
\Supp{(x)} = \bigcap_I\bigcup_{J_i} \Supp(\eta(a_{ij}) \vee \e).
\] 
Since $\pi$ is a bijection,
\[
\pi[\Supp{(x)}] = \bigcap_I\bigcup_{J_i} \pi[\Supp{(\eta(a_{ij}) \vee \e)}].
\]
By \eqref{eq:kpa}, $\Pa{(a_{ij})} = \pi[\Supp{(\eta(a_{ij})\vee \e)}]$. Therefore, $\pi[\Supp{(x)}] = \bigcap_I\bigcup_{J_i} \Pa{(a_{ij})}$ is open.
\end{proof}

\begin{proof}[Proof of Theorem~\ref{t:main}]
Combine Theorems~\ref{t:mutinv},~\ref{t:preordersn}, and~\ref{t:homeom}.
\end{proof}

\section{Spectrality}\label{s:spec}

We write $\Conp{H}$ for the sublattice of $\Con{H}$ consisting of the principal convex $\ell$-subgroups of $H$; cf.\ Proposition~\ref{p:princcon}. Note that the lattice $\Conp{H}$ does have a minimum ($\Ksg(\e) = \{\, \e \,\}$), but not necessarily a maximum. We prove in this section that $\Spec{H}$ is the Stone dual of the distributive lattice $\Conp{H}$.

Let $D$ be a distributive lattice with a minimum, but not necessarily with a maximum. Topologise the set $X(D)$ of prime ideals of $D$ by declaring that the sets  
\begin{equation}\label{eq:widehat}
\widehat{a} = \{\, \jd \in X(D) \mid a \not\in \jd \,\}, \quad \text{for }a \in D
\end{equation}
form a subbase. The generated topology is known as the {\em Stone topology} of $X(D)$~\cite{Stone38}. 

Recall that an element $x$ of a lattice $D$ is {\em compact} if whenever $S \sst D$ is such that $\bigvee S$ exists, $x \leq \bigvee S$ implies $x \leq \bigvee T$, for some finite $T \sst S$.

\begin{proposition}\label{p:convcomp}
For any $\ell$-group $H$, the set $\Conp{H}$ consists precisely of the compact elements of $\Con{H}$. 
\end{proposition}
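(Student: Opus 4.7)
The plan is to prove the two inclusions separately, relying on the description of joins in $\Con{H}$ recalled at the start of the section together with Proposition~\ref{p:princcon}. Namely, I exploit that $\bigvee_{i \in I} \ksg_i$ in $\Con{H}$ is the subgroup of $H$ generated by $\bigcup_{i \in I} \ksg_i$, and that principal convex $\ell$-subgroups admit the finite-join formula $\Ksg(x) \vee \Ksg(y) = \Ksg(x \vee y)$ for $x, y \in H^+$.

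For the direction that every principal convex $\ell$-subgroup is compact, I would take $x \in H$ and a family $\{\ksg_i\}_{i \in I}$ in $\Con{H}$ with $\Ksg(x) \sst \bigvee_{i \in I} \ksg_i$. Then $x$ lies in the subgroup generated by $\bigcup_{i \in I} \ksg_i$, and since each $\ksg_i$ is already closed under inverses, $x$ can be written as a finite group-theoretic product $x = k_1 \cdots k_n$ with $k_j \in \ksg_{i_j}$ for suitable $i_1, \dots, i_n \in I$. Hence $x \in \bigvee_{j=1}^n \ksg_{i_j}$, and this is a convex $\ell$-subgroup containing $x$, so $\Ksg(x) \sst \bigvee_{j=1}^n \ksg_{i_j}$ by the minimality of $\Ksg(x)$. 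This is exactly the finite-subcover condition.

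For the converse, let $\ksg$ be a compact element of $\Con{H}$. Since $x \in \Ksg(x) \sst \ksg$ for every $x \in \ksg$, we have $\ksg = \bigvee_{x \in \ksg} \Ksg(x)$. Compactness then supplies $x_1, \dots, x_n \in \ksg$ with $\ksg \sst \Ksg(x_1) \vee \cdots \vee \Ksg(x_n)$. Replacing each $x_j$ by $|x_j| \in H^+$ via Proposition~\ref{p:princcon}.(1) and applying Proposition~\ref{p:princcon}.(2) inductively, the right-hand side equals $\Ksg(y)$ where $y \coloneqq |x_1| \vee \cdots \vee |x_n|$. Because $\ksg$ is a convex $\ell$-subgroup containing each $x_j$ (hence each $|x_j|$, hence $y$), we also have $\Ksg(y) \sst \ksg$, and so $\ksg = \Ksg(y) \in \Conp{H}$.

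I do not anticipate a real obstacle: this is the standard ``compact equals finitely generated'' argument specialised to $\Con{H}$. The only substantive point is that a finite join of principal convex $\ell$-subgroups is again principal, which is built into Proposition~\ref{p:princcon}.(2) once absolute values are taken; everything else is bookkeeping against the description of joins in $\Con{H}$.
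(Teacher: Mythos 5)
Your argument is correct. The paper itself offers no proof here---it simply cites \cite[Proposition 7.16]{Darnel95}---and your write-up is the standard argument that reference contains: compactness of $\Ksg(x)$ comes from writing $x$ as a finite product of elements of the $\ksg_i$ (legitimate because, as recalled at the start of Section~4, the join in $\Con{H}$ is the subgroup generated by the union), and the converse comes from $\ksg=\bigvee_{x\in\ksg}\Ksg(x)$ together with $\Ksg(|x_1|)\vee\cdots\vee\Ksg(|x_n|)=\Ksg(|x_1|\vee\cdots\vee|x_n|)$ from Proposition~\ref{p:princcon}. Both inclusions are handled correctly, including the reduction to positive elements needed to apply Proposition~\ref{p:princcon}.(2).
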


\begin{proof}
\cite[Proposition 7.16]{Darnel95}.
\end{proof}

\begin{theorem}\label{t:specdual}
For any $\ell$-group $H$, set $D \coloneqq \Conp{H}$. Then, the map
\begin{align}\label{eq:map}
X(D) & \xrightarrow{ \ h \ } \Con{H} \\\nonumber
\jd & \xmapsto{ \hskip .35 cm}  \bigvee\{\, \Ksg(x) \mid \Ksg(x) \in \jd \,\}
\end{align}
restricts to a homeomorphism between $X(D)$ and $\Spec{H}$. The compact open sets of $\Spec{H}$ are precisely those of the form $\Supp{(x)}$, for $x \in H$.
\end{theorem}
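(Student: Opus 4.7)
The plan is to verify $h$ takes values in $\Spec H$, construct an explicit inverse $k\colon \Spec H \to X(D)$, and transfer the subbase of $X(D)$ onto the base of $\Spec H$ provided by Proposition~\ref{p:openbase}. Both the well-definedness of $h|_{X(D)}\colon X(D) \to \Spec H$ and the inverse identities will rest on one algebraic-lattice fact, which I expect to be the main obstacle: for every $\jd \in X(D)$ and every $x \in H$,
\[
\Ksg(x) \subseteq h(\jd) = \bigvee \jd \ \Longleftrightarrow \ \Ksg(x) \in \jd.
\]
The nontrivial direction follows from compactness of $\Ksg(x)$ in $\Con H$ (Proposition~\ref{p:convcomp}), yielding $\Ksg(x) \subseteq \Ksg(y_1) \vee \cdots \vee \Ksg(y_n)$ for some $\Ksg(y_i) \in \jd$; closure of $\jd$ under finite joins (via Proposition~\ref{p:princcon}(2)) and downward closure then force $\Ksg(x) \in \jd$.

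Granting this equivalence, the rest is routine. Properness of $h(\jd)$: if $h(\jd) = H$, then $\Ksg(x) \in \jd$ for every $x \in H$, contradicting $\jd \neq D$. Primality: for $x, y \in H^+$ with $x \wedge y \in h(\jd)$, Proposition~\ref{p:princcon}(2) gives $\Ksg(x) \wedge \Ksg(y) = \Ksg(x \wedge y) \in \jd$, so primality of $\jd$ in $D$ yields $x \in h(\jd)$ or $y \in h(\jd)$; totality of $H/h(\jd)$ follows. Thus $h|_{X(D)}$ lands in $\Spec H$. I would then define
\[
k\colon \Spec H \longrightarrow X(D), \qquad k(\p) \coloneqq \{\, \Ksg(x) \in D \mid \Ksg(x) \subseteq \p \,\};
\]
an analogous check via Proposition~\ref{p:princcon}(1)--(2) and primality of $\p$ shows $k(\p) \in X(D)$, and both $h \circ k = \mathrm{id}$ and $k \circ h = \mathrm{id}$ follow from the displayed equivalence together with the trivial observation $y \in \Ksg(y)$.

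Finally, for the topological part, a direct computation from~\eqref{eq:widehat} gives
\[
h\bigl(\widehat{\Ksg(x)}\bigr) = \{\, \p \in \Spec H \mid \Ksg(x) \not\subseteq \p \,\} = \Supp(x).
\]
Since the $\widehat{\Ksg(x)}$ form a subbase of $X(D)$ and the $\Supp(x)$ form a base of $\Spec H$ by Proposition~\ref{p:openbase}, the bijection $h$ is a homeomorphism. For the compact-open characterisation, the classical Stone duality for distributive lattices with a minimum identifies the compact opens of $X(D)$ with the $\widehat{a}$ for $a \in D$, whence transferring via $h$ shows every $\Supp(x)$ is compact open in $\Spec H$. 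Conversely, any compact open of $\Spec H$ is a finite union $\Supp(x_1) \cup \cdots \cup \Supp(x_n)$ by Proposition~\ref{p:openbase}, and Propositions~\ref{p:princsupp}(1)--(2) collapse this to $\Supp(|x_1| \vee \cdots \vee |x_n|)$, completing the proof.
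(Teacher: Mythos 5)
Your proposal is correct and follows essentially the same route as the paper: the same compactness argument (via Proposition~\ref{p:convcomp} and Proposition~\ref{p:princcon}(2)) showing $\Ksg(x)\subseteq h(\jd)$ forces $\Ksg(x)\in\jd$, the same inverse $\p\mapsto\{\,\Ksg(x)\mid \Ksg(x)\subseteq\p\,\}$ (equivalently $\{\,\Ksg(x)\mid x\in\p\,\}$), the same identity $h[\widehat{\Ksg(x)}]=\Supp(x)$ for the homeomorphism, and the same appeal to classical Stone duality for the compact opens. No gaps.
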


\begin{proof}

We first show $h$ defined in \eqref{eq:map} is a bijection onto the set $\Spec{H}$ of prime subgroups of $H$. 

Consider $\jd \in X(D)$. If $x \in h(\jd)$, then $\Ksg(x) \in \jd$. Indeed, by Proposition~\ref{p:convcomp}, there are finitely many $\Ksg(x_1), \dots, \Ksg(x_n) \in \jd$ such that $\Ksg(x) \subseteq \Ksg(x_1) \vee \dots \vee \Ksg(x_n)$. As $\jd$ is closed under finite joins and downward closed, we conclude $\Ksg(x) \in \jd$. Injectivity of $h$ is now obvious.

To prove primeness and surjectivity, we make recurrent use of Proposition~\ref{p:princcon}.(2). For primeness, if $x \wedge y \in h(\jd)$, then $\Ksg(x \wedge y) = \Ksg(x) \cap \Ksg(y) \in \jd$. Since $\jd$ is prime, either $\Ksg(x) \in \jd$ or $\Ksg(y) \in \jd$, that is, either $x \in h(\jd)$ or $y \in h(\jd)$. 

For surjectivity, we pick a prime subgroup $\p$ of $H$ and consider the set $\jd_\p = \{\, \Ksg(x) \mid x \in \p \,\}$. Clearly, $\jd_\p$ is downward closed and closed under finite joins. Now, $\Ksg(x) \cap \Ksg(y) \in \jd_\p$ is equivalent to $\Ksg(x \wedge y) \in \jd_\p$,  and the latter is equivalent to $x \wedge y \in \p$. Since $\p$ is prime, either $x \in \p$ or $y \in \p$, and hence, either $\Ksg(x) \in \jd_\p$ or $\Ksg(y) \in \jd_\p$. This shows that $\jd_\p$ is a prime ideal of $\Conp{H}$. Since, evidently, $\p = \bigvee\{\, \Ksg(x) \mid x \in \p \,\}$, we have $h(\jd_\p) = \p$.

Regarding now $h$ as a bijection $h \colon X(D) \to \Spec{H}$, we show that $h$ is a homeomorphism. Indeed, since for $x \in H$, $\Ksg(x) \in \jd$ if, and only if, $x \in h(\jd)$, we have
\begin{equation}\label{eq:openbij}
h[\widehat{\Ksg(x)}] = \{\, \p \in \Spec{H} \mid x \not\in \p \,\} = \Supp{(x)}.
\end{equation}
Since $h$ preserves arbitrary unions and intersections, this shows that $h$ is an open bijection. By Proposition~\ref{p:openbase}, it also shows that $h$ is continuous, and hence a homeomorphism. Finally, it is classical that the compact open sets of $X(D)$ are precisely those of the form $\widehat{\Ksg(x)}$ (see, e.g.,\ \cite{Johnstone86}), so that \eqref{eq:openbij} establishes the last assertion of the statement.
\end{proof} 
\begin{remark}[Reticulation of lattice-ordered groups] Theorem~\ref{t:specdual} exhibits the prime spectrum of an $\ell$-group $H$ via a purely lattice-theoretic construction, as the Stone dual of the lattice  $\Conp{H}$ of principal convex $\ell$\nbd{-}subgroups of $H$. In several variants, the result has circulated as folklore amongst researchers in the field. We have provided a full proof because we are not aware of a reference at this  level of generality. For Abelian lattice-groups with a strong unit, {\it alias} MV-algebras, see~\cite[and references therein]{GGM}. The construction in Theorem~\ref{t:specdual} is the exact analogue for lattice-groups of Simmons' well-known reticulation of a ring~\cite{Simmons1980}.
\end{remark}
An element $u$ of an $\ell$-group $H$ is a {\em strong (order) unit} if for all $x \in H$ there is $n \in \N \setminus \{\, 0 \,\}$ such that $x \leq u^n$; equivalently, by Proposition~\ref{p:princcon}.(1), if $\Ksg{(u)} = H$. 

\begin{corollary}\label{c:specspec}
For any $\ell$-group $H$, the space $\Spec{H}$ is a generalised spectral space. It is spectral if, and only if, the $\ell$-group $H$ has a strong unit.
\end{corollary}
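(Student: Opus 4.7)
My plan is to deduce the statement from the Stone duality obtained in Theorem~\ref{t:specdual}, which identifies $\Spec{H}$ with $X(\Conp{H})$ and exhibits the compact open subsets of $\Spec{H}$ as precisely the sets $\Supp{(x)}$ for $x \in H$. For the first assertion, sobriety is inherited from the Stone dual of a distributive lattice with bottom, which is always sober. The sets $\Supp{(x)}$ form a base for $\Spec{H}$ by Proposition~\ref{p:openbase}, and are closed under pairwise intersection because
\[
\Supp{(x)} \cap \Supp{(y)} = \Supp{(|x|)} \cap \Supp{(|y|)} = \Supp{(|x| \wedge |y|)},
\]
using Proposition~\ref{p:princsupp}(1) and (2). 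Hence $\Spec{H}$ is a generalised spectral space.

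For the ``spectral iff strong unit'' assertion, I would invoke the following general fact about Stone duals: if $D$ is a distributive lattice with a minimum, then $X(D)$ is compact if, and only if, $D$ has a maximum. One direction is immediate, since as soon as $\top \in D$ exists the basic open $\widehat{\top}$ of \eqref{eq:widehat} coincides with $X(D)$, and basic opens are compact. Conversely, if $X(D)$ is compact, then, because every prime ideal of $D$ is proper, the family $\{\,\widehat{a} \mid a \in D\,\}$ covers $X(D)$; compactness then yields $X(D) = \widehat{t}$, where $t$ is the join of the finitely many elements extracted from a finite subcover. Since no prime ideal of $D$ contains $t$, the distributive-lattice prime ideal theorem forces $t$ to be the maximum of $D$.

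Applying the lemma with $D = \Conp{H}$, I conclude that $\Spec{H}$ is compact if, and only if, $\Conp{H}$ has a maximum. A maximum of $\Conp{H}$ is necessarily a principal convex $\ell$\nbd{-}subgroup $\Ksg{(u)}$ containing every $\Ksg{(x)}$, $x \in H$, i.e.\ satisfying $\Ksg{(u)} = H$; by Proposition~\ref{p:princcon}(1) this is exactly the condition that $|u|$ be a strong unit of $H$. The main obstacle I anticipate is the ``no prime ideal contains $t$ implies $t = \top$'' step of the lattice-theoretic lemma, which rests on the prime ideal theorem for distributive lattices. A more hands-on alternative would extract $u \coloneqq x_1 \vee \cdots \vee x_n \in H^+$ directly from a finite subcover of $\Spec{H}$ by sets $\Supp{(x_i)}$ (using Proposition~\ref{p:princsupp}(1) to replace each $x_i$ by $|x_i|$) and invoke the standard Zorn-type fact that every proper convex $\ell$\nbd{-}subgroup of $H$ is contained in a prime subgroup to conclude $\Ksg{(u)} = H$.
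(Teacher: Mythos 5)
Your proof is correct and follows essentially the same route as the paper: both rest on Theorem~\ref{t:specdual} together with the classical fact that the Stone dual of a distributive lattice with minimum is a generalised spectral space that is compact precisely when the lattice has a maximum, and both then identify a maximum of $\Conp{H}$ with $\Ksg{(u)}=H$ for a strong unit $u$. Your ``hands-on alternative'' for the converse direction (extracting $u$ from a finite subcover of $\Spec{H}$ and applying the Zorn-type argument) is in fact exactly the argument the paper gives.
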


\begin{proof}
It is a classical result that the Stone dual space of a distributive lattice $D$ with minimum is a generalised spectral space which is compact if, and only if, $D$ has a maximum (see~\cite{Stone38}; cf.\ \cite[Section 2.5]{Graetzer2011},~\cite[Section II.3]{Johnstone86}). Suppose now that $u \in H$ is a strong unit. Then, we have $\Ksg{(u)} = H$. Therefore, the lattice $\Conp{H}$ has a maximum, and its dual space $\Spec{H}$ is compact. Conversely, if $\Spec{H}$ is compact, then $\Spec{H} = \Supp{(u)}$ for some $u \in H$. But then, by the definition of $\Supp{(u)}$, every prime subgroup of $H$ excludes $u$. A standard argument (using Zorn's Lemma) then shows that every proper convex $\ell$-subgroup of $H$ omits $u$. Hence, $u$ is a strong unit.
\end{proof}

A poset is a {\em root system} if the upper bounds of any one of its elements form a chain. 

\begin{proposition}\label{p:rootsys}
For any $\ell$-group $H$ the poset $\Spec{H}$ is a root system, and the specialisation order of the generalised spectral space $\Spec{H}$ coincides with inclusion order.
\end{proposition}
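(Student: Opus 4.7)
The plan is to handle the two assertions in order. For the root-system property, fix $\p \in \Spec{H}$ and two primes $\q_1, \q_2 \in \Spec{H}$ both containing $\p$. The main tool is the totally ordered lattice quotient $H/\p$, whose lattice operations satisfy $\p x \wedge \p y = \p(x \wedge y)$ and $\p x \vee \p y = \p(x \vee y)$---a fact that follows directly from the sublattice property of $\p$ together with left distributivity of group multiplication over $\wedge$. Assume, for contradiction, that $\q_1$ and $\q_2$ are incomparable, and pick $x \in \q_1 \setminus \q_2$ and $y \in \q_2 \setminus \q_1$ both $\geq \e$ (possible after replacing each by its absolute value, using convexity to see that $|x| \not\in \q_2$ and $|y| \not\in \q_1$). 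By totality of $H/\p$ one may assume $\p x \leq \p y$; unpacking this via the formula above gives $x \leq t(x \wedge y)$ for some $t \in \p$. Since $t \in \p \sst \q_2$ and $\e \leq x \wedge y \leq y \in \q_2$, convexity forces $x \wedge y \in \q_2$, hence $t(x \wedge y) \in \q_2$. A second application of convexity to $\e \leq x \leq t(x \wedge y)$ yields $x \in \q_2$, contradicting the choice of $x$. Thus the primes above $\p$ form a chain, and $\Spec{H}$ is a root system.

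For the specialisation order, I would compute the closure of $\{\, \p \,\}$ in the Zariski topology directly. Every closed set has the form $\Van{(A)}$ for some $A \sst H$, and $\p \in \Van{(A)}$ if and only if $A \sst \p$; therefore the smallest closed set containing $\p$ is $\Van{(\p)} = \{\, \q \in \Spec{H} \mid \p \sst \q \,\}$. Consequently the closure of $\{\, \p \,\}$ equals $\Van{(\p)}$, so $\q$ lies in this closure precisely when $\p \sst \q$, which is exactly the claim that the specialisation order coincides with inclusion.

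The main obstacle is the root-system part, and within it the technical heart is the passage from $\p x \leq \p y$ to the coset-representative inequality $x \leq t(x \wedge y)$; once that lemma on the quotient lattice is in hand, both assertions reduce to short convexity arguments inside $\q_2$.
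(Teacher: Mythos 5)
Your proposal is correct. The second half --- computing the closure of $\{\,\p\,\}$ as $\Van{(\p)}$ and reading off the specialisation order --- is exactly the paper's argument. The difference lies in the root-system part: the paper simply cites the standard reference (Darnel, Theorem 9.8), whereas you supply a self-contained proof. Your argument is sound: the key step $\p x \leq \p y \Rightarrow \p x = \p x \wedge \p y = \p(x\wedge y) \Rightarrow x \leq t(x\wedge y)$ for some $t \in \p$ is legitimate, since the quotient map $H \to H/\p$ is indeed a lattice homomorphism (to see $\p x \wedge \p y = \p(x\wedge y)$ one uses that $\p$ is a sublattice, so that from $z \leq t_1 x$ and $z \leq t_2 y$ with $t_i \in \p$ one gets $z \leq (t_1\vee t_2)(x\wedge y)$ by left distributivity); the two convexity steps inside $\q_2$ and the reduction to positive representatives via absolute values are also correct. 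What you have written out is essentially the standard textbook proof of the cited theorem, so the approaches coincide in substance; your version buys self-containedness at the cost of a few lines, and in particular makes visible that the only properties of $\q_1,\q_2$ used are convexity, the sublattice property, and primeness of $\p$ (via totality of $H/\p$), not primeness of the $\q_i$ themselves.
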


\begin{proof}
It is standard that $\Spec{H}$ is a root system~\cite[Theorem 9.8]{Darnel95}. For the second statement, first note that, for any $\p \in \Spec{H}$, $\p \in \Van{(\p)}$ and every closed set $\Van{(A)}$ that contains $\p$ also contains $\Van{(\p)}$. Therefore, $\Van{(\p)}$ is the closure of $\p$. Further, for $\q \in \Spec{H}$, if $\p \sst \q$, then $\q \in \Van{(\p)}$, that is, $\p \leq \q$ in the specialisation order. Conversely, if the latter holds, then $\q \in \Van{(\p)}$, so that $\p \sst \q$.
\end{proof}

\begin{proof}[Proof of Corollary~\ref{c:top}]
Combine Theorem~\ref{t:homeom}, Corollary~\ref{c:specspec}, and Proposition~\ref{p:rootsys}.
\end{proof}

\begin{remark}Observe that
Corollary~\ref{c:specspec} provides a necessary and sufficient condition for $\Pord{G}{\V}$ to be compact, and hence spectral, namely, the existence of a strong  unit in $\Free{G}{\V}$. This yields a useful sufficient condition for spectrality: \emph{If the partially ordered group $G$ is finitely generated (as a group), then $\Pord{G}{\V}$ is compact, and hence spectral.} Indeed, in this case $\Free{G}{\V}$ is finitely generated (as an $\ell$-group), because it is generated by the image under $\eta$ of a generating set for $G$. But it is well known that an $\ell$-group with finitely many generators $x_1,\dots,x_n$ has $\bigvee_{i=1}^n|x_i|$ as strong unit; hence $\Pord{G}{\V}$ is compact. \end{remark}

\section{Characterisations of right pre-orders for specific varieties}\label{s:var}

For any chain $\Omega$, an $\ell$-subgroup $H$ of $\Aut{\Omega}$ is {\em transitive} (on $\Omega$)---equivalently, $H$ {\em acts transitively on $\Omega$}---if for every $r, s \in \Omega$ there exists $f \in H$ such that $f(r)=s$.

\begin{lemma}\label{l:transitive}
For any partially ordered group $G$ and any right pre-order $C$ on $G$, the $\ell$-group $H_C$ is transitive on the chain $\Omega_C$.
\end{lemma}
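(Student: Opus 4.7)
The plan is to observe that transitivity already holds for the subgroup $\rho_C[G]$ alone, so no use of the lattice operations generating $H_C$ will be needed.

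First I would fix arbitrary elements $[a],[b]\in\Omega_C$, where $a,b\in G$ are chosen representatives. The natural candidate for an element of $H_C$ sending $[a]$ to $[b]$ is $\rho_C(\iv{a}b)$, since by the very definition of $\rho_C$ from Definition~\ref{def:rrr} we have $\rho_C(t)([c])=[ct]$ for all $c,t\in G$. Applying this with $c=a$ and $t=\iv{a}b$ yields
\[
\rho_C(\iv{a}b)([a])=[a\cdot\iv{a}b]=[b],
\]
which is the desired equality. The only thing to check is that $\rho_C(\iv{a}b)$ is a well-defined element of $\Aut{\Omega_C}$, but this is part of the standing setup: $\rho_C$ is a positive group homomorphism $G\to\Aut{\Omega_C}$ as recalled in Section~\ref{s:main}, just before Definition~\ref{def:rrr}.

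Since $\rho_C(\iv{a}b)\in\rho_C[G]\subseteq H_C$, we exhibit an element of $H_C$ taking $[a]$ to $[b]$. As $[a],[b]$ were arbitrary, $H_C$ acts transitively on $\Omega_C$. There is no real obstacle here: the lemma is essentially a reformulation of the definition of $\rho_C$ together with the fact that $G$ acts on its own cosets by right multiplication.
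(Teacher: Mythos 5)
Your argument is correct and coincides with the paper's own proof, which likewise sends $[a]$ to $[b]$ via $R_C(\iv{a}b)$ (the same map you call $\rho_C(\iv{a}b)$). Nothing further is needed.
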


\begin{proof}
For every $a, b \in G$, the equivalence class $[a]$ is sent to $[b]$ by the map $R_C(\iv{a}b)$ as defined in \eqref{eq:Rrr}.
\end{proof}

We call an $\ell$-group {\em representable} if it is a subdirect product of totally ordered groups, and {\em Abelian} if its underlying group is Abelian. The class of representable $\ell$-groups is a variety of which Abelian $\ell$-groups form a subvariety~\cite[Proposition 9.3.3]{KM94}. We write $\Rep$ for the category whose objects are representable $\ell$-groups, and $\A$ for the category of Abelian $\ell$\nbd{-}groups. 

\begin{proposition}\label{p:reptrans1}
Every transitive representable $\ell$-group of order-preserving permutations of some chain is totally ordered.
\end{proposition}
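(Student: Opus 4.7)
The plan is to argue by contradiction using the standard algebraic characterisation of representable $\ell$-groups: an $\ell$-group $H$ is representable if, and only if, for all $x,y\in H^+$ with $x\wedge y=\e$, and for every $g\in H$, one has $\iv g xg \wedge y = \e$ (see, e.g., \cite[Theorem~47.7 or equivalents]{Darnel95}). Geometrically, ``representable'' forbids a disjoint pair from being conjugated into an overlapping pair, and transitivity of $H$ on $\Omega$ gives us plenty of conjugating elements.

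Suppose toward a contradiction that some $h\in H$ is not comparable with $\e$ in $H \leq \Aut(\Omega)$. Recall that in $\Aut(\Omega)$, meets and joins are computed pointwise, so $h\geq \e$ means $h(x)\geq x$ for every $x\in \Omega$. Then the positive part $h^+ \coloneqq h \vee \e$ and negative part $h^- \coloneqq \iv h \vee \e$ both satisfy $h^\pm > \e$ strictly, and $h^+ \wedge h^- = \e$ (a general fact, valid in any $\ell$-group). The strict inequalities translate, via the pointwise description, to the existence of points $p,q\in\Omega$ with $h(p)>p$ (so $h^+(p)>p$) and $h(q)<q$ (so $h^-(q) = \iv h(q) > q$).

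Now invoke transitivity to pick $g\in H$ with $g(q)=p$. Since $H$ is representable and $h^+ \wedge h^- = \e$, the cited characterisation gives
\[
\iv g h^+ g \,\wedge\, h^- \;=\; \e.
\]
Evaluating the left-hand side at $q$, and using that $\iv g$ is order-preserving together with $h^+(p)\geq h(p)>p=g(q)$:
\[
(\iv g h^+ g)(q) \;=\; \iv g\bigl(h^+(p)\bigr) \;>\; \iv g(g(q)) \;=\; q,
\]
while also $h^-(q) > q$. The pointwise meet therefore satisfies $(\iv g h^+ g \wedge h^-)(q) > q$, contradicting $\iv g h^+ g \wedge h^- = \e$. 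Hence every $h\in H$ is comparable with $\e$, which is exactly to say that $H$ is totally ordered.

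The only delicate point is citing or recalling the right characterisation of representability; everything else is a two-line computation once the ``witness points'' $p,q$ are chosen and transitivity is used to line them up via conjugation. I do not expect any real obstacle beyond pinning down the reference (the result is in \cite[Chapter 10]{BigardEtAl1977} and \cite{Darnel95}).
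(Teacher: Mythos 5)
Your argument is correct and complete. Note, however, that the paper does not prove this proposition at all: its ``proof'' is the single citation \cite[Theorem 9.3.5]{KM94}, so what you have written is a self-contained replacement for that reference (and it is essentially the standard argument one finds there). Two small points to tidy up. First, the characterisation of representability you invoke---$x\wedge y=\e$ with $x,y\in H^+$ implies $\iv{g}xg\wedge y=\e$ for all $g$---is exactly the statement that polars are normal, which is the paper's own Proposition~\ref{p:polnorm} (citing \cite[Proposition 47.1]{Darnel95}); you can lean on that rather than hunting for ``Theorem 47.7''. Second, the paper fixes the convention that $f\cdot g$ in $\Aut(\Omega)$ means ``$g$ after $f$'', whereas your computation $(\iv{g}h^+g)(q)=\iv{g}(h^+(g(q)))$ reads the product in the opposite (usual composition) order; under the paper's convention you should instead choose $g$ with $g(p)=q$, or equivalently conjugate by $\iv{g}$, after which the same two-line evaluation at $q$ goes through verbatim. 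Neither point affects the substance: the decomposition into $h^+=h\vee\e$ and $h^-=\iv{h}\vee\e$ with $h^+\wedge h^-=\e$, the extraction of witness points $p,q$ from the pointwise order, and the use of transitivity to line them up is exactly the right mechanism, and the contradiction with normality of polars is genuine.
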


\begin{proof}
\cite[Theorem 9.3.5]{KM94}.
\end{proof}

\begin{theorem}\label{t:reptrans2}
For any partially ordered group $G$ and any right pre-order $C$ on $G$, the following are equivalent.
\begin{enumerate}

\item The right pre-order $C$ is in $\Pord{G}{\Rep}$.

\item The $\ell$-group $H_C$ is totally ordered.

\item For every $a \in G$, either $ba\iv{b} \in C$ for every $b \in G$, or $ba\iv{b} \in \iv{C}$ for every $b \in G$.

\end{enumerate}
\end{theorem}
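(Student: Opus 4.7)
I plan to establish the cycle $(2) \Rightarrow (1) \Rightarrow (2) \Rightarrow (3) \Rightarrow (2)$, with the last being the only substantial step. The direction $(2) \Rightarrow (1)$ is immediate since a totally ordered $\ell$-group is trivially a subdirect product of totally ordered groups, hence representable. For $(1) \Rightarrow (2)$, Lemma~\ref{l:transitive} says that $H_C$ acts transitively on $\Omega_C$, and under the assumption $H_C \in \Rep$ Proposition~\ref{p:reptrans1} forces $H_C$ to be totally ordered.

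For $(2) \Rightarrow (3)$, I first translate the conditions into pointwise statements on $\Omega_C$. From the definition of $\leq_C$ one checks that $\rho_C(a) \geq \e$ in $\Aut{\Omega_C}$ iff $[ba] \geq_C [b]$ for all $b \in G$, iff $ba\iv{b} \in C$ for every $b \in G$, and dually $\rho_C(a) \leq \e$ iff $ba\iv{b} \in \iv{C}$ for every $b \in G$. If $H_C$ is totally ordered, each $\rho_C(a)$ is comparable to $\e$, which is precisely (3).

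The main step is $(3) \Rightarrow (2)$. By Lemma~\ref{l:unitproperties} applied inside $H_C$ (whose generating subgroup is $\rho_C[G]$), every $f \in H_C$ admits a normal form
\[
f \;=\; \bigwedge_{i \in I} \bigvee_{j \in J_i} \rho_C(a_{ij})
\]
with $I$ and the $J_i$ finite and $a_{ij} \in G$. By the translation above, condition (3) partitions $G$ (non-disjointly) into the set $P$ of $a$ with $\rho_C(a) \geq \e$ and the set $N$ of $a$ with $\rho_C(a) \leq \e$. I would then run a case analysis on the pointwise action of $f$ at an arbitrary $[b] \in \Omega_C$. If for every $i \in I$ there is some $j \in J_i$ with $a_{ij} \in P$, then for each $b$ one has $\bigvee_j [ba_{ij}] \geq [b]$ for every $i$, whence $f([b]) \geq [b]$ and so $f \geq \e$. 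Otherwise, there is $i^* \in I$ with $a_{i^*j} \in N$ for every $j \in J_{i^*}$; then $\bigvee_j [ba_{i^*j}] \leq [b]$ for each $b$, and $f([b]) \leq \bigvee_j [ba_{i^*j}] \leq [b]$, so $f \leq \e$. Hence every $f \in H_C$ is comparable to $\e$, and $H_C$ is totally ordered.

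The substantive obstacle is isolated in $(3) \Rightarrow (2)$: one must leverage the fact that the chain $\Omega_C$ lets us replace the suprema and infima of the normal form by concrete comparisons at each point. Everything else reduces to direct translation or to the already-cited results on transitive representable $\ell$-groups of automorphisms.
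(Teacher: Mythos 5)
Your proof is correct and follows essentially the same route as the paper: the equivalence of (1) and (2) is obtained exactly as you do, from Lemma~\ref{l:transitive} together with Proposition~\ref{p:reptrans1}, and the equivalence of (2) and (3) comes from translating comparability of each $\rho_C(a)$ with $\e$ into the conjugacy condition. Your explicit normal-form case analysis for (3)~$\Rightarrow$~(2) simply spells out the step the paper compresses into the bare assertion that $H_C$ is a chain if, and only if, each generator is comparable with the identity.
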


\begin{proof}
For (1) $\Leftrightarrow$ (2), combine Lemma~\ref{l:transitive} and Proposition~\ref{p:reptrans1}.

For (2) $\Leftrightarrow$ (3), note that $H_C$ is a chain if, and only if, for every $a \in G$, either $[t] \leq_C [ta]$ for every $t \in G$, or $[ta] \leq_C [t]$ for every $t \in G$. Equivalently, for every $a \in G$, either $t \preceq_C ta$ for every $t \in G$, or $ta \preceq_C t$ for every $t \in G$, that is, either $\e \preceq_C ta\iv{t}$ for every $t \in G$, or $ta\iv{t} \preceq_C \e$ for every $t \in G$. 
\end{proof}

\begin{theorem}\label{t:reptrans3}
For any partially ordered group $G$ and any right pre-order $C$ on~$G$, the following are equivalent.
\begin{enumerate}

\item The right pre-order $C$ is in $\Pord{G}{\A}$.

\item The $\ell$-group $H_C$ is totally ordered Abelian.

\item For every $a, b \in G$, $[\iv{a}\iv{b}ab] = [\e]$ in $\Omega_C$.

\end{enumerate}
\end{theorem}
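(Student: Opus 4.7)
The plan is to establish the implications $(1)\Rightarrow(2)$, $(2)\Rightarrow(3)$, and $(3)\Rightarrow(2)$, together with the trivial $(2)\Rightarrow(1)$. For $(1)\Rightarrow(2)$, I would observe that $\A$ is a subvariety of $\Rep$, so $\Pord{G}{\A}\sst\Pord{G}{\Rep}$; Theorem~\ref{t:reptrans2} then guarantees that $H_C$ is totally ordered, and by assumption it already lies in $\A$. For $(2)\Rightarrow(3)$, Abelianness of $H_C$ together with the homomorphism property of $R_C$ gives
\[
R_C(ab)=R_C(a)R_C(b)=R_C(b)R_C(a)=R_C(ba)
\]
in $H_C$; evaluating both sides at the class $[\iv{a}\iv{b}]\in\Omega_C$ yields $[\iv{a}\iv{b}ab]=[\iv{a}\iv{b}ba]=[\iv{a}a]=[\e]$, which is (3).

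The substantive step is $(3)\Rightarrow(2)$, and the main obstacle I anticipate is that condition~(3) only constrains commutators evaluated at the base point, whereas Lemma~\ref{l:preord}, which is the natural tool for producing a totally ordered group structure on $\Omega_C$, applies only to bi-invariant pre-orders. My plan is to first upgrade $C$ to a member of $\BOrd{(G)}$ directly from~(3). For any $c\in C$ and $t\in G$, substituting $a\mapsto\iv{t}$ and $b\mapsto\iv{c}$ into the commutator identity (3) gives $tc\iv{t}\iv{c}\in K$, where $K\coloneqq C\cap\iv{C}$ is the equivalence class of $\e$, a subgroup of $G$ that lies inside $C$. Right-multiplying by $c$ and using that $C$ is a submonoid yields $tc\iv{t}\in C$; translating back to $\preceq_C$, this is precisely left-invariance of the right pre-order.

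Once $C$ is known to be a pre-order, Lemma~\ref{l:preord} identifies $\Omega_C$ with $H_C$ as one and the same totally ordered group, with operation $[a][b]=[ab]$. In this group, (3) reads $\iv{[a]}\iv{[b]}[a][b]=[\e]$, so the generators $\{\,[a]\mid a\in G\,\}$ pairwise commute, forcing $\Omega_C$, and hence $H_C$, to be Abelian. Combined with being totally ordered, this delivers (2). The crux of the whole argument is the bi-invariance step: recognising the correct substitution into (3) and exploiting $K\sst C$ to reach $tc\iv{t}\in C$ in a single submonoid multiplication; once that hurdle is cleared, everything else reduces to Lemma~\ref{l:preord} and the evident group-theoretic reading of (3).
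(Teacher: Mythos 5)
Your proof is correct, and while the easy implications match the paper's in substance, your handling of the key converse direction is genuinely different. For $(1)\Rightarrow(2)$ and $(2)\Rightarrow(3)$ you do essentially what the paper does (the paper phrases $(2)\Rightarrow(3)$ via the equivalence ``$H_C$ Abelian iff $R_C[G]$ Abelian'', cited from Conrad, but only the trivial half of that equivalence is needed there, as your evaluation of $R_C(ab)=R_C(ba)$ at $[\iv{a}\iv{b}]$ shows). The divergence is in the converse: the paper proves $(3)\Rightarrow(1)$ directly, deducing $[tab]=[tba]$ for all $t$ from $[ab]=[ba]$ together with right invariance, concluding that $R_C[G]$ is Abelian, and then invoking the nontrivial half of the cited fact that $H_C$ is Abelian exactly when $R_C[G]$ is; totality of the order is then recovered by running $(1)\Rightarrow(2)$ again. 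You instead prove $(3)\Rightarrow(2)$ by first extracting normality of the cone from the commutator identity---the substitution $a\mapsto\iv{t}$, $b\mapsto\iv{c}$ giving $tc\iv{t}\iv{c}\in C\cap\iv{C}\sst C$, whence $tc\iv{t}=(tc\iv{t}\iv{c})c\in C$, is correct and is exactly left invariance---and then applying Lemma~\ref{l:preord} to identify $H_C$ with the totally ordered group $\Omega_C$, in which condition (3) literally asserts that all elements commute. Your route is self-contained (Lemma~\ref{l:preord} already collapses $H_C$ to $\rho_C[G]$, so no external fact about $\ell$-groups generated by commuting order-automorphisms is required, and totality comes for free), at the cost of the extra normality step, which the paper's argument never needs.
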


\begin{proof}
For (1) $\Rightarrow$ (2), if $C$ is in $\Pord{G}{\A}$, the $\ell$-group $H_C$ is Abelian, and totally ordered by Lemma~\ref{l:transitive}.

For (2) $\Rightarrow$ (3), observe that $H_C$ is Abelian if, and only if, $R_C[G]$ is Abelian (cf.\ \cite[1.1]{Conrad1970}). Thus if, and only if, for every $a, b, t \in G$, $[tab] = [tba]$ in $\Omega_C$. The latter entails $[ab] = [ba]$ for every $a, b \in G$, that is, $[ab\iv{a}\iv{b}] = [\e]$,  for every $a, b \in G$.

For (3) $\Rightarrow$ (1), pick a right pre-order $C$ on $G$ satisfying $[\iv{a}\iv{b}ab] = [\e]$, for every $a, b \in G$. We show that $[tab] = [tba]$ in $\Omega_C$ for every $a, b, t \in G$. Indeed, since $[ab] = [ba]$ and $C$ is right invariant, also $[abt] =[bat]$, for every $t \in G$. By using the assumption (3) again, $[bat]=[tba]$ and $[abt] = [tab]$. Thus, $[tab]=[tba]$.
\end{proof}
\begin{remark}\label{rem:synt} The class of right pre-orders on groups is elementary---that is, axiomatisable in first-order logic---in the language of partially ordered groups. By Theorems~\ref{t:reptrans2} and~\ref{t:reptrans3}, so are the notions of `representable' and `Abelian' right pre-orders on groups.  (For a related, recent connection between lattice-groups and logic, see~\cite{ColacitoMetcalfe2019}.)
\end{remark}
\begin{problem}\label{q:synt}For which varieties of $\ell$-groups is the corresponding class of right pre-orders on groups, as provided by Definition ~\ref{def:vpre}, elementary in the language of partially ordered groups?
\end{problem}
For any partially ordered group $G$ and any variety $\V$ of $\ell$-groups, consider the factorisation of the universal map $\eta \colon G \to \Free{G}{\V}$ given by
\begin{align}\label{eq:factor}
G & \xrightarrow{ \ \zeta \ }\ \eta[G] \xrightarrow{ \ \xi \ } \Free{G}{\V},
\end{align}
where $\eta[G]$ is the group image of $G$ under $\eta$ partially ordered by the restriction of the order on $\Free{G}{\V}$, and $\xi$ is the inclusion map. 

\begin{proposition}\label{p:ontointo}
The group homomorphism $\xi \colon \eta[G] \to \Free{G}{\V}$ from \eqref{eq:factor} is an order embedding satisfying the universal property \eqref{eq:universal}.
\end{proposition}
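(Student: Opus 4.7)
The plan is to split the statement into two independent claims: (i) $\xi$ is an order embedding, and (ii) $\xi$ enjoys the same universal property that $\eta$ does (with $\eta[G]$ in place of $G$).

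For (i), the argument is by definition. Since $\eta[G]$ is endowed with the restriction of the partial order of $\Free{G}{\V}$, and $\xi$ is the set-theoretic inclusion, $\xi$ is injective, positive, and order-reflecting on the range; all three facts are immediate from the definition of the induced order on $\eta[G]$. Hence $\xi$ is a morphism of $\P$ and an order embedding in the sense of Section~\ref{s:main}.

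For (ii), let $p\colon \eta[G]\to H$ be a positive group homomorphism with $H\in \V$. I would form the composite $p\circ \zeta\colon G\to H$; this is positive because $\zeta$ is positive (being the corestriction of $\eta$ to its image) and $p$ is positive. Applying the universal property \eqref{eq:universal} of $\eta$ to $p\circ \zeta$ yields a unique $\ell$-homomorphism $h\colon \Free{G}{\V}\to H$ with $h\circ \eta=p\circ \zeta$. Since $\eta=\xi\circ \zeta$, this reads $(h\circ \xi)\circ \zeta=p\circ \zeta$, and surjectivity of $\zeta$ then gives $h\circ \xi=p$.

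Uniqueness is the mirror image of existence: if $h'\colon \Free{G}{\V}\to H$ is any $\ell$-homomorphism with $h'\circ \xi=p$, then $h'\circ \eta=h'\circ \xi\circ \zeta=p\circ \zeta$, so the uniqueness clause of the universal property of $\eta$ forces $h'=h$. I do not anticipate a genuine obstacle here; the whole argument is a routine diagram chase whose only moving part is making sure that $\zeta$ is both positive and surjective, and both assertions hold by the very construction of the factorisation \eqref{eq:factor}.
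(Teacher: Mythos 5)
Your proof is correct, but it takes a more elementary route than the paper's. Where you verify the universal property of $\xi$ directly — composing a given positive homomorphism $p\colon \eta[G]\to H$ with $\zeta$, invoking the universal property of $\eta$, and cancelling the surjection $\zeta$ on the right to get existence and uniqueness of $h$ with $h\circ\xi=p$ — the paper instead introduces the free object $\Free{\eta[G]}{\V}$ with its own unit $\eta^*\colon \eta[G]\to\Free{\eta[G]}{\V}$, produces the two comparison $\ell$-homomorphisms $h\colon\Free{G}{\V}\to\Free{\eta[G]}{\V}$ and $k\colon\Free{\eta[G]}{\V}\to\Free{G}{\V}$, and shows they are mutually inverse by the standard uniqueness-of-universal-arrows argument (again using that $\zeta$ is an epimorphism). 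Both arguments rest on exactly the same two ingredients — the positivity and surjectivity of $\zeta$ and the universal property of $\eta$ — so neither is deeper than the other. Your version is shorter and avoids introducing an auxiliary free object; the paper's version has the side benefit of exhibiting the isomorphism $\Free{G}{\V}\cong\Free{\eta[G]}{\V}$ explicitly, which is the form in which the proposition gets used in Remark~\ref{r:pordspec} (though that isomorphism also follows at once from the statement as you prove it). Your treatment of the order-embedding claim matches the paper's, which likewise dismisses it as evident from the construction of the induced order on $\eta[G]$.
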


\begin{proof}
It is evident by construction that $\xi$ is an order embedding. Consider the following commutative diagrams
\[
\begin{tikzcd}
G \arrow[rd, "\eta^* \circ \, \zeta"'] \arrow[r, "\eta"] & \Free{G}{\V} \arrow[d,  "h", "!"'] & \text{ and } & \eta[G] \arrow[rd, "\xi"'] \arrow[r, "\eta^*"] & \Free{\eta[G]}{\V} \arrow[d,  "k", "!"']\\
& \Free{\eta[G]}{\V} && & \Free{G}{\V}
\end{tikzcd}
\]
We prove that $h \circ k$ and $k \circ h$ are, respectively, the identity map on $\Free{\eta[G]}{\V}$ and the identity map on $\Free{G}{\V}$. Indeed, 
\[
h \circ k \circ \eta^* \circ \zeta = h \circ \xi \circ \zeta = h \circ \eta = \eta^* \circ \zeta,
\] 
and since $\zeta$ is an epimorphism, we get $h \circ k \circ \eta^* = \eta^*$. Similarly, 
\[
k \circ h \circ \eta = k \circ h \circ \xi \circ \zeta = k \circ \eta^* \circ \zeta = \xi \circ \zeta = \eta.
\]
By the universal property of $\eta$ and $\eta^*$, we infer the thesis.
\end{proof}

\begin{remark}\label{r:pordspec}
Proposition~\ref{p:ontointo} entails that the space $\Pord{\eta[G]}{\V}$ is homeomorphic to  $\Spec{\Free{G}{\V}}$ and hence, by Theorem~\ref{t:main}, to the space $\Pord{G}{\V}$.
\end{remark}

For a partially ordered group $G$, a {\em right order} on $G$ is a right invariant proper total order on $G$ extending $G^+$, and an {\em order} on $G$ is a right order on~$G$ which is also left invariant. We call {\em (right) orderable} a partially ordered group $G$ that can be equipped with a (right) order. We say that a partially ordered group $G$ is {\em isolated} if $a^n \in G^+$ for some $n \in \N \setminus \{\, 0 \,\}$ implies $a \in G^+$, for every $a \in G$. As usual, a group $G$ for which $a^n = \e$ for some $n \in \N \setminus \{\, 0 \,\}$ implies $a = \e$, for every $a \in G$, is called {\em torsion-free}. 

\begin{lemma}\label{l:intersection}
Suppose $\V$ is the variety of all $\ell$-groups (respectively, representable $\ell$-groups). For any partially ordered group $G$, the universal map $\eta \colon G \to \Free{G}{\V}$ is an order embedding if, and only if, the positive cone $G^+$ is the intersection of the right orders (respectively, the orders) on $G$; for the variety $\V$ of Abelian $\ell$-groups, $\eta \colon G \to \Free{G}{V}$ is an order embedding if, and only if, $G$ is an isolated partially ordered Abelian group.
\end{lemma}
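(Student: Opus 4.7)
The strategy is to derive the lemma from the spectral correspondence of Theorem~\ref{t:main}, via the following central observation: for any variety $\V$ and any right pre-order $C \in \Pord{G}{\V}$, positivity of the homomorphism $h_C \circ \eta = R_C$ shows that $\eta(a)\geq \e$ in $\Free{G}{\V}$ forces $R_C(a) \geq \e$ in $H_C$, which by the definition of the action of $R_C$ on $\Omega_C$ is equivalent to $a \in C$. Conversely, the classical fact that $\bigcap \Spec H = \{\,\e\,\}$ in any $\ell$\nbd{-}group $H$, coupled with Theorem~\ref{t:main}, yields $\eta(a) \geq \e$ if and only if $a \in \bigcap \Pord{G}{\V}$. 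Since order\nbd{-}reflection alone suffices for $\eta$ to be an order embedding ($\eta(a) = \e$ forces $a \geq \e$ and $\iv{a} \geq \e$, hence $a = \e$), I deduce the unifying statement \emph{$\eta$ is an order embedding if and only if $G^+ = \bigcap \Pord{G}{\V}$}. For the representable case, the refinement $\bigcap \Specn H = \{\,\e\,\}$ (since representable $\ell$\nbd{-}groups are subdirect products of chains) similarly gives: $\eta$ is an order embedding iff $G^+ = \bigcap \Bord{G}{\Rep}$.

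For $\V = \L$, it remains to replace $\Pord{G}{\L}$ by the subcollection of right orders on $G$; and analogously for $\V = \Rep$ (replacing $\Bord{G}{\Rep}$ by bi-orders). This amounts to showing that every right pre-order (respectively, bi-pre-order) contains an inclusion\nbd{-}minimal one that is in fact a right order (respectively, bi-order). Existence of a minimal element follows from Zorn's lemma: for any chain $\{\,C_i\,\}$ in $\Pord{G}{\V}$, the intersection $D = \bigcap_i C_i$ is again a right pre-order (the totality condition $G = D \cup \iv{D}$ persists because the $C_i$ are nested under inclusion), and $D \in \Pord{G}{\V}$ since $\bigcap_i \kappa(C_i)$ is a prime of $\Free{G}{\V}$ by classical $\ell$\nbd{-}group theory (for a descending chain of primes $\p_i$, primeness of the intersection is proved by the standard cofinality argument on witnesses $x \wedge y = \e$). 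The identification of such minimals with actual right orders is the main technical step: given a minimal $C$ with $N := C \cap \iv{C}\neq \{\,\e\,\}$, one must produce a strictly smaller pre-order by refining the equivalence $\equiv_C$ along the subgroup $N$.

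For the Abelian case, if $\eta$ is an order embedding then $G$ embeds into the abelian $\ell$\nbd{-}group $\Free{G}{\A}$, so $G$ is itself abelian; and $G$ is isolated because any abelian $\ell$\nbd{-}group is isolated --- in additive notation, $(nx)^- = n(x^-)$, and torsion\nbd{-}freeness of abelian $\ell$\nbd{-}groups yields $nx \geq \e \Rightarrow x^- = \e \Rightarrow x \geq \e$. Conversely, if $G$ is abelian and isolated, I would invoke Lorenzen's classical theorem: $G^+$ coincides with the intersection of all total orderings on $G$ extending $G^+$. Each such total ordering $C$ is a bi-order with $H_C \cong \Omega_C$ a totally ordered abelian group, whence $C \in \Bord{G}{\A}$; the central observation applied with $\V = \A$ then concludes $\eta$ is an order embedding. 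The chief obstacles are thus the identification of inclusion\nbd{-}minimal pre-orders with right orders (or bi-orders) in the $\V = \L$ and $\V = \Rep$ cases, and Lorenzen's theorem as an external classical input for the Abelian case.
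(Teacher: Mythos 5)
Your reduction of the statement to the identity $G^+=\bigcap\Pord{G}{\V}$ is correct and is a nice use of the machinery of Theorem~\ref{t:main}: positivity of $h_C\circ\eta=R_C$ gives one inclusion, and the classical fact that $\bigcap\Spec{H}=\{\,\e\,\}$ (together with $\e\wedge\eta(a)\in\p\Leftrightarrow a\in C_\p$) gives the other; the refinement via $\Specn$ for $\Rep$ and the entire Abelian case (with Lorenzen's theorem, i.e.\ \cite[Corollaire A.2.6]{BigardEtAl1977}, as external input) are fine. For the record, the paper does none of this: its proof of the lemma is a bare citation of \cite[Th\'eor\`eme A.2.2 and Note de l'appendice]{BigardEtAl1977} and \cite[1.2]{Conrad1970}, so you are attempting a genuinely self-contained argument.

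The problem is that the step you yourself flag as ``the main technical step'' --- that every $C\in\Pord{G}{\L}$ (resp.\ $\Bord{G}{\Rep}$) contains a right order (resp.\ an order), or equivalently that inclusion-minimal elements are right orders --- is exactly where all the content of the classical theorem lives, and your sketch does not close it. ``Refining $\equiv_C$ along the subgroup $N=C\cap\iv{C}$'' requires a right-invariant total order on $N$ extending $G^+\cap N$ (this is what the paper uses in Theorem~\ref{t:minorders}, where the refining order is the restriction $P\cap N$ of a right order $P$ on $\eta[G]$). The existence of such an order on $N$ is not established by anything in your argument; it is essentially an instance of the statement being proven, and the paper's Theorem~\ref{t:minorders} obtains its $P$ precisely by invoking Lemma~\ref{l:intersection}, so appealing to that theorem here would be circular. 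Note that the claim ``every right pre-order contains a right order'' is simply false for arbitrary groups (a group with torsion can carry right pre-orders, pulled back along a quotient, but no right order), so the hypothesis that $\eta$ is an order embedding must enter this step in an essential way --- and your proposal does not show how. Without either carrying out that construction (e.g.\ \`a la Conrad/Holland, building right orders on $\Aut{\Omega}$ from well-orderings of $\Omega$ and restricting along an order embedding $G\hookrightarrow\Free{G}{\V}\hookrightarrow\Aut{\Omega}$) or conceding the classical citation, the $\L$ and $\Rep$ cases remain unproved.
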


\begin{proof}
For the variety $\L$ of all $\ell$-groups, see~\cite[Th\'eor\`eme A.2.2]{BigardEtAl1977}. For the variety $\Rep$ of representable $\ell$\nbd{-}groups, see~\cite[Note de l'appendice]{BigardEtAl1977}. For the variety $\A$ of Abelian $\ell$-groups, it suffices to observe that the free Abelian $\ell$-group $\Free{G}{\A}$ over $G$ is the free $\ell$-group $\Free{G}{}$ over $G$ if $G$ is Abelian~\cite[1.2]{Conrad1970}.
\end{proof}

\begin{remark}\label{r:isolated}
For a partially ordered Abelian group $G$, being isolated is equivalent to $G^+$ being intersection of the (right) orders that extend it~\cite[Corollaire A.2.6]{BigardEtAl1977}.
\end{remark}

\begin{corollary}\label{c:orderability}
Suppose $\V$ is the variety of all $\ell$-groups (respectively, representable $\ell$-groups). For any group $G$, the universal map $\eta \colon G \to \Free{G}{\V}$ is injective if, and only if, $G$ is right orderable (respectively, orderable); for the variety $\V$ of Abelian $\ell$-groups, $\eta \colon G \to \Free{G}{V}$ is injective if, and only if, $G$ is non-trivial torsion-free Abelian.
\end{corollary}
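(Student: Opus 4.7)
My plan is to deduce Corollary~\ref{c:orderability} from Lemma~\ref{l:intersection} via one auxiliary observation: for each variety $\V\in\{\L,\Rep,\A\}$ and each group $G$ viewed as a partially ordered group with trivial partial order (so $G^+=\{\e\}$), the map $\eta\colon G\to\Free{G}{\V}$ is injective if and only if it is an order embedding. Once this is established, the three cases of the corollary fall out by direct specialisation of Lemma~\ref{l:intersection}.

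To prove the auxiliary observation I would exploit that each of $\L$, $\Rep$, and $\A$ is closed under order reversal: for $H\in\V$, the $\ell$-group $H^{\mathrm{op}}$ with the same group operation but with $\vee$ and $\wedge$ swapped again lies in $\V$---for $\L$ this is immediate, for $\Rep$ any subdirect representation of $H$ as a product of chains dualises to one for $H^{\mathrm{op}}$, and for $\A$ the underlying group is unchanged. Applying this to $H:=\Free{G}{\V}$, the group homomorphism $\eta$ can equally be viewed as a group homomorphism $G\to\Free{G}{\V}^{\mathrm{op}}$, and it is trivially positive because $G$ carries the trivial partial order. The universal property \eqref{eq:universal} then yields a unique $\ell$-homomorphism $h\colon\Free{G}{\V}\to\Free{G}{\V}^{\mathrm{op}}$ with $h\circ\eta=\eta$. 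As an $\ell$-homomorphism into the opposite, $h$ reverses the order on $\Free{G}{\V}$ while fixing each $\eta(a)$ as an element of the underlying set. Hence any $c\in G$ with $\eta(c)\geq\e$ satisfies $\eta(c)=h(\eta(c))\leq\e$, forcing $\eta(c)=\e$ and thus $c=\e$ whenever $\eta$ is injective. Substituting $c:=b\iv{a}$ turns this into the order reflection condition, so $\eta$ is indeed an order embedding.

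With the auxiliary observation in hand, the three cases proceed as follows. For $\V=\L$, Lemma~\ref{l:intersection} gives that $\eta$ is an order embedding if and only if $\{\e\}$ is the intersection of all right orders on $G$; since $C$ is a right order iff $\iv{C}$ is, with $C\cap\iv{C}=\{\e\}$, this is equivalent to the existence of some right order, i.e.\ to $G$ being right orderable. The case $\V=\Rep$ is identical with ``order'' in place of ``right order''. For $\V=\A$, injectivity of $\eta$ embeds $G$ inside the Abelian $\ell$-group $\Free{G}{\A}$, so $G$ is Abelian; the Abelian clause of Lemma~\ref{l:intersection} then yields that $\eta$ is an order embedding iff $G$ is isolated, which for the trivial partial order on $G$ is precisely torsion-freeness. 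The principal obstacle is the auxiliary observation itself---verifying closure of the three varieties under order reversal and carefully applying the universal property to build $h$; the remainder is a routine specialisation of Lemma~\ref{l:intersection}.
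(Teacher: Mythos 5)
Your proof is correct and takes essentially the same route as the paper: specialise Lemma~\ref{l:intersection} to $G$ equipped with the trivial partial order. The paper's proof is a one-line appeal to that lemma which leaves implicit the equivalence, for trivially ordered $G$, of ``$\eta$ injective'' and ``$\eta$ an order embedding''; your order-reversal argument via $\Free{G}{\V}^{\mathrm{op}}$ supplies exactly that bridge and is sound (indeed $x\mapsto\iv{x}$ is an $\ell$-isomorphism $H\cong H^{\mathrm{op}}$, so closure under order reversal holds in every variety, not just the three you check).
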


\begin{proof}
From Lemma~\ref{l:intersection} considering a group $G$ as the partially ordered group $G$ with the trivial order.
\end{proof} 

The isomorphism between the poset of right pre-orders on $G$ and $\RPord{(G)}$ restricts to a bijection between the set of right orders on $G$ and the subset~$\Rord{(G)}$ of pre-cones $C \in \RPord{(G)}$ such that $C \cap \iv{C} = \{\, \e \,\}$. We now topologise~$\Rord{(G)}$  with the subspace topology inherited from $\RPord{(G)}$, with subbase (of clopens) consisting of 
\begin{equation}\label{eq:rotop}
\{\, C \in \Rord{(G)} \mid a \in C \,\}\ \ \text{ and }\ \ \{\, C \in \Rord{(G)} \mid a \not\in C \,\},
\end{equation} 
as $a$ ranges in $G$.
Similarly, the set of orders on $G$ is in bijection with the subset $\Ord{(G)}$ of normal pre-cones $C \in \Rord{(G)}$, and we equip $\Ord{(G)}$ with the subspace topology.

\begin{remark}\label{r:incomp}
For any variety $\V$ of $\ell$-groups, if a right pre-order $C \in \Pord{G}{\V}$ is a right order then $C$ must be inclusion minimal in $\Pord{G}{\V}$. In fact, any proper subset $D \subset C$ would fail the condition $G = D \cup \iv{D}$. Conversely, suppose $C \in \Pord{G}{\V}$ is minimal. Remark~\ref{r:pordspec} provides a natural way to associate to $C$ a minimal element of $\Pord{\eta[G]}{\V}$. However, there is no {\it a priori} reason why the latter should be a right order. We shall presently see that this is the case for the varieties of all $\ell$-groups, of representable $\ell$-groups, and of Abelian $\ell$-groups. 
\end{remark}
\begin{problem}Characterise the varieties $\V$ of $\ell$-groups such that, for all partially ordered groups $G$, the minimal elements of $\Pord{G}{\V}$ are right orders on~$\eta[G]$.
\end{problem}

\begin{proposition}\label{p:representable}
For any $\ell$-group $H$, the following are equivalent.
\begin{enumerate}
\item $H$ is representable.
\item Each minimal prime subgroup is an ideal.
\end{enumerate}
\end{proposition}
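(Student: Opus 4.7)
The plan is to establish both implications by invoking three classical facts from the theory of $\ell$-groups (all found in Darnel's monograph, already used elsewhere in the paper):

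\textbf{(F1)} An $\ell$-group $H$ is representable if and only if, for all $x,y \in H^+$ with $x \wedge y = \e$, one has $gxg^{-1} \wedge y = \e$ for every $g \in H$; equivalently, every polar of $H$ is normal.

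\textbf{(F2)} A prime subgroup $\p$ of $H$ is minimal if and only if for every $x \in \p \cap H^+$ there exists $y \in H^+ \setminus \p$ with $x \wedge y = \e$.

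\textbf{(F3)} The intersection of all minimal prime subgroups of $H$ equals $\{\,\e\,\}$.

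For the direction (1) $\Rightarrow$ (2), I would fix a minimal prime subgroup $\p$, an element $x \in \p$, and $g \in H$, and argue that $gxg^{-1} \in \p$. By (F2) applied to $|x| \in \p \cap H^+$, pick $y \in H^+ \setminus \p$ with $|x| \wedge y = \e$. Since $H$ is representable, (F1) yields $g|x|g^{-1} \wedge y = \e$, i.e.\ $|gxg^{-1}| \wedge y = \e$. This element lies in $\p$ (being $\e$), so primeness forces $|gxg^{-1}| \in \p$ or $y \in \p$; the latter is excluded by choice of $y$, and the former entails $gxg^{-1} \in \p$ by convexity. Hence $g\p g^{-1} \sst \p$ for every $g$, and $\p$ is normal, i.e.\ a prime ideal.

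For (2) $\Rightarrow$ (1), write $\Min{H}$ for the set of minimal prime subgroups. By hypothesis $\Min{H} \sst \Specn{H}$, so by (F3),
\[
\{\,\e\,\} \;=\; \bigcap \Min{H} \;\supseteq\; \bigcap \Specn{H}.
\]
Hence the canonical $\ell$-homomorphism $H \to \prod_{\p \in \Specn{H}} H/\p$ is injective. Since $H/\p$ is a totally ordered group for each $\p \in \Specn{H}$, this exhibits $H$ as a subdirect product of totally ordered groups, so $H$ is representable.

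The main obstacle is the first direction, where one must combine (F1) and (F2) carefully; once these are in place the argument is a short primeness-and-convexity manipulation. The reverse direction is essentially a bookkeeping exercise given (F3). Since all three facts are classical and documented in \cite{Darnel95} (see e.g.\ the sections on polars, minimal primes, and representable $\ell$-groups), the proof reduces to the clean orchestration above rather than any new calculation.
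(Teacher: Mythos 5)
Your proof is correct. Note that the paper does not actually prove this proposition --- it simply cites \cite[Proposition 47.1]{Darnel95} --- so there is no argument of the authors' to compare against; what you have written is a sound, self-contained derivation of the standard equivalence. Your three ingredients are all legitimate and indeed available within the paper itself: (F1) is Proposition~\ref{p:polnorm} (plus the elementary observation that normality of all polars is the same as preservation of disjointness under conjugation), (F2) is a reformulation of Proposition~\ref{p:minprimes}, and (F3) is used explicitly in the proof of Theorem~\ref{t:bapolars}. The forward direction is exactly right: from $|x|\wedge y=\e$ with $y\notin\p$ you get $|gx\iv{g}|\wedge y=g|x|\iv{g}\wedge y=\e\in\p$, primeness excludes $y\in\p$, and convexity converts $|gx\iv{g}|\in\p$ into $gx\iv{g}\in\p$. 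The reverse direction is also fine, with the minor caveats that one should observe that the coordinate maps $H\to H/\p$ are surjective (so the image really is a subdirect product of totally ordered groups) and that the degenerate case $H=\{\e\}$, where $\Spec{H}=\emptyset$, is harmless. In short: a correct and appropriately economical proof of a result the paper delegates to the literature.
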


\begin{proof}
See, e.g.,\ \cite[Proposition 47.1]{Darnel95}.
\end{proof}

\begin{theorem}\label{t:minorders}
Suppose $\V$ is the variety of all $\ell$-groups (respectively, representable  or Abelian $\ell$-groups). For any partially ordered group $G$, the minimal layer of $\Pord{G}{\V}$ is homeomorphic to the space of right orders (respectively, orders) on $\eta[G]$.
\end{theorem}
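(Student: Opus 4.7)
The plan is to use Theorem~\ref{t:main} together with Remark~\ref{r:pordspec} to transport the claim from $\Pord{G}{\V}$ to $\Pord{\eta[G]}{\V}$, and then to show that the inclusion-minimal elements of the latter coincide, as topological subspaces of $\RPord(\eta[G])$, with $\Rord(\eta[G])$ when $\V = \L$ and with $\Ord(\eta[G])$ when $\V \in \{\, \Rep, \A \,\}$. One direction is handed to us by Remark~\ref{r:incomp}, provided we first check that any right order (resp.\ order) on $\eta[G]$ belongs to $\Pord{\eta[G]}{\V}$: this is automatic for $\V = \L$, and for the other two varieties it follows from Lemma~\ref{l:preord}, since the associated $H_C$ is then a totally ordered (Abelian, when $\V = \A$) $\ell$-group and hence lies in $\V$.

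For the converse I would argue by contradiction: suppose $C \in \Pord{\eta[G]}{\V}$ is inclusion-minimal but $K := C \cap \iv{C} \neq \{\, \e \,\}$, and construct a strictly smaller $D \in \Pord{\eta[G]}{\V}$. When $\V \in \{\, \Rep, \A \,\}$, Proposition~\ref{p:representable} forces $\kappa(C)$ to be a prime ideal, so $C \in \Bord{\eta[G]}{\V}$ by Theorem~\ref{t:preordersn}, and hence $K$ is normal in $\eta[G]$. The refinement will be $D := (C \setminus K) \cup K^+_{\leq_K}$, where $\leq_K$ is a right order on $K$ (for $\V = \L$) or an order on $K$ (for $\V \in \{\, \Rep, \A \,\}$) extending $K^+ := K \cap \eta[G]^+$. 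To produce $\leq_K$, I would combine Proposition~\ref{p:ontointo}---which makes the unit $\eta[G] \to \Free{\eta[G]}{\V}$ an order embedding---with Lemma~\ref{l:intersection} (together with Remark~\ref{r:isolated} in the Abelian case), obtaining $\eta[G]^+$ as the intersection of the right orders (resp.\ orders) on $\eta[G]$; then $\leq_K$ is taken to be the restriction to $K$ of any one such global order. The checks that $D \cup \iv{D} = \eta[G]$, that $\eta[G]^+ \subseteq D$, that $D$ is a submonoid, and that $D \subsetneq C$ are routine: they use that the right $\eta[G]$-action on $\Omega_C$ is order-preserving (for products that cross $K$), the right-invariance of $\leq_K$ (for products within $K$), and the fact that $K^+_{\leq_K} \subsetneq K$ as soon as $K \neq \{\, \e \,\}$. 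Finally, $H_D \in \V$ is automatic for $\V = \L$; for $\V \in \{\, \Rep, \A \,\}$ one observes that $D \cap \iv{D} = \{\, \e \,\}$ by construction, so $D$ is an order on $\eta[G]$ and $H_D \cong \Omega_D$ is, by Lemma~\ref{l:preord}, a totally ordered (Abelian when $\V = \A$) group.

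The step I anticipate as the main obstacle is verifying, when $\V \in \{\, \Rep, \A \,\}$, that $D$ is left-invariant: this requires $\leq_K$ to be preserved by conjugation by every element of $\eta[G]$, not merely by elements of $K$, and it is precisely for this reason that $\leq_K$ must be extracted as the restriction of a \emph{global} order on $\eta[G]$, since inner automorphisms of $\eta[G]$ automatically preserve any such global order. Once the bijection between minimal pre-cones and right orders (resp.\ orders) is established, the topological statement reduces to a routine subbase comparison: for any right order $C$ and any $a \in \eta[G] \setminus \{\, \e \,\}$ one has $a \in C$ if and only if $\iv{a} \notin C$, so $\Pa(a)$ restricted to right orders equals $\{\, C \mid a \in C \,\}$, while $\{\, C \mid a \notin C \,\}$ coincides with the restriction of $\Pa(\iv{a})$, matching the subbase of~\eqref{eq:rotop}.
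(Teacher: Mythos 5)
Your proposal is correct and follows essentially the same route as the paper: your refinement $D=(C\setminus K)\cup(P\cap K)$ is precisely the positive cone of the lexicographic order the paper defines in \eqref{eq:minro}, the global order $P$ is obtained from Proposition~\ref{p:ontointo} together with Lemma~\ref{l:intersection} in both arguments, and the reduction of the representable case to pre-orders via Proposition~\ref{p:representable} and Theorem~\ref{t:preordersn} is identical. The only cosmetic differences are that the paper argues directly (every right pre-order contains a right order) rather than by contradiction, and treats the Abelian case by reducing to the unrestricted case via Conrad's observation that $\Free{\eta[G]}{\A}=\Free{\eta[G]}{}$ for Abelian $\eta[G]$, whereas you handle it alongside the representable case; your explicit subbase comparison for the topologies is a welcome detail the paper leaves implicit.
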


\begin{proof}
For $\eta \colon G \to \Free{G}{}$, the space of right orders on $\eta[G]$ is nonempty by Lemma~\ref{l:intersection}. By Remark~\ref{r:incomp}, the space of right orders on $\eta[G]$ is made of minimal elements of $\Pord{\eta[G]}{}$. We now show that every $C \in \RPord{(\eta[G])}$ extends a right order. Let $P$ be a right order on $\eta[G]$, and $P(C)$ be its restriction $P \cap (C \cap \iv{C})$. Consider the binary relation on $\eta[G]$ defined by: 
\begin{equation}\label{eq:minro}
a \leq b \quad \Longleftrightarrow \quad [a] < [b] \text{ or } ([a] = [b] \text{ and } e \leq_{P(C)} b\iv{a}), \text{ for }a,b \in G.
\end{equation}
Then, the relation $\leq$ is a right order on $\eta[G]$ that extends $\eta[G]^+$, and $a \leq b$ implies $a \preceq_C b$. It is elementary that $\leq$ is a total order. Suppose now that $a \leq b$ because $[a] < [b]$. But then, $a \prec_C b$, and hence, $at \prec_C bt$, which means $[at] < [bt]$. On the other hand, if $[a] = [b]$ and $\e \leq_{P(C)} b\iv{a}$, then $[ac] = [bc]$ and $\e \leq_{P(C)} bc\iv{c}\iv{a}$. Finally, it is clear that if $a \leq b$, then $a \preceq_C b$.

For $\eta \colon G \to \Free{G}{\Rep}$, Lemma~\ref{l:intersection} entails that the space of orders on $\eta[G]$ is nonempty, and by Remark~\ref{r:incomp}, the space of orders on $\eta[G]$ is made of minimal elements of $\Pord{\eta[G]}{\Rep}$. We pick an order $P$ on $\eta[G]$ and its restriction $P(C)$, and show that if $C \in \Bord{\eta[G]}{\Rep}$, the binary relation $\leq$ defined in \eqref{eq:minro} is an order on $\eta[G]$ included in $C$. We can then conclude using Proposition~\ref{p:representable} and Theorem~\ref{t:preordersn}, from which we obtain that every minimal element $C$ of~$\Pord{\eta[G]}{\Rep}$ is a pre-order. Hence, we only need to prove that the right order $\leq$ that we obtain is also left invariant. For this, suppose that $a \leq b$ because $[a] < [b]$. Then, this means that $a \prec_C b$, and hence, $\e \prec_C b\iv{a}$. Now, by Theorem~\ref{t:reptrans2}, $\e \preceq_C cb\iv{a}\iv{c}$. If we had also $cb\iv{a}\iv{c} \preceq_C \e$, we would get a contradiction with $b\iv{a} \npreccurlyeq_C \e$ since $C$ is a pre-order. Therefore, $ca \prec_C cb$, and hence, $[ca] < [cb]$. Assume now that $[a] = [b] \text{ and } \e \leq_{P(C)} b\iv{a}$. If $b\iv{a} \in P(C)$, also $cb\iv{a}\iv{c} \in P(C)$. The latter entails $[ca] = [cb]$ and $e \leq_{P(C)} cb\iv{a}\iv{c}$. Therefore, if $a \leq b$, also $ca \leq cb$, and the right order $\leq$ is in fact an order on $\eta[G]$.

For $\eta \colon G \to \Free{G}{\A}$, observe that $\eta[G]$ is an isolated partially ordered Abelian group and hence, from Remark~\ref{r:incomp}, $\eta[G]^+$ is the intersection of the orders that extend it. Moreover, the free $\ell$-group $\Free{\eta[G]}{}$ over $\eta[G]$ is the free Abelian $\ell$-group $\Free{\eta[G]}{\A}$ over $\eta[G]$~\cite[1.2]{Conrad1970}. Thus, $\Pord{\eta[G]}{\A}$ is $\RPord{(\eta[G])}$ and hence, the minimal layer of $\Pord{\eta[G]}{\A}$ is the space of (right) orders on~$\eta[G]$.

The results now follow from Remark~\ref{r:pordspec}.
\end{proof}

For an $\ell$-group $H$, we write $\Min{H}$ for the set of minimal prime subgroups of $H$, and we topologise it with the subspace topology from $\Spec{H}$. We write $\Supp_{\m}{(A)}$ (respectively, $\Van_{\m}{(A)}$) for open subsets (respectively, closed subsets) of $\Min{H}$ with $A$ ranging over arbitrary subsets of $H$. 

\begin{corollary}\label{c:minetag}
Suppose $\V$ is the variety of all $\ell$-groups (respectively, representable  or Abelian $\ell$-groups). For any partially ordered group $G$, $\Min{\Free{G}{\V}}$ is homeomorphic to the space of right orders (respectively, orders) on $\eta[G]$.
\end{corollary}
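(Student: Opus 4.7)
The proof should be essentially a one-line chase through the results already established. The plan is to combine Theorem~\ref{t:main} with Theorem~\ref{t:minorders}, observing that inclusion-preserving homeomorphisms must restrict to their minimal layers.

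First, by Theorem~\ref{t:main} the maps $\kappa$ and $\pi$ are mutually inverse, inclusion-preserving homeomorphisms between $\Spec{\Free{G}{\V}}$ and $\Pord{G}{\V}$. Since $\pi$ is a bijection that preserves inclusion, and its inverse $\kappa$ also preserves inclusion, $\pi$ is in fact an order isomorphism for set-theoretic inclusion. Therefore $\pi$ sends the minimal elements of $\Spec{\Free{G}{\V}}$ bijectively onto the minimal elements of $\Pord{G}{\V}$, and equipping both sides with the subspace topology yields a homeomorphism between $\Min{\Free{G}{\V}}$ and the minimal layer of $\Pord{G}{\V}$.

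Next, invoke Theorem~\ref{t:minorders}: in each of the three varieties under consideration, the minimal layer of $\Pord{G}{\V}$ is homeomorphic to the space of right orders (respectively, orders) on $\eta[G]$. Composing with the restriction of $\pi$ gives the desired homeomorphism.

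The only thing worth checking carefully is that the subbase $\{\,\Pa(a)\,\}_{a\in G}$ on $\Pord{G}{\V}$ restricts, on the minimal layer, to the subbase \eqref{eq:rotop} used to topologise $\Rord(\eta[G])$; but for a right order $C$ one has $a\in C$ with $\iv a\notin C$ iff $a\in C$ and $a\ne\e$, and dually $a\notin C$ iff $\iv a\in C$ (for $a\ne\e$), so the two subbases generate the same topology. With this bookkeeping, no real obstacle remains and the corollary follows by concatenating the two homeomorphisms.
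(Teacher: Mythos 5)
Your proposal is correct and follows essentially the same route as the paper, which simply combines the inclusion-preserving homeomorphism of Theorem~\ref{t:main} (via Corollary~\ref{c:top}, whose identification of the specialisation order with inclusion plays the same role as your order-isomorphism argument) with Theorem~\ref{t:minorders}. The subbase bookkeeping you add at the end is a harmless sanity check already subsumed in the statement of Theorem~\ref{t:minorders}.
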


\begin{proof}
Combine Corollary~\ref{c:top} and Theorem~\ref{t:minorders}.
\end{proof}
\begin{remark}\label{r:minorders}We do not know at this stage whether a characterisation of the minimal elements of $\Pord{G}{\V}$ along the lines of Theorem~\ref{t:minorders} is feasible, even in the case of  well-studied varieties $\V$ of $\ell$-groups. For example, suppose $\V$ is the variety of normal-valued $\ell$-groups~\cite[Th\'eor\`eme 4.3.10]{BigardEtAl1977}. Suppose further that $G$ is a group admitting a Conradian right order~\cite[Theorem 9.5]{CR2016}. Then one can prove that each Conradian right order on $G$ is a minimal element of $\Pord{G}{\V}$. However, it is unclear to us at present whether each minimal member of $\Pord{G}{\V}$ is a Conradian right order on $G$. 
 
\end{remark} 
\section{Minimal spectra}\label{s:min}

For any $\ell$-group $H$, we adopt the standard notation $x\perp y$---read `$x$ and $y$ are orthogonal'---to denote $|x|\wedge |y|=\e$, for $x,y \in H$. For $S \sst H$, we set 
\[
S^{\perp} \coloneqq \{\, x \in H \mid x \perp y\text{ for all }y \in S \,\};
\] 
we write $S^{\perp\perp}$ instead of $(S^{\perp})^{\perp}$, and $x^{\perp}$ instead of $\{\, x \,\}^{\perp}$ for $x \in H$. It is clear that $x^{\perp} = |x|^{\perp}$ for every $x \in H$. A subset $T \sst H$ is a {\em polar} if it satisfies $T = T^{\perp\perp}$ or, equivalently, if there exists $S \sst H$ such that $T = S^{\perp}$. We write $\Pol{H}$ for the set of polars of $H$. Under the inclusion order, $\Pol{H}$ is a complete distributive lattice with $H=\e^{\perp}$ as its maximum, $\{\, \e \,\} = H^{\perp} = \e^{\perp\perp}$ as its minimum, meets given by intersection, and joins given by $\bigvee T_i = (\bigcup T_i)^{\perp\perp}$. It can be shown that $\Pol{H}$ is a complete Boolean algebra, with complementation given by the map $T \mapsto T^{\perp}$. If $x \in H$, the set $x^{\perp\perp}$ is called the {\em principal polar} generated by $x$. Then, $x^{\perp\perp}$ is the inclusion-smallest polar containing $x$. We write $\Polp{H}$ for the set of principal polars of $H$; it is a sublattice of $\Pol{H}$ because of the identitities 
\begin{equation}\label{eq:prinpol1}
(x \wedge y)^{\perp\perp} = x^{\perp\perp} \cap y^{\perp\perp},
\end{equation}
\begin{equation}\label{eq:prinpol2}
(x \vee y)^{\perp\perp} = x^{\perp\perp} \vee y^{\perp\perp},
\end{equation}
which hold for every $x, y \in H^+$. The minimum $\e^{\perp\perp}$ of $\Pol{H}$ lies in $\Polp{H}$, while the maximum $H=\e^{\perp}$ is principal if, and only if, $H$ has a {\em weak (order) unit}---an element $w \in H^+$ such that for each $x \in H$, $w \wedge |x| = \e$ implies $x = \e$. In that case, $w^{\perp\perp} = H$. Note that the existence of a weak unit is not sufficient for $\Polp{H}$ to be a Boolean subalgebra of $\Pol{H}$, because the complement of a principal polar need not be principal (cf.\ Theorem~\ref{t:bapolars}).  

We recall here the standard characterisation of minimal primes. 

\begin{proposition}\label{p:minprimes}
For any $\ell$-group $H$ and any $\p\in\Spec{H}$, the following are equivalent.
\begin{enumerate}
\item The prime $\p$ is minimal.
\item $\p = \bigcup \{\, x^{\perp} \mid x \not \in \p \,\}$.
\end{enumerate}
\end{proposition}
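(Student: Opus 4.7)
My plan is to handle the two directions separately, with (1)$\Rightarrow$(2) requiring substantially more work. I will repeatedly use the standard characterisation of primeness for a convex $\ell$\nbd{-}subgroup $\ksg$ of $H$: $\ksg$ is prime iff it is proper and, for all $u, v \in H^+$ with $u \wedge v \in \ksg$, either $u \in \ksg$ or $v \in \ksg$. The inclusion $\bigcup\{x^\perp \mid x \notin \p\} \sst \p$ is always valid: for $x \notin \p$ and $z \in x^\perp$, $|x| \wedge |z| = \e \in \p$ together with primeness of $\p$ (and $|x| \notin \p$) force $|z|\in\p$, i.e., $z \in \p$. For (2)$\Rightarrow$(1), if a prime $\q \subsetneq \p$ existed, picking $y \in \p \setminus \q$ and using (2) we find $x \notin \p$ with $y \in x^\perp$; then $|x| \wedge |y| = \e \in \q$, and primeness of $\q$ puts $x \in \q \sst \p$ or $y \in \q$, either way a contradiction.

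For (1)$\Rightarrow$(2) I argue by contradiction. Given $\p$ minimal and $y \in \p$, which upon replacing with $|y|$ I may assume satisfies $y > \e$, suppose no $x \notin \p$ satisfies $x \wedge y = \e$, i.e., $y^\perp \sst \p$. The goal is to construct a prime $\q \subsetneq \p$, contradicting minimality. The set $F \coloneqq H^+ \setminus \p$ is meet-closed by primeness of $\p$ and upward-closed by convexity; let $\mathcal{F} \sst H^+$ be the filter generated by $F \cup \{y\}$, whose elements are those bounding some $y \wedge f$ with $f \in F$ (or bounding $y$ alone). The key point is $\e \notin \mathcal{F}$: an inequality $\e \geq y \wedge f$ with $f \in F$ would give $y \wedge f = \e$, placing $f$ in $y^\perp \setminus \p$ against our assumption; and $\e \geq y$ is ruled out by $y > \e$.

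Now Zorn's lemma applied to the convex $\ell$-subgroups of $H$ disjoint from $\mathcal{F}$ (nonempty, since $\{\e\}$ belongs; chains have unions as upper bounds) yields a maximal element $\q$. To prove $\q$ prime, suppose $a, b \in H^+$ with $a \wedge b \in \q$ but $a, b \notin \q$. Since $\q \vee \Ksg(a)$ and $\q \vee \Ksg(b)$ are strictly larger than $\q$, maximality of $\q$ forces each to meet $\mathcal{F}$, at elements $f_1, f_2$ respectively. Their meet $f_1 \wedge f_2$ lies in $\mathcal{F}$, while by distributivity of $\Con{H}$ and Proposition~\ref{p:princcon}.(2),
\[
(\q \vee \Ksg(a)) \cap (\q \vee \Ksg(b)) = \q \vee (\Ksg(a) \cap \Ksg(b)) = \q \vee \Ksg(a \wedge b) = \q,
\]
the last step using $a \wedge b \in \q$. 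Hence $f_1 \wedge f_2 \in \q \cap \mathcal{F}$, contradicting the choice of $\q$. Finally, $\q \cap F = \emptyset$ gives $\q^+ \sst \p$, hence $\q \sst \p$; and $y \in \mathcal{F}$ with $\q \cap \mathcal{F} = \emptyset$ gives $y \notin \q$, so $\q \subsetneq \p$, the required contradiction. The principal obstacle is selecting the correct filter $\mathcal{F}$ so that the assumption $y^\perp \sst \p$ is precisely what ensures $\e \notin \mathcal{F}$; the primeness of the maximal $\q$ is then forced by the distributivity of $\Con{H}$ together with the identity $\Ksg(a) \cap \Ksg(b) = \Ksg(a \wedge b)$ for positive $a, b$.
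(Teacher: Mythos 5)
Your proof is correct. The paper does not actually prove this proposition---it only cites \cite[Th\'eor\`eme 3.4.13]{BigardEtAl1977}---so there is no in-paper argument to compare against; what you give is essentially the standard proof found in that reference: the easy inclusion $\bigcup\{x^{\perp}\mid x\notin\p\}\sst\p$ and the direction (2)$\Rightarrow$(1) via primeness, and (1)$\Rightarrow$(2) by forming the filter generated by $(H^+\setminus\p)\cup\{y\}$, checking it misses $\e$ precisely because $y^{\perp}\sst\p$, and taking a convex $\ell$-subgroup maximal among those disjoint from it, whose primeness follows from distributivity of $\Con{H}$ together with $\Ksg(a)\cap\Ksg(b)=\Ksg(a\wedge b)$. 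The only (entirely trivial) loose end is the case $y=\e$ in (1)$\Rightarrow$(2), which your reduction to $y>\e$ does not cover but which is immediate, since $\e\in x^{\perp}$ for any $x$ and $\p$ is proper, so some $x\notin\p$ exists.
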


\begin{proof}
\cite[Th\'eor\`eme 3.4.13]{BigardEtAl1977}.
\end{proof}

\noindent From Proposition~\ref{p:minprimes}, an element $w \in H^+$ is a weak unit if, and only if, $w$ misses every minimal prime.

\begin{lemma}\label{l:latpolp}
The map 
\begin{equation}\label{eq:ontolh}
\Conp{H} \xrightarrow{ \ f \ } \Polp{H}
\end{equation}
defined by $\Ksg{(x)} \mapsto x^{\perp\perp}$ is an onto lattice homomorphism preserving minimum.
\end{lemma}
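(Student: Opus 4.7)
The plan is to verify four things in order: that $f$ is well defined on equivalence classes of the representation $x \mapsto \Ksg(x)$, that $f$ preserves finite meets and joins, that $f(\Ksg(\e))$ is the minimum of $\Polp{H}$, and finally that $f$ is surjective. Surjectivity is immediate from the definition of principal polar, and the minimum preservation reduces to observing $\e^{\perp\perp}=\{\,\e\,\}$, so the real content lies in well-definedness and the homomorphism property.

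For well-definedness, suppose $\Ksg(x)=\Ksg(y)$. By Proposition~\ref{p:princcon}.(1), this means $\Ksg(|x|)=\Ksg(|y|)$, and there exist $n,m\in\N\setminus\{\,0\,\}$ such that $|x|\leq |y|^n$ and $|y|\leq |x|^m$. I need to conclude $x^{\perp\perp}=y^{\perp\perp}$, and since $x^{\perp}=|x|^{\perp}$, it suffices to show $|x|^{\perp}=|y|^{\perp}$. The key lemma I will invoke (a standard fact from~\cite{BigardEtAl1977} or~\cite{Darnel95}) is that if $a,b\in H^+$ are orthogonal, then $a$ is orthogonal to every power of $b$. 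Using this, any $z$ with $|z|\wedge|y|=\e$ satisfies $|z|\wedge|y|^n=\e$, and since $\e\leq|x|\leq|y|^n$, we get $|z|\wedge|x|=\e$. Thus $y^{\perp}\sst x^{\perp}$, and symmetry gives equality. This is the step I expect to be the main obstacle, since it is the only place where a non-formal fact about orthogonality in $\ell$-groups is needed.

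The homomorphism property is then essentially a bookkeeping exercise. Given $x,y\in H$, replace them by $|x|,|y|\in H^+$ using Proposition~\ref{p:princcon}.(1) (which is legitimate thanks to well-definedness). By Proposition~\ref{p:princcon}.(2), $\Ksg(|x|)\wedge\Ksg(|y|)=\Ksg(|x|\wedge|y|)$, so by the identity \eqref{eq:prinpol1},
\[
f(\Ksg(x)\wedge\Ksg(y))=(|x|\wedge|y|)^{\perp\perp}=|x|^{\perp\perp}\cap|y|^{\perp\perp}=f(\Ksg(x))\cap f(\Ksg(y)).
\]
The analogous calculation using Proposition~\ref{p:princcon}.(2) and \eqref{eq:prinpol2} gives the join case. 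Finally, $f(\Ksg(\e))=\e^{\perp\perp}=\{\,\e\,\}$, the minimum of $\Polp{H}$, and surjectivity holds because every principal polar is by definition $x^{\perp\perp}=f(\Ksg(x))$ for some $x\in H$.
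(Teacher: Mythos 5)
Your proof is correct, and its overall skeleton (well-definedness, then the homomorphism identities via Proposition~\ref{p:princcon}.(2) and \eqref{eq:prinpol1}--\eqref{eq:prinpol2}, then minimum and surjectivity) matches the paper's. The one place where you genuinely diverge is the well-definedness step, which you rightly identify as the only non-formal content. The paper disposes of it in one line by citing the general fact $S^{\perp}=\Ksg{(S)}^{\perp}$ for arbitrary $S\sst H$ (\cite[3.2.5]{BigardEtAl1977}), from which $x^{\perp}=\Ksg{(x)}^{\perp}$ and hence $\Ksg{(x)}=\Ksg{(y)}\Rightarrow x^{\perp\perp}=y^{\perp\perp}$ is immediate. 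You instead prove the relevant special case from scratch: unpacking $\Ksg{(x)}=\Ksg{(y)}$ via the power characterisation in Proposition~\ref{p:princcon}.(1) to get $|x|\leq|y|^{n}$ and $|y|\leq|x|^{m}$, and then using the standard $\ell$-group fact that orthogonality of positive elements propagates to powers ($a\wedge b=\e$ implies $a\wedge b^{n}=\e$). Both routes are legitimate; yours trades one external citation for another, slightly more elementary one, and has the merit of being self-contained modulo a single basic identity, while the paper's citation is more general (it identifies the polar of any set with the polar of the convex $\ell$-subgroup it generates, a fact reused elsewhere in Section~\ref{s:min}). The remaining verifications in your write-up are exactly the paper's.
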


\begin{proof}
The map $f$ is well defined, since $S^{\perp} = \Ksg{(S)}^\perp$ for $S \sst H$~\cite[3.2.5]{BigardEtAl1977}, and clearly onto. Moreover, it is a lattice homomorphism by Proposition~\ref{p:princcon} and \eqref{eq:prinpol1}--\eqref{eq:prinpol2}, and preserves the minimum since $\{\, \e \,\} \mapsto \e^{\perp\perp}$.
\end{proof}

Recall the notation $X(D)$ for the Stone dual of a distributive lattice $D$ with minimum, and set for the rest of this section $D \coloneqq \Polp{H}$. In light of Theorem~\ref{t:specdual}, we identify the Stone dual of $\Conp{H}$ with $\Spec{H}$. By general considerations, the map $f$ in \eqref{eq:ontolh} has a Stone dual $f^*$ defined by
\begin{align}\label{eq:fstar}
X(D) &\xrightarrow{ \ f^* \ } \Spec{H} \\ \nonumber
{\jd} &\xmapsto{ \hspace{0.45cm} } \bigvee \{\,  \Ksg{(x)} \mid x^{\perp\perp} \in \jd \,\}.
\end{align}
 Our next aim is to characterise the range of $f^*$. To this end, we introduce a notion that appears to be new with the present paper:

\begin{definition}[Quasi-minimal prime subgroups]
For any $\ell$-group $H$, a prime subgroup $\p \in \Spec{H}$ is {\em quasi minimal} if $\p = \bigcup\{\, x^{\perp\perp} \mid x \in \p \,\}$. The {\em quasi-minimal spectrum} $\Qin{H}$ of $H$ is the subset of quasi-minimal prime subgroups equipped with the subspace topology inherited from $\Spec{H}$.
\end{definition}

We write $\{\, \Supp_{\q}{(x)} \,\}_{x \in H}$ for the open base  induced by $\{\, \Supp{(x)} \,\}_{x \in H}$ on $\Qin{H}$ by restriction. 

\begin{lemma}\label{l:queen}
For any $\ell$-group $H$, the image $f^{*}[X(D)]$ coincides with the quasi-minimal spectrum $\Qin{H}$. Further, $f^{*}$ is a homeomorphism onto its range.
\end{lemma}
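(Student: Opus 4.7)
The plan is to establish, as a single key technical fact, that for any prime ideal $\jd \in X(D)$ and any $x \in H$, one has $x \in f^*(\jd)$ if and only if $x^{\perp\perp} \in \jd$. The nontrivial direction uses the compactness of $\Ksg(x)$ in $\Con{H}$ from Proposition~\ref{p:convcomp}: if $x \in f^*(\jd)$, then $\Ksg(x)$ is contained in a finite join $\Ksg(y_1)\vee\cdots\vee\Ksg(y_n)$ with $y_i^{\perp\perp}\in\jd$, and setting $y\coloneqq |y_1|\vee\cdots\vee|y_n|$ yields $\Ksg(x)\sst\Ksg(y)$ with $y^{\perp\perp}=y_1^{\perp\perp}\vee\cdots\vee y_n^{\perp\perp}\in\jd$. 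The containment $\Ksg(x)\sst\Ksg(y)$ gives $|x|\leq|y|^m$ for some $m$, and because orthogonality to $|y|$ in an $\ell$-group implies orthogonality to every power of $|y|$, one obtains $x^{\perp\perp}\sst y^{\perp\perp}$; downward-closure of $\jd$ then gives $x^{\perp\perp}\in\jd$. The converse direction is immediate from $x\in x^{\perp\perp}\sst f^*(\jd)$.

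With this equivalence in hand, the containment $f^*[X(D)]\sst\Qin{H}$ is a formality: for $\p=f^*(\jd)$ and $x\in\p$, any $y\in x^{\perp\perp}$ satisfies $y^{\perp\perp}\sst x^{\perp\perp}$, so $y^{\perp\perp}\in\jd$, whence $y\in\p$; thus $x^{\perp\perp}\sst\p$, and quasi-minimality $\p=\bigcup\{x^{\perp\perp}\mid x\in\p\}$ follows. For the reverse inclusion, to each quasi-minimal $\p$ I would associate
\[
\jd_\p \coloneqq \{\, x^{\perp\perp} \mid x \in \p \,\} = \{\, T \in \Polp{H} \mid T \sst \p \,\},
\]
the second equality being exactly quasi-minimality. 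Mirroring the argument of Theorem~\ref{t:specdual}, $\jd_\p$ is a proper prime ideal of $D$: downward-closure is trivial; closure under finite joins uses $(|x|\vee|y|)^{\perp\perp}=x^{\perp\perp}\vee y^{\perp\perp}$ together with $|x|\vee|y|\in\p$; primeness uses $(x\wedge y)^{\perp\perp}=x^{\perp\perp}\cap y^{\perp\perp}$ and the primeness of $\p$; and properness follows because $h\in h^{\perp\perp}$ for every $h\in H$, so $\jd_\p=D$ would force $\p=H$. A direct computation then yields $f^*(\jd_\p)=\bigvee\{\Ksg(x)\mid x\in\p\}=\p$, since $\p$ is already a convex $\ell$-subgroup.

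For the homeomorphism statement, injectivity of $f^*$ is clean from the equivalence above: if $\jd_1\neq\jd_2$, a witness $x^{\perp\perp}\in\jd_1\setminus\jd_2$ yields $x\in f^*(\jd_1)$ but $x\notin f^*(\jd_2)$. Continuity and openness are handled symmetrically by computing, for each $x\in H$,
\[
(f^*)^{-1}\bigl(\Supp_{\q}(x)\bigr) = \widehat{x^{\perp\perp}} \quad\text{and}\quad f^*\bigl[\widehat{x^{\perp\perp}}\bigr] = \Supp_{\q}(x),
\]
both of which are immediate from the equivalence $x\in f^*(\jd)\iff x^{\perp\perp}\in\jd$. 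Since the sets $\widehat{x^{\perp\perp}}$ form a subbase of $X(D)$ and the sets $\Supp_{\q}(x)$ form a base of $\Qin{H}$, this simultaneously proves continuity and openness of $f^*$ onto its range.

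The main obstacle is bookkeeping the passage between elements of $H$, their principal convex $\ell$-subgroups, and their principal polars, ensuring at each step that the quasi-minimality hypothesis is the precise condition that makes $\jd_\p$ well-defined as a \emph{prime ideal} of $\Polp{H}$; everything else is an application of Stone duality to the surjective lattice homomorphism $f$ of Lemma~\ref{l:latpolp} combined with the compactness argument copied from Theorem~\ref{t:specdual}.
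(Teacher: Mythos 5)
Your proposal is correct and follows essentially the same route as the paper: the same key equivalence $x \in f^*(\jd) \Leftrightarrow x^{\perp\perp} \in \jd$ established via compactness of $\Ksg(x)$, the same prime ideal $\jd_\p = \{\, x^{\perp\perp} \mid x \in \p \,\}$ for the surjectivity onto $\Qin{H}$, and the same computation $f^*[\widehat{x^{\perp\perp}}] = \Supp_{\q}(x)$ for the homeomorphism. The only differences are that you spell out a few steps the paper leaves implicit (the orthogonality-to-powers argument for $x^{\perp\perp} \sst y^{\perp\perp}$, and properness of $\jd_\p$), which is fine.
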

\begin{proof}
We establish the following equivalent description of~$f^*(\jd)$, for any $\jd \in X(D)$:
\begin{equation}\label{eq:darnelmap}
f^*(\jd) = \{\, x \in H \mid x^{\perp\perp} \in \jd \,\}.
\end{equation}
(Compare with Darnel's construction in~\cite[Proposition 49.18]{Darnel95}.) First, observe that $y \in f^*(\jd)$ implies $\Ksg{(y)} \sst \Ksg{(x_1)} \vee \dots \vee \Ksg{(x_n)}$ for some $x_1^{\perp\perp}, \dots, x_n^{\perp\perp} \in \jd$ by Proposition~\ref{p:convcomp}. Further, from Proposition~\ref{p:princcon} it follows $\Ksg{(y)} \sst \Ksg{(x)}$ for some $x^{\perp\perp} \in \jd$, since $\jd$ is closed under finite joins. Thus, from $\Ksg{(y)} \sst \Ksg{(x)}$ we obtain $y^{\perp\perp} \sst x^{\perp\perp}$, which allows the conclusion $y \in  \{\, x \in H \mid x^{\perp\perp} \in \jd \,\}$. Conversely, $y^{\perp\perp} \in \jd$, then $\Ksg{(y)} \sst f^*(\jd)$ by \eqref{eq:fstar}, and hence, $y \in f^*(\jd)$. 

Now, if $y \in x^{\perp\perp}$ for $x \in f^{*}(\jd)$, then $y^{\perp\perp} \sst x^{\perp\perp}$, and hence $y^{\perp\perp} \in \jd$ by downward closure of $\jd$. Thus, $y \in f^{*}(\jd)$, which entails $f^{*}(\jd) \in \Qin{H}$. For the remaining inclusion, suppose $\p \in \Qin{H}$. We prove that 
\[
\jd_\p \coloneqq \{\, x^{\perp\perp} \mid x \in \p \,\}
\] 
is a prime ideal of $D$, and hence, is the pre-image of $\p$ under $f^*$. It is elementary that $\jd_\p$ is an ideal of $D$. Now, by \eqref{eq:prinpol1}, $x^{\perp\perp} \wedge y^{\perp\perp}=x^{\perp\perp} \cap y^{\perp\perp} \in \jd_\p$ is equivalent to $(x \wedge y)^{\perp\perp} \in \jd_\p$, that is, $x \wedge y \in \p$. By primeness of $\p$, either $x \in \p$ or $y \in \p$, from which the desired conclusion.

Note that injectivity of $f^{*}$ is now immediate from \eqref{eq:darnelmap}. Finally, to show that $f^{*}$ is a homeomorphism onto its range it suffices to observe that \eqref{eq:darnelmap} entails $f^{*}[\widehat{(x^{\perp\perp})}] = \Supp_{\q}{(x)}$, where $x$ ranges over $H$, and $\widehat{(x^{\perp\perp})}$ is the set of all prime ideals of $D$ not containing $x^{\perp\perp}$; cf.\ \eqref{eq:widehat}. 
\end{proof}
We record a consequence that provides for lattice-groups the spectral equivalent of the existence of a weak unit.
\begin{corollary}
For any $\ell$-group $H$, there exists a weak unit $w \in H$ if, and only if, $\Qin{H}$ is compact.
\end{corollary}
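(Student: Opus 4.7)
The plan is to reduce the statement to standard Stone duality via Lemma~\ref{l:queen}, which gives a homeomorphism between $\Qin{H}$ and the Stone dual $X(D)$ of the distributive lattice $D \coloneqq \Polp{H}$. Since $\Polp{H}$ has minimum $\e^{\perp\perp}$, the same Stone-duality fact invoked in the proof of Corollary~\ref{c:specspec} applies: the generalised spectral space $X(D)$ is compact if, and only if, $D$ has a maximum. Combining these two equivalences, it will suffice to show that $\Polp{H}$ has a maximum if, and only if, $H$ admits a weak unit.

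For one direction, if $w \in H^+$ is a weak unit, then by definition $w \wedge |x| = \e$ forces $x = \e$, so $w^{\perp} = \{\,\e\,\}$; consequently $w^{\perp\perp} = \{\,\e\,\}^{\perp} = H$, so $H$ itself is a principal polar and hence the maximum of $\Polp{H}$. For the converse, suppose $T \in \Polp{H}$ is the maximum of $\Polp{H}$. For every $y \in H$ the principal polar $y^{\perp\perp}$ belongs to $\Polp{H}$, hence $y \in y^{\perp\perp} \subseteq T$, so $T = H$. Writing $T = w^{\perp\perp}$ for some $w \in H$ (and replacing $w$ by $|w|$ using $w^{\perp\perp} = |w|^{\perp\perp}$) we obtain $w^{\perp} = H^{\perp} = \{\,\e\,\}$, which is precisely the condition that $w \in H^+$ be a weak unit.

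I expect no substantive obstacle: the bulk of the work has already been done in the preamble to the corollary---the identification of $H$ as a principal polar with the existence of a weak unit is explicitly recorded there, and Lemma~\ref{l:queen} does the spectral translation---so the proof amounts to splicing these facts with the Stone-duality compactness criterion for distributive lattices with minimum.
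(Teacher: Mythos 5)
Your proof is correct and follows exactly the route the paper intends: Lemma~\ref{l:queen} identifies $\Qin{H}$ with the Stone dual of $\Polp{H}$, the compactness criterion for duals of distributive lattices with minimum is the same one invoked in Corollary~\ref{c:specspec}, and the equivalence between $\Polp{H}$ having a maximum and $H$ having a weak unit is already recorded in the paragraph preceding the corollary. The paper's own proof is just a one-line citation of these facts; you have merely written out the details.
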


\begin{proof}
This is an immediate consequence of Lemma~\ref{l:queen} along with standard Stone duality (cf.\ Corollary~\ref{c:specspec}).
\end{proof}

\begin{theorem}\label{t:bapolars}
For any $\ell$-group $H$, we have $\Min{H} \sst \Qin{H}$, and the following are equivalent.
\begin{enumerate}
\item $\Polp{H}$ is a Boolean subalgebra of $\Pol{H}$.

\item $\Min{H}$ is compact. 

\item $H^+$ is {\em complemented}: for every $x \in H^+$ there is $y \in H^+$ such that $x \wedge y = \e$ and $x \vee y$ is a weak unit.
\end{enumerate}
If any one of the equivalent conditions (1)--(3) holds, then $\Min{H} = \Qin{H}$.
\end{theorem}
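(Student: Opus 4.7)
My plan is to first establish the inclusion $\Min{H}\sst\Qin{H}$, then close the cycle (1)$\Rightarrow$(2)$\Rightarrow$(3)$\Rightarrow$(1); the additional assertion that any of the equivalent conditions forces $\Min{H}=\Qin{H}$ will drop out of the first implication of the cycle. For the inclusion I will use twice the elementary fact that in any $\ell$-group, if $z\notin\q\in\Spec{H}$ then $z^\perp\sst\q$: indeed any $w\in z^\perp$ satisfies $|w|\wedge|z|=\e\in\q$, and primeness forces $|w|\in\q$. Now let $\p\in\Min{H}$ and $y\in\p$; Proposition~\ref{p:minprimes} yields some $z\notin\p$ with $y\in z^\perp$, i.e.,\ $z\in y^\perp$. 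For any $w\in y^{\perp\perp}$ we then have $w\perp z$, so $w\in z^\perp\sst\p$, giving $y^{\perp\perp}\sst\p$ and hence $\p\in\Qin{H}$.

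For (1)$\Rightarrow$(2) I invoke Lemma~\ref{l:queen}, which identifies $\Qin{H}$ with the Stone dual $X(\Polp{H})$. When $\Polp{H}$ is Boolean, all its prime ideals are maximal, so $\Qin{H}$ is an inclusion-antichain in $\Spec{H}$; since every prime contains a minimal one and $\Min{H}\sst\Qin{H}$, each $\p\in\Qin{H}$ equals the minimal prime it contains, so $\Min{H}=\Qin{H}$. Compactness is then immediate from standard Stone duality (cf.\ Corollary~\ref{c:specspec}). The step (2)$\Rightarrow$(3) is the heart of the argument. Its key input is that for minimal $\p$ and $x\in H$, $x\notin\p$ is equivalent to $x^\perp\sst\p$: the forward direction is the general fact recalled above, and conversely if $x\in\p$, Proposition~\ref{p:minprimes} yields $y\notin\p$ with $y\in x^\perp\sst\p$, a contradiction. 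Hence $\Supp_{\m}(x)=\Van_{\m}(x^\perp)$ is clopen in $\Min{H}$, and the closed set $\Van_{\m}(x)=\bigcup_{z\in x^\perp}\Supp_{\m}(z)$ (again by Proposition~\ref{p:minprimes}) admits, by compactness of $\Min{H}$, a finite subcover $\Supp_{\m}(z_1),\dots,\Supp_{\m}(z_n)$ with $z_i\in x^\perp$. Replacing each $z_i$ by $|z_i|$ and setting $y\coloneqq z_1\vee\cdots\vee z_n\in(x^\perp)^+$, I obtain $x\wedge y=\e$ and $\Supp_{\m}(x\vee y)=\Supp_{\m}(x)\cup\Van_{\m}(x)=\Min{H}$; by the remark recorded right after Proposition~\ref{p:minprimes}, $x\vee y$ is then a weak unit.

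For (3)$\Rightarrow$(1), given $x\in H^+$ with complement $y$ I claim $x^\perp=y^{\perp\perp}$. The inclusion $y^{\perp\perp}\sst x^\perp$ is immediate from $x\wedge y=\e$. For the reverse, let $z\in x^\perp$ and $v\in y^\perp$; a single distributivity computation gives
\[
(x\vee y)\wedge|z|\wedge|v|=(x\wedge|z|\wedge|v|)\vee(y\wedge|z|\wedge|v|)=\e,
\]
and since $x\vee y$ is a weak unit, $|z|\wedge|v|=\e$, so $z\in y^{\perp\perp}$. Applying (3) with $x=\e$ produces a weak unit $w$, hence $H=w^{\perp\perp}\in\Polp{H}$ provides a maximum, and together with the closure under complementation just proved this makes $\Polp{H}$ a Boolean subalgebra of $\Pol{H}$. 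The final assertion $\Min{H}=\Qin{H}$ under the equivalent conditions is already part of the (1)$\Rightarrow$(2) step. The most delicate point is (2)$\Rightarrow$(3): one must first discover that the basic opens $\Supp_{\m}(x)$ are actually clopen in $\Min{H}$, and then marry compactness of $\Min{H}$ with the Proposition~\ref{p:minprimes} characterisation of weak units to manufacture, via a single finite join in $x^\perp$, an orthogonal complement of $x$ whose join with $x$ is a weak unit.
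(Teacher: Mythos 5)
Your proof is correct and follows essentially the same route as the paper's: the same cycle (1)$\Rightarrow$(2)$\Rightarrow$(3)$\Rightarrow$(1) resting on Lemma~\ref{l:queen}, Proposition~\ref{p:minprimes}, and the identities \eqref{eq:prinpol1}--\eqref{eq:prinpol2}, with the (2)$\Rightarrow$(3) step essentially identical. The only local variations---deducing $\Min{H}=\Qin{H}$ from the maximality of prime ideals in a Boolean algebra rather than by direct element-chasing, verifying $x^{\perp}=y^{\perp\perp}$ by a distributivity computation rather than via uniqueness of complements in $\Pol{H}$, and extracting the final assertion from (1) instead of (2)---are cosmetic.
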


\begin{proof}
By Proposition~\ref{p:minprimes}, if $x \in \m$ and $\m \in \Min{H}$, there is an element $y \not\in \m$ such that $y \in x^\perp$. Moreover, every element $z \in x^{\perp\perp}$ is also an element of $y^\perp$, that is, $z \in \m$. Therefore, $\m \in \Qin{H}$. 

For (1) $\Rightarrow$ (2), observe that if $\Polp{H}$ is a Boolean algebra, then for every $x \in H$, we have $x^{\perp} = y^{\perp\perp}$ for some $y \in H$. Thus, a quasi-minimal prime $\p$ is 
\begin{equation}\nonumber
\p =\bigcup \{\, x^{\perp\perp} \mid x \in \p \,\} = \bigcup\{\, y^{\perp} \mid y^{\perp} = x^{\perp\perp}\text{ for some }x \in \p \,\}.
\end{equation} 
Now, if $y \in \p$ for some of those $y$, then $y^{\perp\perp} \sst \p$ and hence, 
\[
x^{\perp\perp} \vee y^{\perp\perp} = (x \vee y)^{\perp\perp} = H \sst \p,
\] 
which is a contradiction. Moreover, if $y \not\in \p$, then $y \not \in \m$ for every minimal prime $\m \sst \p$, that is, $y^{\perp} \sst \m$ for every minimal prime $\m \sst \p$. Hence, $y^{\perp} \sst \p$. Therefore, every quasi-minimal prime $\p \in \Qin{H}$ is in fact minimal. Since~$\Qin{H}$ is the Stone dual space of a Boolean algebra, it is compact, and hence so is $\Min{H}$.

For (2) $\Rightarrow$ (3), we first observe that 
\begin{equation}\nonumber
\Supp_{\m}{(x)} = \Van_{\m}{(x^{\perp})},
\end{equation}
and hence 
\begin{equation}\nonumber
\Van_{\m}{(x)} =  \bigcup_{z \in x^{\perp}} \Supp_{\m}{(z)} = \Supp_{\m}{(y_1 \vee \dots \vee y_n)},
\end{equation}
for some $y_1, \dots, y_n \in x^{\perp}$, where the last equality follows from $\Van_{\m}{(x)}$ being closed in a compact space, and from Proposition~\ref{p:princsupp}. Let $y \coloneqq y_1 \vee \dots \vee y_n$. We show that $|x| \wedge |y| = \e$, and $\Supp_{\m}(|x| \vee |y|) = \Min{H}$. In fact, 
\begin{equation}\nonumber
\Supp_{\m}(|x| \vee |y|) = \Supp_{\m}{(x)} \cup \Supp_{\m}{(y)} = \Supp_{\m}{(x)} \cup \Van_{\m}{(x)} = \Min{H},
\end{equation}
that is, $|x| \vee |y|$ is a weak unit. Further, 
\begin{equation}\nonumber
\Supp_{\m}(|x| \wedge |y|) = \Supp_{\m}{(x)} \cap \Supp_{\m}{(y)} = \Supp_{\m}{(x)} \cap \Van_{\m}{(x)} = \emptyset,
\end{equation}
or equivalently, $|x| \wedge |y| = \e$ since $\bigcap_{\m \in \Min{H}} \m = \{\, \e \,\}$. 

For (3) $\Rightarrow$ (1), observe that two positive elements $x, y \in H^{+}$ are orthogonal if, and only if, $x^{\perp\perp} \cap y^{\perp\perp} = \{\, \e \,\}$ by \eqref{eq:prinpol1}. Similarly by \eqref{eq:prinpol2}, $x \vee y$ is a weak unit if, and only if, $x^{\perp\perp} \vee y^{\perp\perp} = H$. Now, since $x^{\perp} = {|x|}^{\perp}$ for every $x \in H$, the proof is complete. 

Finally, we show that if $\Min{H}$ is compact, then $\Min{H} = \Qin{H}$. For this, assume $\p \in \Qin{H}\setminus\Min{H}$. Then for every $\m \in \Min{H}$, there is $x \in \p$ such that $x \not \in \m$. Hence, 
\[
\Min{H} \sst \bigcup \{\, \Supp{(x)} \mid x \in \p \,\}.
\]
If we assume compactness of $\Min{H}$, we have $\Min{H} \sst \Supp{(x_1 \vee \dots \vee x_n)}$ for some $w \coloneqq x_1 \vee \dots \vee x_n \in \p$. Hence, the prime $\p$ contains the weak unit $w$, which is a contradiction since $w^{\perp\perp} = H$. 
\end{proof}

\begin{example}\label{ex:theonlyone}
The reverse implication from Theorem~\ref{t:bapolars} does not hold. In fact, we exhibit an $\ell$-group $H$ for which $\Min{H} = \Qin{H}$ is not compact. Let $H$ be the $\ell$-group
\begin{equation}\nonumber
H \coloneqq \{\,  f \colon \N \to \Z \mid {\rm supp}(f) \text{ is finite} \,\},
\end{equation} 
\noindent where ${\rm supp}(f) \coloneqq \{\, n \in \N \mid f(n) \ne 0 \,\}$, with coordinate-wise operations, and the function $\overline{0}$ constantly equal to $0$ as the group identity. We show that $\Qin{H} = \Min{H} = (\N, \tau_d)$, where $\tau_d$ is the discrete topology.
\begin{claim}\label{cl:example}
The distributive lattice $\Conp{H}$ is $\Polp{H}$, and it is isomorphic to $(\N^*, \cap, \cup)$, where 
\[
\N^* \coloneqq \{\, S \sst \N \mid S \text{ is finite} \,\}.
\]
\end{claim}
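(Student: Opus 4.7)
The plan is to prove the claim by exhibiting an explicit lattice isomorphism $\Conp{H} \cong (\N^*, \cap, \cup)$, and showing along the way that the same family of subgroups appears as $\Polp{H}$. The key observation is that each nonzero element $f \in H$ carries a finite support $\mathrm{supp}(f)\sst \N$, and every principal convex $\ell$\nbd{-}subgroup will turn out to be determined by precisely this finite set.

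First I would describe $\Ksg(f)$ explicitly for $f\in H$. Since $H$ is Abelian with coordinate-wise operations, absolute values and powers are computed pointwise, where $|f|^n$ means $n|f|$ in additive notation. By Proposition~\ref{p:princcon}(1), $\Ksg(f) = \{\, g \in H \mid |g| \leq n|f| \text{ for some } n\geq 1 \,\}$. A direct check shows that $|g|\leq n|f|$ is solvable in $n$ if, and only if, $\mathrm{supp}(g)\sst \mathrm{supp}(f)$: the forward implication is immediate, while the converse uses that $g$ has finite support, so one can take $n = \max\{\,|g|(k) \mid k\in\mathrm{supp}(g)\,\}$. Hence $\Ksg(f) = \{\, g \in H \mid \mathrm{supp}(g) \sst \mathrm{supp}(f) \,\}$.

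Next I would compute the principal polar in the same spirit. Because $|f|\wedge|g|$ is the pointwise minimum of nonnegative integers, $f$ and $g$ are orthogonal if, and only if, their supports are disjoint. This gives $f^\perp = \{\, g \in H \mid \mathrm{supp}(g) \cap \mathrm{supp}(f) = \emptyset \,\}$, and applying the same reasoning a second time yields $f^{\perp\perp} = \{\, g \in H \mid \mathrm{supp}(g) \sst \mathrm{supp}(f) \,\} = \Ksg(f)$. Thus $\Conp{H} = \Polp{H}$.

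Finally I would set $\varphi \colon \Conp{H} \to \N^*$ by $\Ksg(f) \mapsto \mathrm{supp}(f)$. The previous description makes $\varphi$ well-defined and injective; it is surjective because for any finite $S\sst \N$ the indicator $\mathbbm{1}_S\in H$ satisfies $\mathrm{supp}(\mathbbm{1}_S)=S$. That $\varphi$ is a lattice homomorphism follows from Proposition~\ref{p:princcon}(2) together with the evident pointwise identities $\mathrm{supp}(f\wedge g) = \mathrm{supp}(f)\cap\mathrm{supp}(g)$ and $\mathrm{supp}(f\vee g) = \mathrm{supp}(f)\cup\mathrm{supp}(g)$ for $f,g\in H^+$ (one may reduce to positive representatives by $\Ksg(f)=\Ksg(|f|)$). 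No step here is genuinely obstructive; the argument is a sequence of elementary verifications once one commits to the coordinate-wise viewpoint.
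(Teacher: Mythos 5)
Your proposal is correct and follows essentially the same route as the paper's own proof: both identify $\Ksg(f)$ and $f^{\perp\perp}$ with the set of elements whose support is contained in $\mathrm{supp}(f)$, and then read off the isomorphism with $(\N^*,\cap,\cup)$ via the support map. The only differences are cosmetic (additive notation for powers, and the explicit reduction to positive representatives).
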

\begin{proof}
We start from the latter, and observe that for any $f \in H^+$, 
\begin{equation}\nonumber
f \wedge g = \overline{0}\ \ \text{ if, and only if, }\ \ {\rm supp}(f) \cap {\rm supp}(g) = \emptyset.
\end{equation}
Therefore, $h \wedge g = \overline{0}$ for every $g \in f^{\perp}$ precisely when ${\rm supp}(h) \sst {\rm supp}(f)$. From which we conclude
\begin{equation}\nonumber
f^{\perp\perp} = \{\, h \in H \mid {\rm supp}(h) \sst {\rm supp}(f) \,\}.
\end{equation}
Thus, from \eqref{eq:prinpol1}--\eqref{eq:prinpol2}, the map $f^{\perp\perp} \mapsto {\rm supp}(f)$ is a lattice isomorphism $\Polp{H} \cong (\N^*, \cap, \cup)$. Moreover, 
\begin{equation}\nonumber
\Ksg{(f)} = \{\, g \in H \mid |g| \leq f^n\text{ for some }n \in \N \,\}.
\end{equation}
Hence, every positive element of $\Ksg{(f)}$ has support included in ${\rm supp}(f)$. Conversely, if $g \in H^+$ and ${\rm supp}(g) \sst {\rm supp}(f)$, then $g(n) \geq 0$ implies $f(n) \geq 0$. Now, since the functions have finite support, it is possible to find $m \in \N$ so that $g(n) \leq mf(n)$, for every $n \in \N$. Therefore, $g \in \Ksg{(f)}$.
\end{proof}
\noindent From Lemma~\ref{l:queen} and Theorem~\ref{t:specdual}, we conclude $\Spec{H} = \Qin{H}$.
\begin{claim}
The Stone dual space of $(\N^*, \cap, \cup)$ is $(\N, \tau_d)$.
\end{claim}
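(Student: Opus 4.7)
\begin{proof}
The plan is to identify the prime ideals of $(\N^*,\cap,\cup)$ explicitly as a set indexed by $\N$, and then to observe that each singleton of the dual is a basic open set.

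For each $n\in\N$, set $J_n\coloneqq\{\,S\in\N^* \mid n\notin S\,\}$. It is immediate that $J_n$ is downward closed, closed under finite unions, proper (since $\{n\}\notin J_n$), and prime (if $n\notin S\cap T$, then $n\notin S$ or $n\notin T$). So $\{J_n\mid n\in\N\}$ injects into the prime ideals of $\N^*$, and distinct indices give distinct ideals (since $\{m\}\in J_n\setminus J_m$ for $m\neq n$).

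Conversely, let $J$ be any prime ideal of $\N^*$. Since $J$ is proper, pick $T\in\N^*\setminus J$; writing $T=\{n_1\}\cup\cdots\cup\{n_k\}$ and using that $J$ is closed under finite joins, there must exist some $n\in\N$ with $\{n\}\notin J$. For any $S\in\N^*$ with $n\notin S$, we have $S\cap\{n\}=\emptyset\in J$; by primeness and $\{n\}\notin J$ we conclude $S\in J$, so $J_n\subseteq J$. Conversely, if $S\in J$ and $n\in S$, then $\{n\}\subseteq S$, and downward closure of $J$ yields $\{n\}\in J$, a contradiction. Hence $J\subseteq J_n$, and therefore $J=J_n$. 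Uniqueness of $n$ follows since $\{n\}\notin J=J_n$ pins down $n$.

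Thus the map $n\mapsto J_n$ is a bijection $\N\to X(\N^*)$. For the topology, recall the subbasic opens $\widehat{S}=\{J\in X(\N^*)\mid S\notin J\}$ of the Stone topology; under the bijection above, $\widehat{S}$ corresponds to $\{\,n\in\N\mid n\in S\,\}=S$. In particular, each singleton $\{n\}\in\N^*$ yields $\widehat{\{n\}}=\{J_n\}$, so every singleton of $\N$ is open. The Stone topology is therefore the discrete topology $\tau_d$, and the Stone dual of $(\N^*,\cap,\cup)$ is $(\N,\tau_d)$.
\end{proof}

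The only step that requires care is the reduction from an arbitrary proper prime ideal to one of the $J_n$; the rest is essentially bookkeeping about finite sets and the definition of the Stone topology.
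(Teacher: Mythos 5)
Your proof is correct, but it takes a genuinely different route from the paper's. The paper argues in the opposite direction: it observes that $(\N,\tau_d)$ is a generalised spectral space whose compact open sets are exactly the finite subsets of $\N$---that is, its lattice of compact opens is $(\N^*,\cap,\cup)$---and then invokes Stone's representation theorem (a generalised spectral space is homeomorphic to the Stone dual of its lattice of compact opens) to conclude. You instead compute the dual space from scratch: you classify the prime ideals of $\N^*$ as precisely the sets $J_n=\{\,S\in\N^*\mid n\notin S\,\}$ and check directly that the subbasic open $\widehat{S}$ corresponds to $S\subseteq\N$, so that singletons are open and the topology is discrete. Your argument is self-contained and makes the homeomorphism completely explicit, at the cost of redoing by hand what the duality theorem gives for free; the paper's argument is shorter but leans entirely on the cited result. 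One small point you may wish to make explicit: in the step where you pick $T\notin J$ and decompose it into singletons, you need $T\neq\emptyset$, which holds because a prime ideal is by convention nonempty and downward closed, hence contains the bottom element $\emptyset$ of $\N^*$; with that remark the classification of prime ideals is airtight.
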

\begin{proof}
It is straightforward that $(\N, \tau_d)$ is a generalised spectral space whose compact opens are precisely the finite subsets of $\N$. The result now follows from~\cite[Theorem 15]{Stone38}.
\end{proof}
\noindent Therefore, $\Spec{H} = \Qin{H} = (\N, \tau_d)$ is $\Min{H}$, since the specialisation order of $(\N, \tau_d)$ is trivial. This completes Example~\ref{ex:theonlyone}.
\end{example}

\begin{remark}[Compactness of minimal spectra]The equivalence of  (1), (2), and (3) in Theorem~\ref{t:bapolars} is a well-known result, of which we have provided a streamlined proof for the reader's convenience. There is a substantial literature concerned with the compactness of minimal spectra of various structures, and we cannot do  justice to it here. In connection with Theorem~\ref{t:bapolars} we ought to at least mention  Speed's paper~\cite{Speed1969} for distributive lattices, and Conrad's and Martinez' paper~\cite{ConradMartinez1990} for $\ell$-groups. Let us also mention that, in the Archimedean case, compactifications of minimal spectra of lattice-groups were recently shown to be inextricably related to the construction of projectable hulls, see~\cite{BMMP2018, HagerMcGovern}.
\end{remark}

\begin{corollary}\label{c:cbapolars}
For any partially ordered group $G$ and any variety $\V$ of $\ell$\nbd{-}groups, the minimal layer of $\Pord{G}{\V}$ is compact if, and only if, any one of the equivalent conditions of Theorem~\ref{t:bapolars} holds for $\Free{G}{\V}$.
\end{corollary}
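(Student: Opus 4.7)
The plan is to reduce the statement directly to Theorem~\ref{t:bapolars} by transferring the topology and inclusion order through the homeomorphism of Theorem~\ref{t:main}. First, I would invoke Theorem~\ref{t:main} to recall that $\kappa\colon \Pord{G}{\V}\to \Spec\Free{G}{\V}$ is an inclusion-preserving homeomorphism with inclusion-preserving inverse $\pi$. Because $\kappa$ and $\pi$ are mutually inverse and both preserve inclusion, $\kappa$ restricts to a bijection between the set of inclusion-minimal elements of $\Pord{G}{\V}$ and the set of inclusion-minimal elements of $\Spec\Free{G}{\V}$, which by definition is $\Min{\Free{G}{\V}}$.

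Next, I would endow both the minimal layer of $\Pord{G}{\V}$ and $\Min{\Free{G}{\V}}$ with the respective subspace topologies (the latter is precisely the convention adopted in Section~\ref{s:min}). Since the restriction of a homeomorphism to a subset is a homeomorphism onto its image when both sides carry the induced topologies, the restricted bijection is a homeomorphism
\[
\kappa\colon \{\, C\in\Pord{G}{\V} \mid C \text{ is inclusion-minimal}\,\}\; \xrightarrow{\ \sim\ }\; \Min{\Free{G}{\V}}.
\]

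Finally, since compactness is a topological invariant, the minimal layer of $\Pord{G}{\V}$ is compact if and only if $\Min{\Free{G}{\V}}$ is compact. By Theorem~\ref{t:bapolars}, this latter condition is equivalent to any one of the three stated conditions holding for $\Free{G}{\V}$, completing the proof.

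There is no real obstacle here: the entire argument is a routine transport of structure through the homeomorphism $\kappa$ established in Theorem~\ref{t:main}, combined with Theorem~\ref{t:bapolars}. The only point to double-check is that inclusion-minimal elements in $\Pord{G}{\V}$ correspond under $\kappa$ precisely to minimal primes of $\Free{G}{\V}$, and this is immediate from the fact that both $\kappa$ and $\pi$ preserve inclusion.
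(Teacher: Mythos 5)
Your proposal is correct and follows exactly the paper's route: the paper's proof is the one-line ``Combine Theorem~\ref{t:main} and Theorem~\ref{t:bapolars}'', and you have simply spelled out the transport of minimal elements and compactness through the inclusion-preserving homeomorphism $\kappa$. No issues.
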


\begin{proof}
Combine Theorem~\ref{t:main} and Theorem~\ref{t:bapolars}.
\end{proof}

For a group $G$, set 
\begin{equation}\nonumber
\Xa{(a)} \coloneqq \{\, S \sst G \mid a \in S \,\}\ \ \text{ and }\ \ \Xa^{c}{(a)} \coloneqq \{\, S \sst G \mid a \not \in S \,\}, \quad \text{ for }a \in G.
\end{equation}
We endow $2^{G}$ with the smallest topology containing all sets $\Xa{(a)}$ and $\Xa^{c}{(a)}$. With this topology, the space $2^{G}$ is easily shown to be a Stone space, see~\cite[p.\ 6]{CR2016}. Assume additionally that $G$ is partially ordered. It is then elementary that the set of pre-cones $C \sst G$ such that $C \cap \iv{C} = \{\, \e \,\}$, with the subspace topology inherited from $2^{G}$, is homeomorphic to the subspace~$\Rord{(G)}$ of $\RPord{(G)}$ consisting of right orders on $G$ topologised as in \eqref{eq:rotop}. This homeomorphism restricts to one between normal pre-cones $C \sst G$ with the property $C \cap \iv{C} = \{\, \e \,\}$, and the space $\Ord{(G)}$ of orders on $G$.

\begin{lemma}\label{l:compactrord}
Suppose $\V$ is the variety of all $\ell$-groups (respectively, representable or Abelian $\ell$-groups). For any partially ordered group $G$, the minimal layer of $\Pord{G}{\V}$ is compact. 
\end{lemma}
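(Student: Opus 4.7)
The plan is to reduce compactness of the minimal layer of $\Pord{G}{\V}$ to compactness of a concretely described subspace of the Stone space $2^{\eta[G]}$, and then to verify the latter by closedness. First, by Corollary~\ref{c:minetag} (equivalently, Theorem~\ref{t:minorders}), the minimal layer of $\Pord{G}{\V}$ is homeomorphic to the space $\Rord{(\eta[G])}$ of right orders on $\eta[G]$ when $\V=\L$, and to the space $\Ord{(\eta[G])}$ of orders on $\eta[G]$ when $\V\in\{\Rep,\A\}$; this homeomorphism intertwines the subspace topologies inherited from $\Pord{G}{\V}$ with the topology defined in \eqref{eq:rotop}. By the paragraph immediately preceding the lemma, the latter space is in turn homeomorphic to a subspace of the compact Hausdorff space $2^{\eta[G]}$ via characteristic functions. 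So it is enough to show that the image of $\Rord{(\eta[G])}$ (respectively $\Ord{(\eta[G])}$) in $2^{\eta[G]}$ is a closed subset, from which compactness follows at once.

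Next, I would translate each defining axiom of a right order extending $\eta[G]^+$ into a closed condition using the subbasic clopens $\Xa(a)$ and $\Xa^c(a)$. The submonoid condition becomes
\[
\bigcap_{a,b\in \eta[G]}\bigl(\Xa^c(a)\cup \Xa^c(b)\cup \Xa(ab)\bigr);
\]
totality $C\cup\iv{C}=\eta[G]$ becomes $\bigcap_{a}\bigl(\Xa(a)\cup\Xa(\iv{a})\bigr)$; antisymmetry $C\cap\iv{C}=\{\e\}$ becomes $\bigcap_{a\neq \e}\bigl(\Xa^c(a)\cup\Xa^c(\iv{a})\bigr)$; and containment of the positive cone becomes $\bigcap_{a\in \eta[G]^+}\Xa(a)$. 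Each of these is an intersection of finite unions of subbasic clopens, hence closed in $2^{\eta[G]}$; properness of the pre-cone is automatic from antisymmetry on any non-trivial group. Intersecting these yields that $\Rord{(\eta[G])}$ is closed, which settles the case $\V=\L$.

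For the cases $\V\in\{\Rep,\A\}$ one additionally imposes normality of $C$, namely $aC\iv{a}\subseteq C$ for every $a\in\eta[G]$, which translates to the closed set $\bigcap_{a,b\in\eta[G]}\bigl(\Xa^c(b)\cup\Xa(ab\iv{a})\bigr)$; intersecting with the previous collection shows that $\Ord{(\eta[G])}$ is also closed in $2^{\eta[G]}$. In the Abelian case right orders and orders coincide, so the two descriptions agree.

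I expect no serious obstacle: once the reduction via Corollary~\ref{c:minetag} is in place, the axioms for a right order (respectively order) are Horn-like conditions involving only finitely many group elements at a time, each translating verbatim into a subbasic closed condition in $2^{\eta[G]}$, after which compactness follows as a closed subspace of a compact space. The only mild bookkeeping is to confirm that the homeomorphism of Theorem~\ref{t:minorders} identifies the topologies of \eqref{eq:rotop} with the one on $\Rord{(\eta[G])}$ induced from $2^{\eta[G]}$, which is immediate from matching the defining subbases.
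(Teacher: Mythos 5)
Your proof is correct and follows essentially the same route as the paper: reduce via Theorem~\ref{t:minorders} to the space of right orders (resp.\ orders) on $\eta[G]$, then observe that this space is a closed, hence compact, subspace of $2^{\eta[G]}$. The only difference is that you spell out the closedness by translating the axioms into subbasic closed conditions, whereas the paper simply cites~\cite[Problem~1.38]{CR2016} for that standard fact.
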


\begin{proof}
By Theorem~\ref{t:minorders}, if $\V$ is the variety of all $\ell$-groups (respectively, representable or Abelian $\ell$\nbd{-}groups), the minimal layer of $\Pord{G}{\V}$ is the space of right orders (resp., orders) on $\eta[G]$. We can now  conclude, since $\Rord{(\eta[G])}$ (resp., $\Ord{(\eta[G])}$) is a closed subspace of $2^{\eta[G]}$~\cite[Problem~1.38]{CR2016}.
\end{proof}

\begin{theorem}\label{t:compactin3vars}
Suppose $\V$ is the variety of all $\ell$-groups (respectively, representable or Abelian $\ell$-groups). For any partially ordered group $G$, the minimal spectrum $\Min{\Free{G}{\V}}$ is compact.
\end{theorem}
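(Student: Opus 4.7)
The plan is to reduce the compactness of $\Min{\Free{G}{\V}}$ to the already-established compactness of the space of right orders (respectively, orders) on $\eta[G]$. The bridge between these two spaces has been built in the preceding sections, so the argument should be short.

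First I would invoke Corollary~\ref{c:minetag}, which states that, for each of the three varieties under consideration, $\Min{\Free{G}{\V}}$ is homeomorphic to the space of right orders on $\eta[G]$ (in the case $\V = \L$) or of orders on $\eta[G]$ (in the cases $\V = \Rep$ and $\V = \A$). Since compactness is a topological property, it suffices to check that these spaces of (right) orders on $\eta[G]$ are compact.

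Next I would apply Lemma~\ref{l:compactrord}: combining Theorem~\ref{t:minorders} with the fact that $\Rord{(\eta[G])}$ (respectively, $\Ord{(\eta[G])}$) embeds as a closed subspace of the Stone space $2^{\eta[G]}$, the minimal layer of $\Pord{G}{\V}$ is compact in all three cases. Via the identifications above, this immediately gives compactness of $\Min{\Free{G}{\V}}$.

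Alternatively, and with the same mathematical content, one could quote Corollary~\ref{c:cbapolars}, which asserts that the minimal layer of $\Pord{G}{\V}$ is compact if and only if condition~(2) of Theorem~\ref{t:bapolars}, namely the compactness of $\Min{\Free{G}{\V}}$, holds; then Lemma~\ref{l:compactrord} supplies the required compactness on the pre-order side. Either route is essentially routine given the machinery already in place; no new obstacle arises, since the substantive work has been carried out in Theorem~\ref{t:minorders}, Theorem~\ref{t:bapolars}, and Lemma~\ref{l:compactrord}.
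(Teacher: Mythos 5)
Your proposal is correct and follows essentially the same route as the paper, whose proof simply combines Theorem~\ref{t:main} (equivalently, Corollary~\ref{c:minetag}) with Lemma~\ref{l:compactrord}; your alternative via Corollary~\ref{c:cbapolars} is just a repackaging of the same chain of results.
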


\begin{proof}
Combine Theorem~\ref{t:main} and Lemma~\ref{l:compactrord}.
\end{proof}

\begin{theorem}\label{t:asbooleanspaces}
Suppose $\V$ is the variety of all $\ell$-groups (respectively, representable or Abelian $\ell$-groups). For any partially ordered group $G$, the minimal layer of $\Pord{G}{\V}$ is a Stone space with dual Boolean algebra $\Polp{\Free{G}{\V}}$.
\end{theorem}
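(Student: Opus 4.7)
The plan is to assemble the statement from results already established earlier in the paper; no fundamentally new argument is required.

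First, I would invoke Theorem~\ref{t:main} to transport the problem to the lattice-group side: the inclusion-preserving homeomorphism $\pi\colon \Spec{\Free{G}{\V}} \to \Pord{G}{\V}$ sends inclusion-minimal primes to inclusion-minimal right pre-orders, so the minimal layer of $\Pord{G}{\V}$ is homeomorphic to $\Min{\Free{G}{\V}}$ (this is essentially Corollary~\ref{c:minetag}, but the statement we need here is purely topological and does not depend on the characterisation of minimal elements as right orders).

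Next, I would apply Theorem~\ref{t:compactin3vars}, which tells us that in each of the three varieties under consideration $\Min{\Free{G}{\V}}$ is compact. Condition~(2) of Theorem~\ref{t:bapolars} is therefore satisfied by $H\coloneqq \Free{G}{\V}$, so condition~(1) gives us that $\Polp{H}$ is a Boolean subalgebra of $\Pol{H}$, and the last sentence of Theorem~\ref{t:bapolars} yields $\Min{H} = \Qin{H}$.

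Finally, I would combine this with Lemma~\ref{l:queen}, which provides a homeomorphism $f^{*}\colon X(\Polp{H}) \to \Qin{H}$ where $X(\Polp{H})$ is the Stone dual space (in the sense of \eqref{eq:widehat}) of the distributive lattice $\Polp{H}$. Since we have just established that $\Polp{H}$ is in fact a Boolean algebra, $X(\Polp{H})$ is, by the classical Stone representation theorem, a Stone space whose dual Boolean algebra is $\Polp{H}$. Chaining the homeomorphisms
\[
\text{minimal layer of }\Pord{G}{\V}\ \cong\ \Min{\Free{G}{\V}}\ =\ \Qin{\Free{G}{\V}}\ \cong\ X(\Polp{\Free{G}{\V}})
\]
yields the conclusion. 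The only potential subtlety is ensuring that \emph{minimal} primes in $\Spec{\Free{G}{\V}}$ correspond under $\pi$ to minimal elements of $\Pord{G}{\V}$, but this is immediate from the fact that $\pi$ is an inclusion-preserving bijection (Theorem~\ref{t:mutinv}); no real obstacle arises.
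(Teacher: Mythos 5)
Your proof is correct and follows essentially the same route as the paper, whose own proof simply combines Corollary~\ref{c:cbapolars} and Lemma~\ref{l:compactrord}; you have merely unpacked those ingredients (compactness of the minimal layer via Theorem~\ref{t:compactin3vars}, the equivalences and the equality $\Min = \Qin$ from Theorem~\ref{t:bapolars}, and the Stone-duality homeomorphism of Lemma~\ref{l:queen}) into an explicit chain of homeomorphisms. No gaps.
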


\begin{proof}
Combine Corollary~\ref{c:cbapolars} and Lemma~\ref{l:compactrord}.
\end{proof}

\section{Representable varieties and pre-orders}\label{s:rep}

It is classical that the set $\Conn{H}$ of ideals of $H$ ordered by inclusion is a complete sublattice of $\Con{H}$, and hence of the lattice of subgroups of~$H$~\cite[Theorem 8.7]{Darnel95}. We write $\Ideal{(S)}$ to denote the ideal generated by $S \sst H$. If $x \in H$, we write $\Ideal(x)$ for the {\em principal ideal} $\Ideal(\{\, x \,\})$ generated by $\{\, x \,\}$, and $\Connp{H}$ for the collection of all the principal ideals of $H$. 

\begin{proposition}\label{p:convgenerated}
For any $\ell$-group $H$, the convex $\ell$-subgroup $\Ksg{(S)}$ generated by a subset $S \sst H$ is the set
\begin{equation}
\{\, x \in H \mid |x| \leq s\text{ for some }s \in \langle |S| \rangle \,\},
\end{equation}
where $|S| = \{\, |y| \mid y \in S \,\}$, and $\langle T \rangle$ is the monoid generated by a subset $T$ of $H$.
\end{proposition}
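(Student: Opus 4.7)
Let $K$ denote the set on the right-hand side. The plan is to prove $K = \Ksg(S)$ by double inclusion: first, that every convex $\ell$-subgroup containing $S$ must contain $K$, and second, that $K$ itself is a convex $\ell$-subgroup containing $S$, so that the minimality of $\Ksg(S)$ closes the argument.

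For the easy inclusion $K \subseteq \Ksg(S)$: any convex $\ell$-subgroup $\ksg$ containing $S$ automatically contains $|s| = s \vee s^{-1}$ for every $s \in S$, hence contains the whole submonoid $\langle|S|\rangle$ under multiplication. Given $x \in K$, pick $s \in \langle|S|\rangle$ with $|x| \leq s$; since $s^{-1} \leq x \leq s$ and both bounds lie in $\ksg$, convexity forces $x \in \ksg$.

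For $\Ksg(S) \subseteq K$ I verify that $K$ is a convex $\ell$-subgroup containing $S$. Containment of $S$ is immediate from $|s| \leq |s| \in \langle|S|\rangle$, and convexity is transparent from the definition: if $|z| \leq |k|$ for some $k \in K$, then $|z| \leq s$ for some $s \in \langle|S|\rangle$. Closure under inverses is free since $|x^{-1}| = |x|$. The substantive content is closure under products and under $\vee, \wedge$. The $\ell$-group inequalities $xy \leq |x||y|$ and $(xy)^{-1} = y^{-1}x^{-1} \leq |y||x|$ give $|xy| \leq |x||y| \vee |y||x|$, while $|x \vee y|, |x \wedge y| \leq |x| \vee |y|$ is standard. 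Combined with the choice of $p, q \in \langle|S|\rangle$ such that $|x| \leq p$ and $|y| \leq q$, these reduce everything to bounding joins of monoid products by a single monoid product.

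The main obstacle is exactly this last step: since $\langle|S|\rangle$ is only closed under multiplication, not under $\vee$, one cannot simply assert that $|x||y| \vee |y||x|$ lies in $\langle|S|\rangle$. The resolution is the elementary lemma that for any $a, b \geq \e$ in an $\ell$-group, $a \vee b \leq ab$ (since $a = a \cdot \e \leq ab$ using $b \geq \e$, and $b = \e \cdot b \leq ab$ using $a \geq \e$). Applying this with $a = |x||y|$ and $b = |y||x|$ yields $|xy| \leq |x||y||y||x| \leq pqqp \in \langle|S|\rangle$; the same trick gives $|x \vee y|, |x \wedge y| \leq |x| \vee |y| \leq |x||y| \leq pq \in \langle|S|\rangle$. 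Together with $|x^{-1}| = |x|$, this shows $K$ is closed under the group and lattice operations, completing the verification and hence the proposition.
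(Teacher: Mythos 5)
Your proof is correct and follows the same overall strategy as the paper's: both dispose of the inclusion $K\subseteq\Ksg(S)$ by convexity and then verify directly that $K$ is a convex $\ell$-subgroup containing $S$. The only divergence is in the closure computations—you bound $|xy|\leq |x||y|\vee|y||x|\leq |x||y||y||x|$ and $|x\vee y|,\,|x\wedge y|\leq |x|\vee|y|\leq |x||y|$ via the observation that $a\vee b\leq ab$ for $a,b\geq\e$, whereas the paper invokes the law $|xy|\leq|x||y||x|$ and the translation identity $x\vee y=(x\iv{y}\vee\e)y$—and both routes are equally valid.
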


\begin{proof}
Set 
\[
K \coloneqq \{\, x \in H \mid |x| \leq s\text{ for some }s \in \langle |S| \rangle \,\}.
\]
It is immediate that $K \sst \Ksg{(S)}$. For the other inclusion, we show that $K$ is a convex $\ell$-subgroup containing $S$. First of all, $S \sst K$, and $K$ is a subgroup of~$H$, as $|x| \leq s$ and $|y| \leq t$ implies $|xy| \leq |x||y||x| \leq sts$. (Recall $|xy| \leq |x||y||x|$ is an $\ell$-group law, see~\cite[Theorem 4.13]{Darnel95}.) To show that $K$ is a sublattice, observe that if $x, y \in K$, then $x \vee \iv{x} \leq s$ and $y \vee \iv{y} \leq t$ for $s, t \in \langle |S| \rangle$ or, equivalently,  $x \leq s$, $\iv{x} \leq s$, $y \leq t$, and $\iv{y} \leq t$. Therefore, $x\iv{y} \leq st$, and $y\iv{x} \leq ts$. Now, since $\e \leq s,t$, the latter entails 
\[
|x\iv{y}| = x\iv{y} \vee y\iv{x} \leq sts \in \langle |S| \rangle.
\] 
Thus, $x\iv{y} \in K$ and hence, also $x\iv{y} \vee \e \in K$, since 
\[
\e \leq x\iv{y} \vee \e \leq |x\iv{y}| \leq sts.
\]
Therefore, $x \vee y = (x\iv{y} \vee \e)y \in K$. We can now conclude, since 
\[
x \wedge y = \iv{(\iv{x} \vee \iv{y})} \in K.
\] 
It remains to show that $K$ is convex. For this, pick $x, y \in K$, and $x \leq z \leq y$ for some $z \in H$. But then, $\e \leq z\iv{x} \leq y\iv{x}$ and hence, $|z\iv{x}| \leq |y\iv{x}| \leq s$, for some $s \in \langle |S| \rangle$. Therefore, $z\iv{x} \in K$, and $(z\iv{x})x = z \in K$.
\end{proof}

We write $N(S)$ to denote the {\em normal closure} $\{\, \iv{y}xy \mid y \in H, x \in S \,\}$ of $S$.

\begin{proposition}\label{p:idealgenerated}
For any $\ell$-group $H$, the ideal $\Ideal{(S)}$ generated by a subset $S \sst H$ is the convex $\ell$-subgroup generated by the normal closure $N(S)$ of $S$; equivalently, $\Ideal{(S)} = \Ksg{(N(S))}$.
\end{proposition}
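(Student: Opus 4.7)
The plan is to show the two inclusions $\Ksg(N(S)) \subseteq \Ideal(S)$ and $\Ideal(S) \subseteq \Ksg(N(S))$ separately.

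For the first inclusion, I would argue as follows. Since $\Ideal(S)$ is normal and contains $S$, it contains every conjugate $\iv{y}xy$ with $y \in H$ and $x \in S$; that is, $N(S) \subseteq \Ideal(S)$. As $\Ideal(S)$ is in particular a convex $\ell$-subgroup containing $N(S)$, and $\Ksg(N(S))$ is by definition the smallest such, the inclusion $\Ksg(N(S)) \subseteq \Ideal(S)$ follows.

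For the reverse inclusion, the strategy is to show that $\Ksg(N(S))$ is itself an ideal containing $S$; minimality of $\Ideal(S)$ among such ideals will then complete the proof. It evidently contains $S$ (take $y = \e$ in the definition of $N(S)$), so the only nontrivial point is normality. Here I would use two facts. First, for any $h \in H$, conjugation $x \mapsto \iv{h}xh$ is an $\ell$-automorphism of $H$; in particular $|\iv{h}xh| = \iv{h}|x|h$ for every $x \in H$, and consequently $|N(S)|$ coincides with $N(|S|)=\{\,\iv{y}|x|y \mid y\in H, x\in S\,\}$. Second, by Proposition~\ref{p:convgenerated}, an element $z$ lies in $\Ksg(N(S))$ precisely when $|z|\leq s$ for some $s$ in the monoid $\langle |N(S)|\rangle$.

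Given this, the main verification reduces to the observation that $\langle |N(S)|\rangle$ is closed under conjugation by arbitrary elements of $H$: if $s = s_1\cdots s_n$ with each $s_i=\iv{y_i}|x_i|y_i \in N(|S|)$, then
\[
\iv{h}sh = (\iv{h}s_1 h)\cdots(\iv{h}s_n h), \qquad \iv{h}s_i h = \iv{(y_i h)}|x_i|(y_i h) \in N(|S|),
\]
so $\iv{h}sh \in \langle |N(S)|\rangle$. Combining with $|\iv{h}zh|=\iv{h}|z|h \leq \iv{h}sh$, we obtain $\iv{h}zh \in \Ksg(N(S))$, proving normality. The main (minor) obstacle is just making sure that conjugation genuinely commutes with the absolute value in an $\ell$-group, which is a standard consequence of the fact that inner automorphisms of an $\ell$-group preserve the lattice operations.
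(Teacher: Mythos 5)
Your proposal is correct and follows essentially the same route as the paper: both reduce the problem to showing that $\Ksg(N(S))$ is normal, using Proposition~\ref{p:convgenerated} to describe its elements via the monoid $\langle N(|S|)\rangle = \langle |N(S)|\rangle$, and then observing that this monoid is closed under conjugation and that conjugation commutes with the absolute value. The only cosmetic difference is that you spell out the easy inclusion $\Ksg(N(S))\sst\Ideal(S)$, which the paper dismisses as clear.
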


\begin{proof}
By Proposition~\ref{p:convgenerated}, we know $\Ksg{(N(S))}=\Ksg{(\langle |N(S)|\rangle)}$, or equivalently, $\Ksg{(N(S))}=\Ksg{(\langle N(|S|)\rangle)}$. Clearly, $\Ksg{(N(S))} \sst \Ideal{(S)}$. To conclude, it suffices to show that $\Ksg{(N(S))}$ is normal. But this is immediate, since 
\[
|x| \leq \prod_{I} \iv{y}_is_iy_i,
\] 
for $s_i \in |S|$, $y_i \in H$, and $I$ finite, implies 
\[
|\iv{z}xz| = \iv{z}|x|z \leq \iv{z}\big{(}\prod_{I} \iv{y}_is_iy_i\big{)}z = \prod_{I} \iv{z}\iv{y}_is_iy_iz \in \langle N(|S|)\rangle,
\]
and hence, $\iv{z}xz \in \Ksg{(N(S))}$ for every $z \in H$.
\end{proof}

\begin{proposition}\label{p:princcong}
For any $\ell$-group $H$, and for any $x, y \in H^+$, $z \in H$, the following hold.
\begin{enumerate}
\item [{\rm (1)}] $\Ideal(z) = \Ideal(|z|) = \{\, h \in H \mid |h| \leq \prod_{i \in I} \iv{w}_i|z|w_i, \text{ for finitely many }w_i \in H \,\}$. 
\item [{\rm (2)}] $\Ideal(x \wedge y) \sst \Ideal(x) \cap \Ideal(y)$ and $\Ideal(x \vee y) = \Ideal(x) \vee \Ideal(y)$. 
\end{enumerate}
\end{proposition}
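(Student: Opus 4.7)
The plan is to reduce both parts to the two preceding propositions (\ref{p:convgenerated} and~\ref{p:idealgenerated}) together with elementary $\ell$-group manipulations, without invoking any deeper machinery.

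For part~(1), I would first establish $\Ideal(z) = \Ideal(|z|)$ by a two-step containment: any ideal containing $z$ is a sublattice subgroup and so contains $|z| = z \vee \iv{z}$, while any ideal containing $|z|$ is convex and satisfies $\iv{|z|} \leq z \leq |z|$, whence $z$ lies in it. For the explicit description, apply Proposition~\ref{p:idealgenerated} with $S = \{z\}$ to obtain $\Ideal(z) = \Ksg(N(\{z\}))$, where $N(\{z\}) = \{\,\iv{y}zy \mid y \in H\,\}$. Invoking the $\ell$-group identity $|\iv{y}zy| = \iv{y}|z|y$ (which follows because conjugation by a fixed group element is a lattice automorphism, so it commutes with $\vee$), one gets $|N(\{z\})| = \{\,\iv{y}|z|y \mid y \in H\,\}$. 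Then Proposition~\ref{p:convgenerated} applied to $N(\{z\})$ yields precisely the stated set: $|h| \leq s$ for some $s$ in the monoid generated by $\{\,\iv{w}|z|w \mid w \in H\,\}$, that is, $|h|\leq \prod_{i\in I}\iv{w}_i|z|w_i$ for some finite family $\{w_i\}_{i\in I}\sst H$.

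For part~(2), the $\wedge$-inclusion is immediate by convexity: from $x,y\in H^+$ we have $\e \leq x\wedge y \leq x$ and $\e \leq x\wedge y \leq y$, so $x\wedge y \in \Ideal(x) \cap \Ideal(y)$; since the intersection of ideals is itself an ideal, $\Ideal(x\wedge y) \sst \Ideal(x) \cap \Ideal(y)$. For the $\vee$-equality, $\Ideal(x) \vee \Ideal(y)$ is an ideal containing both $x$ and $y$, hence also $x\vee y$ since ideals are sublattice subgroups, giving $\Ideal(x\vee y) \sst \Ideal(x) \vee \Ideal(y)$. Conversely, $\e \leq x \leq x\vee y$ and $\e \leq y \leq x\vee y$ together with convexity of $\Ideal(x\vee y)$ yield $x,y \in \Ideal(x\vee y)$, so $\Ideal(x) \vee \Ideal(y) \sst \Ideal(x\vee y)$.

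The only nontrivial step is verifying the conjugation-absolute-value identity $|\iv{y}zy| = \iv{y}|z|y$ in part~(1); everything else is a matter of unravelling the characterisations of $\Ksg(S)$ and $\Ideal(S)$ that have already been proved. I would note, in passing, that unlike the case for principal convex $\ell$-subgroups (Proposition~\ref{p:princcon}(2)), equality need not hold in $\Ideal(x\wedge y) \sst \Ideal(x) \cap \Ideal(y)$; only the inclusion is asserted here because the normal closure operation prevents the argument used for $\Ksg$ from going through in general.
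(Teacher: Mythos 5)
Your proposal is correct and follows essentially the same route as the paper, which simply cites Proposition~\ref{p:idealgenerated} for part~(1) (the identity $|\iv{y}zy|=\iv{y}|z|y$ you verify is already implicit in that proposition's proof, where $\Ksg(N(S))=\Ksg(\langle N(|S|)\rangle)$ is used) and, for part~(2), uses exactly the two facts you invoke: that a convex $\ell$-subgroup contains $x,y\in H^+$ if and only if it contains $x\vee y$, and that $x\wedge y$ lies in every convex $\ell$-subgroup containing $x$ or $y$. Your closing remark that the reverse inclusion for $\wedge$ can fail is also consistent with the paper, where that reverse inclusion is later shown (Lemma~\ref{l:reprcharcong}) to characterise representability.
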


\begin{proof}
(1) is immediate from Proposition~\ref{p:idealgenerated}. Item (2) follows from the fact that $\ksg \in \Con{H}$ contains $x, y \in H^+$ if, and only if, $x \vee y \in \ksg$; also, $x \wedge y$ is contained in every convex $\ell$-subgroup containing $x$ or $y$.
\end{proof}

\noindent In light of Proposition~\ref{p:princcong}, when the set $\Connp{H}$ of principal ideals of any $\ell$-group $H$ is ordered by inclusion, it is a join\nbd{-}semilattice with minimum $\Ideal(\e) = \{\, \e \,\}$, and a subsemilattice of $\Conn{H}$ (and therefore of $\Con{H}$).

\begin{proposition}\label{p:polnorm}
For any $\ell$-group $H$, the following are equivalent.
\begin{enumerate}
\item $H$ is representable.
\item Each polar is normal.
\end{enumerate}
\end{proposition}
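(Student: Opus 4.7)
The plan is to leverage Proposition~\ref{p:representable}, which identifies representability with the normality of minimal prime subgroups, together with the description of minimal primes in Proposition~\ref{p:minprimes}.

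For the implication (2)$\Rightarrow$(1), I would invoke Proposition~\ref{p:minprimes} to write each minimal prime subgroup $\m \in \Min{H}$ as the union $\m = \bigcup\{\, x^{\perp} \mid x \notin \m \,\}$ of principal polars. Each $x^{\perp}$ is assumed normal in $H$, and any union of normal subgroups is again a normal subgroup (conjugation commutes with set-theoretic unions). Hence every minimal prime is an ideal, and Proposition~\ref{p:representable} yields that $H$ is representable.

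For the implication (1)$\Rightarrow$(2), I would realise $H$ as an $\ell$-subgroup of a product $\prod_i H_i$ of totally ordered groups (i.e., use a subdirect representation). In any chain, $|a|\wedge|b|=\e$ forces $a=\e$ or $b=\e$, so for $x,y\in H$ the relation $x\perp y$ is equivalent to the coordinate-wise condition: for every index $i$, either $x_i=\e$ or $y_i=\e$. Since conjugation acts componentwise and $g_i^{-1}x_i g_i=\e$ if and only if $x_i=\e$, the orthogonality relation is preserved under conjugation in the first argument, i.e., $x\perp y$ implies $g^{-1}xg\perp y$ for every $g\in H$. Given an arbitrary polar $P=S^{\perp}$, for $x\in P$, $g\in H$, and $s\in S$ we have $x\perp s$, hence $g^{-1}xg\perp s$ by the previous observation; therefore $g^{-1}xg\in S^{\perp}=P$, proving that $P$ is normal.

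The main obstacle, if any, is ensuring that orthogonality in $H$ is correctly read off from the subdirect representation; this reduces to the fact that $H\hookrightarrow\prod_i H_i$ is an $\ell$-embedding, so meets (and in particular computations of $|x|\wedge|y|$) coincide with the coordinatewise meets in $\prod_i H_i$. Apart from this routine verification, both directions are short and pivot on Propositions~\ref{p:minprimes} and~\ref{p:representable}.
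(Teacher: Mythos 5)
Your argument is correct. The paper itself offers no proof of this proposition---it simply cites Darnel's Proposition 47.1, which is the same omnibus result invoked for Proposition~\ref{p:representable}---so there is no in-paper argument to compare against; what you have written is essentially the standard textbook proof. Both directions check out: for (2)$\Rightarrow$(1), a minimal prime $\m=\bigcup\{\,x^{\perp}\mid x\notin\m\,\}$ is a union of sets each closed under conjugation, hence itself closed under conjugation, and since $\m$ is already a convex $\ell$-subgroup it is an ideal, so Proposition~\ref{p:representable} applies; for (1)$\Rightarrow$(2), the reduction of $|x|\wedge|y|=\e$ to the coordinatewise alternative ``$x_t=\e$ or $y_t=\e$'' is legitimate precisely because the subdirect representation is an $\ell$-embedding and each factor is a chain, and conjugation acting coordinatewise then preserves orthogonality, so every polar $S^{\perp}$ is closed under conjugation. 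The only point worth flagging is hygiene rather than substance: your proof of (2)$\Rightarrow$(1) leans on Proposition~\ref{p:representable}, which in the paper is itself only a citation to the same source; within the paper's logical structure this is unobjectionable, but if one wanted a fully self-contained treatment one would need an independent proof of that equivalence as well.
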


\begin{proof}
See, e.g.,\ \cite[Proposition 47.1]{Darnel95}.
\end{proof}

\begin{remark}
Lemma~\ref{l:queen} and Proposition~\ref{p:polnorm} ensure $\Qin{H} \sst \Specn{H}$ for every representable $\ell$-group $H$.
\end{remark}

\begin{lemma}\label{l:reprcharcong}
For any $\ell$-group $H$, the following are equivalent.
\begin{enumerate}
\item $H$ is representable.
\item For every $x, y \in H^+$, $\Ideal{(x)} \cap \Ideal{(y)} \sst \Ideal{(x \wedge y)}$.
\end{enumerate}
\end{lemma}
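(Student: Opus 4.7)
My plan is to prove the equivalence by passing through Proposition~\ref{p:polnorm}, which characterises representability as normality of polars, together with the description of principal ideals given in Proposition~\ref{p:princcong}(1).

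For the direction (2) $\Rightarrow$ (1) I would argue as follows. Taking $x, y \in H^+$ with $x \wedge y = \e$, it suffices to show that $\{x\}^\perp$ is closed under conjugation. For any $z \in H$, the element $\iv{z}yz \wedge x$ is positive and dominated by both $x$ and $\iv{z}yz$, so it lies in $\Ideal(\iv{z}yz) \cap \Ideal(x)$; but ideals are normal, so $\Ideal(\iv{z}yz) = \Ideal(y)$, and (2) forces this intersection into $\Ideal(x \wedge y) = \{\e\}$. Hence $\iv{z}yz \perp x$. Since every polar is an intersection of polars of single positive elements, and each such polar is then normal, Proposition~\ref{p:polnorm} yields representability.

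For the direction (1) $\Rightarrow$ (2), the core step will be the orthogonal case: to show that if $x, y \in H^+$ with $x \wedge y = \e$ then $\Ideal(x) \cap \Ideal(y) = \{\e\}$. The idea is that Proposition~\ref{p:polnorm} makes $\{x\}^\perp$ a normal convex $\ell$-subgroup, i.e., an ideal, which already contains $y$ and hence $\Ideal(y)$. For $z$ in $\Ideal(x) \cap \Ideal(y)$ this gives $|z| \wedge x = \e$; I would then use Proposition~\ref{p:princcong}(1) to write $|z| \leq \prod_i \iv{w}_i x w_i$, and invoke normality of the polar $\{|z|\}^\perp$ together with the fact that $\{|z|\}^\perp$ is a subgroup to conclude that $|z| \wedge \prod_i \iv{w}_i x w_i = \e$, forcing $|z| = \e$.

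To handle the general case, I would pass to the quotient $H/\Ideal(x \wedge y)$, which is representable because $\Rep$ is a variety. The images $\bar{x}, \bar{y}$ are positive and orthogonal, so by the orthogonal case $\Ideal(\bar{x}) \cap \Ideal(\bar{y}) = \{\e\}$. A short check based on Proposition~\ref{p:princcong}(1) shows that the quotient map sends $\Ideal(x)$ into $\Ideal(\bar{x})$ and likewise for $y$, so $\Ideal(x) \cap \Ideal(y)$ lands in $\{\e\}$ and therefore lies in the kernel $\Ideal(x \wedge y)$. I expect the main obstacle to be the orthogonal claim above: one must carefully chain two distinct invocations of normality---first of $\{x\}^\perp$ to push $\Ideal(y)$ inside it, then of $\{|z|\}^\perp$ to propagate orthogonality to each conjugate $\iv{w}_i x w_i$ and thence to their product---in order to collapse the bound on $|z|$ to the equality $|z| = \e$.
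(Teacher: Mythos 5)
Your proposal is correct. For the direction from the ideal condition to representability, your argument is essentially the paper's: both take orthogonal $x,y\in H^+$, observe that $\iv{z}yz\wedge x$ is a positive element of $\Ideal(x)\cap\Ideal(y)\sst\Ideal(x\wedge y)=\{\,\e\,\}$, conclude that $x^{\perp}$ is closed under conjugation, and invoke Proposition~\ref{p:polnorm}. For the converse the two proofs genuinely diverge. The paper fixes a subdirect representation of $H$ inside a product of totally ordered groups $C_t$ and argues coordinate-wise: in each factor one of $(x)_t,(y)_t$ equals $(x\wedge y)_t$, so the bound $|z|\leq\prod_I\iv{a}_ixa_i$ (or the analogous bound coming from $y$) becomes a bound by conjugates of $x\wedge y$, whence $z\in\Ideal(x\wedge y)$. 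You instead stay inside $H$: you settle the orthogonal case $x\wedge y=\e$ by noting that $x^{\perp}$ is an ideal containing $\Ideal(y)$, so any $z\in\Ideal(x)\cap\Ideal(y)$ satisfies $|z|\perp x$, and normality of $|z|^{\perp}$ plus the subgroup property propagates orthogonality to the product $\prod_i\iv{w}_ixw_i$ dominating $|z|$, forcing $|z|=\e$; you then reduce the general case to this one through the representable quotient $H/\Ideal(x\wedge y)$, where $\bar{x}\wedge\bar{y}=\e$, and pull back along the kernel. Both routes are sound. The paper's is shorter once a subdirect decomposition is chosen (though it silently concatenates the two bounding products when the relevant one varies with $t$); yours is representation-free, relying only on normality of polars and closure of the variety of representable $\ell$-groups under quotients. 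A marginal shortening of your orthogonal case: under representability $x^{\perp\perp}$ is an ideal containing $x$, so $\Ideal(x)\sst x^{\perp\perp}$ and $\Ideal(y)\sst x^{\perp}$, and $x^{\perp}\cap x^{\perp\perp}=\{\,\e\,\}$ finishes it at once.
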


\begin{proof}
For (1) $\Rightarrow$ (2), we use the characterising property of representable $\ell$\nbd{-}groups stated in Proposition~\ref{p:polnorm}: we show that $\Ideal{(x)} \cap \Ideal{(y)} \sst \Ideal{(x \wedge y)}$ entails that $x^{\perp}$ is normal, for every $x \in H$. Assume $x \wedge y = \e$, for some $x, y \in H^+$, that is, assume $y \in x^{\perp}$ for $x, y \in H^+$. Thus, 
\begin{equation}
\Ideal{(x)} \cap \Ideal{(y)} = \Ideal{(x \wedge y)} = \{\, \e \,\}.
\end{equation}
This means that $x \wedge \iv{a}ya = \e$ for every $a \in H$, as $(x \wedge \iv{a}ya) \in \Ideal{(x)} \cap \Ideal{(y)} = \{\, \e \,\}$ for every $a \in H$. Thus, $\iv{a}ya \in x^{\perp}$, for every $a \in H$. 

For (2) $\Rightarrow$ (1), we show that $\Ideal{(x)} \cap \Ideal{(y)} \sst \Ideal{(x \wedge y)}$ for every $x, y$ in the positive cone of a representable $\ell$-group $H$. For this, let $H$ be a subdirect product of $\prod_{t \in T} C_t$, for $C_t$ totally ordered groups. If $|z| \in \Ideal{(x)} \cap \Ideal{(y)}$, Proposition~\ref{p:idealgenerated} entails 
\[
|z| \leq \prod_{I} \iv{a}_ixa_i,\text{ and }|z| \leq \prod_{J} \iv{b}_jyb_j.
\]
Thus, in a given factor $C_t$ of the product $\prod_{t \in T} C_t$,
\[
(|z|)_t \leq (\prod_{I} \iv{a}_ixa_i)_t,\text{ and }(|z|)_t \leq (\prod_{J} \iv{b}_jyb_j)_t.
\]
Since the group operation is also defined coordinate-wise, we obtain 
\[
(|z|)_t \leq \prod_{I} (\iv{a}_i)_t(x)_t(a_i)_t,\text{ and }(|z|)_t \leq \prod_{J} (\iv{b}_j)_t(y)_t(b_j)_t.
\]
Without loss of generality, we can assume $(x)_t \leq (y)_t$, and hence
\[
(|z|)_t \leq \prod_{I} (\iv{a}_i)_t(x)_t(a_i)_t = \prod_{I} (\iv{a}_i)_t(x \wedge y)_t(a_i)_t.
\]
Since $t \in T$ was arbitrary, $|z| \leq \prod_{I} \iv{c}_i(x \wedge y)c_i$, for some $c_i \in H$, and $I$ finite. Thus, $z \in \Ideal{(x \wedge y)}$.
\end{proof}

Consider the map 
\begin{equation}\label{eq:ontoslh}
\Conp{H} \xrightarrow{ \ g \ } \Conn{H}
\end{equation}
defined by $\Ksg{(x)} \mapsto \Ideal{(x)}$, for $x \in H^+$. 

\begin{theorem}\label{t:latticehomorep}
For any $\ell$-group $H$, the map $g \colon \Conp{H} \to \Conn{H}$ defined in \eqref{eq:ontoslh} is a join\nbd{-}semilattice homomorphism preserving minimum such that 
\[
g[\Conp{H}]=\Connp{H}.
\] 
Moreover, it is a lattice homomorphism if, and only if, $H$ is representable.
\end{theorem}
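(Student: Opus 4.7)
The statement bundles several separate claims, so the plan is to dispatch them in sequence. First, I would verify that $g$ is well-defined: if $\Ksg{(x)} = \Ksg{(y)}$, then, using the elementary fact $\Ksg{(z)} \sst \Ideal{(z)}$ (an ideal is in particular a convex $\ell$-subgroup), one has $y \in \Ksg{(y)} = \Ksg{(x)} \sst \Ideal{(x)}$, hence $\Ideal{(y)} \sst \Ideal{(x)}$; by symmetry $\Ideal{(x)} = \Ideal{(y)}$. Preservation of the minimum is then immediate: $g(\{\e\}) = g(\Ksg{(\e)}) = \Ideal{(\e)} = \{\e\}$.

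For binary joins, I would appeal to Proposition~\ref{p:princcon}.(2) to compute joins in $\Conp{H}$ as $\Ksg{(x)} \vee \Ksg{(y)} = \Ksg{(x \vee y)}$, and to Proposition~\ref{p:princcong}.(2) to obtain $\Ideal{(x \vee y)} = \Ideal{(x)} \vee \Ideal{(y)}$; combining these gives the required identity
\[
g(\Ksg{(x)} \vee \Ksg{(y)}) = \Ideal{(x \vee y)} = g(\Ksg{(x)}) \vee g(\Ksg{(y)}).
\]
Note that by Proposition~\ref{p:princcon}.(1) every element of $\Conp{H}$ is generated by a positive element, so restricting to $x, y \in H^+$ involves no loss of generality. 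That $g[\Conp{H}] = \Connp{H}$ is then immediate from the definitions of the two sets.

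Only the final equivalence requires a small argument. In both $\Conp{H}$ and $\Conn{H}$, meets coincide with intersection (in $\Conp{H}$ by Proposition~\ref{p:princcon}.(2); in $\Conn{H}$ because it is a complete sublattice of $\Con{H}$, whose meets are intersections). Hence, given that join preservation has already been established, $g$ is a lattice homomorphism precisely when
\[
\Ideal{(x \wedge y)} = \Ideal{(x)} \cap \Ideal{(y)} \qquad \text{for all } x, y \in H^+.
\]
The inclusion $\sst$ holds unconditionally, since $x \wedge y$ lies in both $\Ideal{(x)}$ and $\Ideal{(y)}$. The reverse inclusion is exactly condition~(2) of Lemma~\ref{l:reprcharcong}, which has been shown there to be equivalent to representability of $H$.

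The substantive work (the subdirect product computation, together with the explicit description of ideals from Proposition~\ref{p:idealgenerated}) has already been absorbed into Lemma~\ref{l:reprcharcong}, so no genuine obstacle remains at this stage; the theorem merely assembles pieces that are now all in place. If anything is worth a moment of care, it is ensuring that meets in $\Conn{H}$ really do agree with set-theoretic intersection, which one reads off the fact that $\Conn{H}$ is a complete sublattice of $\Con{H}$, cited just before Proposition~\ref{p:convgenerated}.
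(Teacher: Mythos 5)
Your proposal is correct and follows essentially the same route as the paper's proof: well-definedness via $\Ksg(z)\sst\Ideal(z)$, join preservation via Propositions~\ref{p:princcon}.(2) and~\ref{p:princcong}.(2), and the reduction of the lattice-homomorphism condition to $\Ideal(x)\cap\Ideal(y)\sst\Ideal(x\wedge y)$, which is Lemma~\ref{l:reprcharcong}. The added remarks (reduction to positive generators, meets in $\Conn{H}$ being intersections) only make explicit what the paper leaves implicit.
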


\begin{proof}
First, observe that from $\Ksg{(x)} = \Ksg{(y)}$, it follows that $x \in \Ksg{(y)} \sst \Ideal{(y)}$ and $y \in \Ksg{(x)} \sst \Ideal{(x)}$. Thus, if $\Ksg{(x)} = \Ksg{(y)}$, then $\Ideal{(x)} = \Ideal{(y)}$. Hence, $g$ is well-defined. Further, it is clear that $g[\Conp{H}]=\Connp{H}$. By Proposition~\ref{p:princcon}.(2) and Proposition~\ref{p:princcong}.(2), 
\[
g(\Ksg{(x)} \vee \Ksg{(y)}) = g(\Ksg{(x \vee y)}) = \Ideal{(x \vee y)} = \Ideal{(x)} \vee \Ideal{(y)}.
\]
Finally, $\Ksg{(\e)} = \{\, \e \,\} = \Ideal{(\e)}$.

Since $g(\Ksg{(x)} \cap \Ksg{(y)}) =  \Ideal{(x)} \wedge \Ideal{(y)}$ if, and only if, $\Ideal{(x \wedge y)} = \Ideal{(x)} \cap \Ideal{(y)}$, the second statement follows from Lemma~\ref{l:reprcharcong}, Proposition~\ref{p:princcon}.(2), and Proposition~\ref{p:princcong}.(2).
\end{proof}

\begin{proposition}\label{p:compideal}
For any $\ell$-group $H$, the set $\Connp{H}$ consists precisely of the compact elements of $\Conn{H}$. 
\end{proposition}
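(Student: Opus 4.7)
The plan is to mirror the proof of Proposition~\ref{p:convcomp} for convex $\ell$-subgroups, adapted to the setting of ideals, exploiting the fact already recorded in the text that $\Conn{H}$ is a complete sublattice of $\Con{H}$. So I will establish the two inclusions separately: principal ideals are compact in $\Conn{H}$, and compact ideals are principal.

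For the first inclusion, fix $x \in H^+$ and suppose $\Ideal(x) \sst \bigvee_{i \in T} \Ideal_i$ for a family $\{\, \Ideal_i \,\}_{i \in T}$ of ideals of $H$. Since $\Conn{H}$ is a sublattice of $\Con{H}$, the join in $\Conn{H}$ agrees with the join in $\Con{H}$, and is therefore just the subgroup of $H$ generated by $\bigcup_i \Ideal_i$. Hence $x$ is a finite product of elements drawn from finitely many of the $\Ideal_i$; say $x \in \Ideal_{i_1} \vee \cdots \vee \Ideal_{i_n}$. The right-hand side is an ideal, so it contains $\Ideal(x)$, proving compactness.

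For the converse, let $\ksg \in \Conn{H}$ be compact. The evident identity $\ksg = \bigvee \{\, \Ideal(x) \mid x \in \ksg^+ \,\}$ in $\Conn{H}$, together with compactness, yields finitely many $x_1, \dots, x_n \in \ksg^+$ with $\ksg = \Ideal(x_1) \vee \cdots \vee \Ideal(x_n)$. By Proposition~\ref{p:princcong}(2), this join is precisely $\Ideal(x_1 \vee \cdots \vee x_n)$, a principal ideal. Hence $\ksg \in \Connp{H}$.

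I do not expect a serious obstacle, since both arguments are structural and draw only on material already in the excerpt: the sublattice property of $\Conn{H}$ in $\Con{H}$, the description of joins of convex $\ell$-subgroups as the subgroup generated by the union, and the lattice-theoretic identity $\Ideal(x) \vee \Ideal(y) = \Ideal(x \vee y)$ of Proposition~\ref{p:princcong}(2). The only point meriting care is the implicit use that the join of two ideals agrees with their join as convex $\ell$-subgroups, which is exactly what the sublattice statement gives us.
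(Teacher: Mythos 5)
Your proof is correct and follows essentially the same route as the paper's: the compact-implies-principal direction uses $\ksg=\bigvee\{\Ideal(x)\mid x\in\ksg\}$ together with $\Ideal(x_1)\vee\cdots\vee\Ideal(x_n)=\Ideal(x_1\vee\cdots\vee x_n)$, and the principal-implies-compact direction writes the generator as a finite product of elements of the joined ideals. Your explicit appeal to the complete-sublattice property to identify the join in $\Conn{H}$ with the subgroup generated by the union is exactly the step the paper uses implicitly.
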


\begin{proof}
Pick a compact element $\ksg$ of $\Conn{H}$, and note that $\ksg \sst \bigvee_{x \in \ksg}\Ideal{(x)}$. By compactness, also $\ksg \sst \Ideal{(x_1)} \vee \dots \vee \Ideal{(x_n)}$, for $x_1, \dots, x_n \in \ksg$. Hence, 
\[
\ksg = \Ideal{(x_1 \vee \dots \vee x_n)}
\]by Proposition~\ref{p:idealgenerated}.(2). Conversely, consider $\Ideal{(x)} \sst \bigvee_{J} \ksg_j$ for some $x \in H$, and $\ksg_j \in \Conn{H}$. But then, the element $|x| \in H^+$ equals $y_1\cdots y_m$, for some $y_1,\dots,y_m \in\bigcup_{J} \ksg_j$. Thus, $\Ideal{(x)} \sst \ksg_{j_1} \vee \dots \vee \ksg_{j_m}$, where $j_1, \dots, j_m \in J$ are such that $y_i \in \ksg_{j_i}$.
\end{proof}

\begin{corollary}\label{c:specngenspec}
For any representable $\ell$-group $H$, $\Specn{H}$ is homeomorphic to the dual space of the distributive lattice with minimum $\Connp{H}$. Hence, $\Specn{H}$ is generalised spectral. 
\end{corollary}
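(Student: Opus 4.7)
The approach is to mirror the proof of Theorem~\ref{t:specdual} with ideals in place of convex $\ell$-subgroups and prime ideals in place of prime subgroups. First, observe that the hypothesis of representability enters through Theorem~\ref{t:latticehomorep}: when $H$ is representable the map $g\colon \Conp{H}\to \Conn{H}$ is a lattice homomorphism onto $\Connp{H}$, so $\Connp{H}$ is a distributive sublattice of $\Conn{H}$ with minimum $\Ideal(\e)=\{\, \e \,\}$. By Proposition~\ref{p:compideal}, $\Connp{H}$ is exactly the set of compact elements of $\Conn{H}$. These are the two ingredients needed to apply Stone duality for distributive lattices with minimum to the dual space $X(\Connp{H})$.

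Next I would define the candidate homeomorphism
\[
h^*\colon X(\Connp{H})\longrightarrow \Specn{H}, \qquad \jd\longmapsto \bigvee\{\,\Ideal(x)\mid \Ideal(x)\in \jd\,\},
\]
in strict analogy with \eqref{eq:map}. The verification proceeds in three steps. First, I would check that $x\in h^*(\jd)$ iff $\Ideal(x)\in \jd$, exactly as in the proof of Theorem~\ref{t:specdual}, using Proposition~\ref{p:compideal} to reduce the join to a single principal ideal; this immediately yields injectivity. Second, I would show that $h^*(\jd)$ is a prime ideal of $H$: normality and convexity are automatic from being a join of ideals, while primeness follows because $x\wedge y\in h^*(\jd)$ forces $\Ideal(x\wedge y)\in \jd$, and by Lemma~\ref{l:reprcharcong} one has $\Ideal(x\wedge y)=\Ideal(x)\cap\Ideal(y)$ in $\Connp{H}$, so primeness of $\jd$ yields $\Ideal(x)\in \jd$ or $\Ideal(y)\in \jd$. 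Third, for surjectivity, I would associate to each $\p\in \Specn{H}$ the set $\jd_\p\coloneqq \{\,\Ideal(x)\mid x\in \p\,\}$, verify that it is a downward-closed subset of $\Connp{H}$ closed under finite joins, and establish its primeness once more via Lemma~\ref{l:reprcharcong}; then $h^*(\jd_\p)=\p$ by construction.

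For the topological part I would observe that $h^*[\widehat{\Ideal(x)}]=\Supp^*(x)$ for every $x\in H$, since by the established bijection $\Ideal(x)\notin\jd$ iff $x\notin h^*(\jd)$. Because the sets $\Supp^*(x)$ form a base of $\Specn{H}$ (by Proposition~\ref{p:openbase} together with the subspace topology) and $h^*$ preserves arbitrary unions and intersections, the map is a homeomorphism. The last sentence of the statement then follows from the classical fact, already invoked in the proof of Corollary~\ref{c:specspec}, that the Stone dual of any distributive lattice with minimum is a generalised spectral space.

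The main obstacle will be the two appeals to Lemma~\ref{l:reprcharcong}, i.e.\ to the identity $\Ideal(x\wedge y)=\Ideal(x)\cap \Ideal(y)$. Without representability only one inclusion of this identity holds by Proposition~\ref{p:princcong}.(2), and both the primeness of $h^*(\jd)$ and the primeness of $\jd_\p$ break down. This is the unique place where the hypothesis on $H$ is genuinely used, and it is precisely what guarantees that $\Connp{H}$ is not merely a join-subsemilattice but a distributive sublattice of $\Conn{H}$ to which Stone duality applies in the standard form.
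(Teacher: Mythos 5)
Your proposal is correct and follows essentially the same route as the paper, which defines the same map $\jd \mapsto \bigvee\{\, \Ideal(x) \mid \Ideal(x) \in \jd \,\}$ and simply states that the proof goes along the same lines as Theorem~\ref{t:specdual}, citing the same ingredients (Propositions~\ref{p:openbase}, \ref{p:princcong}, \ref{p:compideal} and Theorem~\ref{t:latticehomorep}). You have spelled out the details the paper leaves to the reader, correctly identifying Lemma~\ref{l:reprcharcong}---i.e.\ representability, via $\Ideal(x\wedge y)=\Ideal(x)\cap\Ideal(y)$---as the precise point where the two primeness verifications go through.
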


\begin{proof}
For any representable $\ell$-group $H$, set $D \coloneqq \Connp{H}$. Then, the map
\begin{align}\nonumber
X(D) & \xrightarrow{ \  \ }  \Conn{H} \\\nonumber
\jd & \xmapsto{ \hskip .35 cm}  \bigvee\{\, \Ideal(x) \mid \Ideal(x) \in \jd \,\}
\end{align}
restricts to a homeomorphism between $X(D)$ and $\Specn{H}$. The proof goes along the same lines as the one of Theorem~\ref{t:specdual}, and it is based on Proposition~\ref{p:openbase}, Proposition~\ref{p:princcong}, Theorem~\ref{t:latticehomorep}, and Proposition~\ref{p:compideal}.
\end{proof}

For any $\ell$-group $H$, we consider the function
\begin{align}
\Con{H} & \xrightarrow{ \ \nu \ }  \Con{H} \\\nonumber
\ksg & \xmapsto{ \hskip .35 cm}  \bigcap_{x \in H} \iv{x}\ksg x,
\end{align}
that maps any convex $\ell$-subgroup $\ksg$ of $H$ to the largest ideal contained in $\ksg$. (For a related use of this map, compare the characterisation of representable $\ell$-groups in~\cite[Theorem 2.4.4.(d)]{Steinberg2010}.) Recall that an endofunction $\iota\colon S\to S$ on a partially ordered set $S$ is an \emph{interior operator} if  it is contracting ($\iota(x) \leq x$), monotone ($x\leq y$ entails $\iota(x)\leq \iota(y)$), and idempotent ($\iota\circ\iota$ coincides with $\iota$ on~$S$). The fixed points of $\iota$ are called the \emph{open elements} of $S$. The map $\nu$ is an interior operator on $\Con{H}$, and  $\Conn{H}$ consists precisely of the open elements of $\nu$. 

\begin{lemma}\label{l:retraction}
For any $\ell$-group $H$, the map $\nu$ descends to an interior operator $\nu$ on $\Spec{H}$ such that $\Specn{H}$ consists precisely of the open elements of $\nu$ if, and only if, $H$ is representable. In this case, $\nu \colon \Spec{H} \to \Specn{H}$ is a continuous retraction.
\end{lemma}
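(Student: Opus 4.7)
The plan unfolds in four moves. First, observe that several ingredients are automatic. For any $\p \in \Con{H}$, the set $\nu(\p) = \bigcap_{x \in H} \iv{x}\p x$ is the largest ideal contained in $\p$; in particular, on $\Con{H}$ the map $\nu$ is contracting, monotone with respect to inclusion, and idempotent, and $\nu(\p) = \p$ holds precisely when $\p$ is normal. Consequently, as soon as $\nu$ restricts to an endomap of $\Spec{H}$, the restriction is automatically an interior operator whose open (=fixed) elements are exactly $\Specn{H}$; here I use Proposition~\ref{p:rootsys} to identify the specialisation order on $\Spec{H}$ with inclusion.

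Second, for the implication \emph{$H$ representable $\Rightarrow \nu$ descends to $\Spec{H}$}, fix $\p \in \Spec{H}$ and pass to the quotient $K \coloneqq H/\nu(\p)$, which is representable because representability is equational and $\nu(\p)$ is an ideal. The image $\bar{\p} \coloneqq \p/\nu(\p)$ is a prime subgroup of $K$; the ideal correspondence for quotients by an ideal, combined with the maximality of $\nu(\p)$ among ideals of $H$ contained in $\p$, shows that the largest ideal of $K$ contained in $\bar{\p}$ is trivial. A Zorn argument---hinging on the fact that the intersection of a descending chain of primes is again prime (one argues that if $a \wedge b$ lies in the intersection then at least one of the sets $\{\, i \mid a \in \q_i \,\}$, $\{\, i \mid b \in \q_i \,\}$ must exhaust the chain)---produces a minimal prime $\m \sst \bar{\p}$ of $K$. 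Proposition~\ref{p:representable} forces $\m$ to be an ideal, so $\m = \{\, \e \,\}$; hence $\{\, \e \,\}$ is prime in $K$, $K$ is totally ordered, and $\nu(\p)$ is a prime ideal of $H$.

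Third, the converse is a quick use of Proposition~\ref{p:representable}: assuming $\nu$ restricts to $\Spec{H}$, pick $\m \in \Min{H}$; then $\nu(\m)$ is a prime contained in $\m$, so $\nu(\m) = \m$ by minimality, and $\m$ is normal. Thus every minimal prime is an ideal, and $H$ is representable.

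Finally, for the continuous retraction assertion, $\nu$ lands in $\Specn{H}$ (its images are ideals) and is the identity on $\Specn{H}$ by the fixed-point characterisation. Continuity reduces, by Proposition~\ref{p:openbase}, to checking that
\[
\nu^{-1}(\Supp^*(a)) \;=\; \{\, \p \in \Spec{H} \mid a \not\in \nu(\p) \,\} \;=\; \bigcup_{x \in H} \Supp(x a \iv{x})
\]
is open in $\Spec{H}$, which is manifest. I expect the main obstacle to be the second move: transporting the ``largest-ideal-trivial'' property cleanly through the quotient and ensuring the existence of a minimal prime below $\bar{\p}$ in $K$.
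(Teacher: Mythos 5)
Your proposal is correct, and its treatment of the converse direction and of continuity coincides with the paper's: minimality forces $\nu(\m)=\m$ for $\m\in\Min{H}$, whence representability via Proposition~\ref{p:representable}; and $\iv{\nu}[\Supp^*{(a)}]=\bigcup_{x\in H}\Supp{(xa\iv{x})}$ gives continuity. Where you genuinely diverge is the key forward step, namely that representability forces $\nu(\p)\in\Specn{H}$ for every $\p\in\Spec{H}$. The paper argues directly: given $x\wedge y=\e$ with $x\not\in\nu(\p)$, pick $w$ with $\iv{w}xw\not\in\p$; normality of polars (Proposition~\ref{p:polnorm}) gives $\iv{w}xw\wedge\iv{z}yz=\e$ for all $z$, so primeness of $\p$ puts every conjugate of $y$ in $\p$, i.e.\ $y\in\nu(\p)$. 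This is a two-line orthogonality computation using the disjointness characterisation of primes. You instead pass to $K=H/\nu(\p)$, use the ideal correspondence to see that the largest ideal of $K$ below $\bar{\p}$ is trivial, extract by Zorn a minimal prime $\m\sst\bar{\p}$ (correctly noting that it is minimal in all of $\Spec{K}$), and invoke Proposition~\ref{p:representable} to force $\m=\{\,\e\,\}$, so that $K$ is totally ordered and $\nu(\p)$ is a prime ideal. Both routes rest on equivalent characterisations of representability; yours is structurally heavier (quotient correspondence, intersection of a chain of primes is prime, Zorn), but it has the merit of making transparent \emph{why} $\nu(\p)$ is prime---it is the kernel of the map onto the totally ordered quotient $H/\nu(\p)$---which is exactly the content of the remark following the lemma identifying $\nu(\p)$ with $\ker R_\p$. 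The paper's argument is shorter and avoids choice beyond what is already implicit elsewhere.
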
  

\begin{proof}
First, if $\nu$ is an interior operator onto $\Specn{H}$, since $\nu(\m) \sst \m$ for every $\m \in \Min{H}$, we can conclude $\nu(\m) = \m$ and hence, every minimal prime is an ideal. Thus, by Proposition~\ref{p:representable}, $H$ is representable. Conversely, suppose that $H$ is representable, and take $x, y \in H$ such that $x \land y = \e$. We show that either $x \in \nu(\p)$ or $y \in \nu(\p)$, for every $\p \in \Spec{H}$. If $x \not \in \nu(\p)$, there exists a $w \in H$ such that $\iv{w}xw \not \in \p$. Now, since $x \land y = \e$ and $H$ is representable, by Proposition~\ref{p:polnorm} also $\iv{w}xw \land \iv{z}yz = \e$ for every $z \in H$. Therefore, since $\p$ is prime, $\iv{z}yz \in \p$ for every $z \in H$, that is, $y \in \nu(\p)$. The fact that $\nu$ descends to an interior operator $\nu \colon \Spec{H} \to \Specn{H}$ is immediate. Also, $\nu(\p) = \p$ if, and only if, $\p \in \Specn{H}$. Finally, observe that 
\[
\iv{\nu}[\Supp^*{(x)}] = \bigcup_{y \in H} \Supp{(\iv{y}xy)}
\] 
and hence, $\nu$ is continuous.
\end{proof}

\begin{remark}
The map $\nu \colon \Spec{H} \to \Specn{H}$ sends a prime subgroup $\p$ to the kernel $\ker{R_\p}$ of the map $R_\p \colon H \to \Aut{H/\p}$ defined in \eqref{eq:Rrrc}.
\end{remark}

For any partially ordered group $G$ and any variety $\V$ of representable $\ell$-groups, consider 
\begin{align}
\Pord{G}{\V} & \xrightarrow{ \ \beta \ }  \Pord{G}{\V} \\\nonumber
C & \xmapsto{ \hskip .35 cm}  \bigcap_{t \in G} \iv{t}C t.
\end{align} 

\begin{theorem}\label{t:main_rep}
For any partially ordered group $G$ and any variety $\V$ of representable $\ell$-groups, the following hold.
\begin{enumerate}
\item The set $\Bord{G}{\V}$ with the subspace topology induced from $\Pord{G}{\V}$ is generalised spectral. 
\item The map $\beta$ is an interior operator on $\Pord{G}{\V}$ such that $\Bord{G}{\V}$ consists precisely of the open elements of $\beta$. Moreover, $\beta \colon \Pord{G}{\V} \to \Bord{G}{\V}$ is a continuous retraction.
\end{enumerate}
\end{theorem}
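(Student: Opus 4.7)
The plan is to recognise $\beta$ as the pullback of the continuous retraction $\nu \colon \Spec{\Free{G}{\V}} \to \Specn{\Free{G}{\V}}$ from Lemma~\ref{l:retraction} along the inclusion\nbd{-}preserving homeomorphism of Theorem~\ref{t:main}; Lemma~\ref{l:retraction} is applicable because $\V$ is a variety of representable $\ell$-groups, so $\Free{G}{\V}$ is itself representable. Part (1) will then be immediate: $\Bord{G}{\V}$ is homeomorphic to $\Specn{\Free{G}{\V}}$ by Theorem~\ref{t:main}, and the latter is generalised spectral by Corollary~\ref{c:specngenspec}.

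For part (2), the central identity I would establish is $\beta = \pi \circ \nu \circ \kappa$. Fix $C \in \Pord{G}{\V}$ and write $\p \coloneqq \kappa(C)$. First I would prove that $\nu(\p) = \ker h_C$: since $H_C$ acts transitively on $\Omega_C$ by Lemma~\ref{l:transitive}, the largest ideal inside the stabiliser $H_C[\e]$ consists of those $g \in H_C$ fixing every point of $\Omega_C$, which forces $g = \e_{H_C}$; a routine check shows $\nu$ commutes with pre\nbd{-}image along the surjective $\ell$-homomorphism $h_C$, so $\nu(\p) = \iv{h_C}(\{\e_{H_C}\}) = \ker h_C$. By Theorem~\ref{t:reptrans2} the $\ell$\nbd{-}group $H_C$ is totally ordered, so $\ker h_C$ genuinely lies in $\Specn{\Free{G}{\V}}$, as it should.

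Next I would verify $\pi(\ker h_C) = \beta(C)$. Unpacking the definitions, an element $a \in G$ belongs to $C_{\ker h_C}$ iff $\eta(a) \wedge \e \in \ker h_C$, iff $R_C(a) \wedge \e = \e$ in the totally ordered group $H_C$, iff $R_C(a) \geq \e$ in $H_C \subseteq \Aut{\Omega_C}$, iff $[ba] \geq_C [b]$ for every $b \in G$, iff $bab^{-1} \in C$ for every $b \in G$---which is exactly $a \in \beta(C)$.

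Combining both steps gives $\beta = \pi \circ \nu \circ \kappa$. From here all remaining assertions transfer along the homeomorphism of Theorem~\ref{t:main}: $\beta$ is continuous (composition of continuous maps), inclusion\nbd{-}preserving, contracting, and idempotent, hence an interior operator on $\Pord{G}{\V}$; its image lies in $\Bord{G}{\V}$ because $\nu$ takes values in $\Specn{\Free{G}{\V}}$; and the fixed points of $\beta$ correspond under $\kappa$ to those of $\nu$, namely $\Specn{\Free{G}{\V}}$, which Theorem~\ref{t:main} identifies with $\Bord{G}{\V}$. I expect the main obstacle to be the chain of equivalences establishing $\pi(\ker h_C) = \beta(C)$---which is where one really has to link the algebraic inequality $R_C(a) \geq \e$ in $H_C$ to the purely combinatorial condition that all conjugates $bab^{-1}$ lie in $C$---together with the preliminary check that $\nu$ commutes with pre\nbd{-}images along the quotient map $h_C$.
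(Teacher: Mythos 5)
Your proposal is correct and follows essentially the same route as the paper: both parts reduce to the identity $\beta = \pi \circ \nu \circ \kappa$, after which everything is transferred along the homeomorphism of Theorem~\ref{t:main} using Lemma~\ref{l:retraction} and Corollary~\ref{c:specngenspec}. The only difference is one of emphasis: you verify the identity $\beta(C)=\pi(\nu(\kappa(C)))$ in detail (via $\nu(\kappa(C))=\ker h_C$ and the chain of equivalences down to ``$bab^{-1}\in C$ for all $b$''), where the paper asserts it briefly and instead spends its effort on a direct check, via Theorem~\ref{t:reptrans2}, that $\beta(C)$ is a normal pre-cone---a fact your argument obtains for free from $\pi[\Specn{\Free{G}{\V}}]=\Bord{G}{\V}$.
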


\begin{proof}
(1) follows immediately from Theorem~\ref{t:main} and Corollary~\ref{c:specngenspec}.

For (2), observe that for a right pre-order $C$, the set $\beta(C)$ is clearly a normal submonoid of $G$. Moreover, if $a \not \in \beta(C)$, there is a conjugate $\iv{t}at \not\in C$ for $t \in G$; hence, by Theorem~\ref{t:reptrans2}, $\iv{s}as \in \iv{C}$ for each $s \in G$, that is, $\iv{a} \in \beta(C)$. Therefore, $\beta(C) \cup \iv{\beta(C)} = G$, and $\beta$ is a well-defined function onto $\Bord{G}{\V}$. Further, $\beta$ sends a right pre-order $C$ in $\Pord{G}{\V}$ to the largest pre-order contained in $C$, and $\beta(C) = \pi(\nu(\kappa(C)))$. Thus, an application of Theorem~\ref{t:main} and Lemma~\ref{l:retraction} completes the proof.
\end{proof}

\section*{Acknowledgements}
\noindent The research of the first-named author was supported by the Swiss National Science Foundation grant 200021\_165850.

\vskip1cm

\end{document}